\newcommand{\re}{\mathrm{Re}}
\newcommand{\eps}{\varepsilon}
\newcommand{\vphi}{\varphi}
\newcommand{\e}{\mathrm{e}}
\newcommand{\CC}{\mathbb{C}}
\newcommand{\RR}{\mathbb{R}}
\newcommand{\EE}{\mathbb{E}}
\newcommand{\E}{\mathbb{E}}
\newcommand{\NN}{\mathbb{N}}
\newcommand{\ZZ}{\mathbb{Z}}
\newcommand{\PP}{\mathbb{P}}
\newcommand{\cF}{\mathcal F}
\newcommand{\cL}{\mathcal L}
\newcommand{\cP}{\mathcal P}
\newcommand{\cB}{\mathcal B}
\renewcommand{\tilde}{\widetilde}
\newtheorem{theorem}{Theorem}[section]
\newtheorem{lemma}[theorem]{Lemma}
\newtheorem{proposition}[theorem]{Proposition}
\newtheorem{corollary}[theorem]{Corollary}
\theoremstyle{definition}
\theoremstyle{remark}
\begin{document}

\title{Poisson processes and a log-concave Bernstein theorem}
\author{Bo'az Klartag and Joseph Lehec}
\date{}
\maketitle

\begin{abstract}
We discuss interplays between log-concave functions and log-concave sequences.
We prove a Bernstein-type theorem, which characterizes the Laplace transform
of log-concave measures on the half-line in terms of log-concavity of the alternating
Taylor coefficients. We establish concavity inequalities for sequences
inspired by the Pr\'ekopa-Leindler and the Walkup theorems. One of our main tools
is a stochastic variational formula for the Poisson average.
\end{abstract}

\section{Introduction}

Let $\vphi \colon [0, \infty) \rightarrow \RR$ be a continuous function that is $C^{\infty}$-smooth on $(0, \infty)$. Its alternating Taylor coefficients are
\begin{equation} a_t(n) = (-1)^n \frac{\vphi^{(n)}(t)}{n!} \qquad \qquad \qquad (n \geq 0, t > 0). \label{eq_1411} \end{equation}
A function whose alternating  Taylor coefficients are non-negative is called a completely monotone function.
Bernstein's theorem  asserts that the alternating Taylor coefficients are non-negative if and only if there exists a finite, non-negative Borel measure $\mu$ on $[0, \infty)$ with
\begin{equation}  \vphi(t) = \int_0^{\infty} e^{-t x} d \mu(x) \qquad \qquad \qquad (t \geq 0). \label{eq_1027} \end{equation}
In other words, $\vphi$ is the Laplace transform of the measure $\mu$. See Widder \cite{widder} for proofs of Bernstein's theorem.
We say that the alternating Taylor coefficients are log-concave if
the sequence $(a_t(n))_{n \geq 0}$ is a {\it log-concave sequence} for any $t > 0$.
This means
that this sequence consists of non-negative numbers and for any $m,n \geq 0$
and $\lambda \in (0,1)$ such that $\lambda n + (1- \lambda) m$ is an integer,
\begin{equation} a_t(\lambda n + (1 - \lambda) m) \geq a_t(n)^{\lambda} a_t(m)^{1-\lambda}. \label{eq_926} \end{equation}
Equivalently, $a_t (n)^2 \geq a_t (n-1) a_t (n+1)$
for every $n \geq 1$, and
 the set of non-negative integers $n$
for which $a_t(n) >0$ is an interval of integers (either a finite interval of integers or an infinite one). 

\medskip A measure $\mu$ on $[0, \infty)$ is log-concave if it is either a delta measure at a certain point, or else an absolutely-continuous measure whose density $f:[0, \infty) \rightarrow \RR$
is a log-concave function. Recall that a function $f: K \rightarrow \RR$ for some convex set $K \subseteq \RR^n$ is log-concave if $f$ is non-negative and
$$ f(\lambda x + (1 - \lambda) y) \geq f(x)^{\lambda} f(y)^{1-\lambda} \qquad \qquad \qquad \text{for all} \ x,y \in K, 0 < \lambda < 1. $$

\begin{theorem}[``Log-concave Bernstein theorem''] Let $\vphi: [0, \infty) \rightarrow \RR$ be a continuous function that is $C^{\infty}$-smooth on $(0, \infty)$.
Then the alternating Taylor coefficients of $\vphi$ are log-concave if and only if $\vphi$ takes the form (\ref{eq_1027}) for a certain finite, log-concave measure $\mu$.
\label{thm_1030}
\end{theorem}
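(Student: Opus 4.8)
The plan is to prove the two implications separately. The ``if'' direction is an inequality for the moments of a log-concave function on the half-line; the ``only if'' direction recovers $\mu$ as a weak limit of log-concave measures assembled from the sequences $(a_t(n))_n$.

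\textbf{The ``if'' direction.} Suppose $\vphi(t)=\int_0^\infty e^{-tx}\,d\mu(x)$ with $\mu$ finite and log-concave. Differentiating under the integral sign gives $a_t(n)=\frac1{n!}\int_0^\infty x^n e^{-tx}\,d\mu(x)\ge 0$. For a fixed $t>0$ the measure $d\nu(x)=e^{-tx}\,d\mu(x)$ is again finite, log-concave, and has finite moments $M_k=\int_0^\infty x^k\,d\nu$, with $a_t(n)=M_n/n!$; so it suffices to prove that $n\mapsto M_n/n!$ is a log-concave sequence for every such $\nu$. If $\nu$ is a point mass the sequence is log-concave by inspection; otherwise $M_n>0$ for all $n$, so the support condition holds and one must verify $\frac{n+1}{n}M_n^2\ge M_{n-1}M_{n+1}$. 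Writing $h$ for the (log-concave) density of $\nu$ and letting $X$ have density proportional to $x^{n-1}h(x)$, which is itself log-concave, one has $M_{n-1}M_{n+1}-M_n^2=M_{n-1}^2\,\mathrm{Var}(X)$, so the inequality reduces to $\mathrm{Var}(X)\le(\EE X)^2/n$. With $\ell=(\log h)'$ non-increasing and $\frac{d}{dx}\log\!\big(x^{n-1}h(x)\big)=\frac{n-1}{x}+\ell(x)$, integrating by parts against $x$ and against $x^2$ (the boundary terms vanish) gives $\EE[X\ell(X)]=-n$ and $\EE[X^2\ell(X)]=-(n+1)\,\EE X$; by these two identities the bound $n\,\EE X^2\le(n+1)(\EE X)^2$ is equivalent to $\EE[X^2]\,\EE[X\ell(X)]\ge\EE[X]\,\EE[X^2\ell(X)]$, and introducing an independent copy $X'$ the left side minus the right side equals $\tfrac12\,\EE\!\big[XX'(X-X')(\ell(X')-\ell(X))\big]\ge 0$, since $\ell$ is non-increasing. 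A standard mollification reduces a general log-concave density to the smooth positive case. (Alternatively, this implication should follow from the Pr\'ekopa--Leindler-type inequality for sequences established earlier.)

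\textbf{The ``only if'' direction.} Assume $(a_t(n))_{n\ge0}$ is log-concave for every $t>0$. In particular $a_t(n)\ge 0$, so $\vphi$ is completely monotone and the classical Bernstein theorem gives $\vphi(t)=\int_0^\infty e^{-tx}\,d\mu(x)$ for a finite non-negative $\mu$; the task is to show $\mu$ is log-concave. Set $H_t(n):=t^{n+1}a_t(n)=\int_0^\infty\gamma_{n+1,t}(x)\,d\mu(x)$, where $\gamma_{n+1,t}(x)=\frac{t^{n+1}x^n e^{-tx}}{n!}$ is the $\mathrm{Gamma}(n+1,t)$ density; multiplying a sequence by the geometric factor $t^{n+1}$ does not affect log-concavity, so $(H_t(n))_{n\ge0}$ is log-concave. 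A log-concave sequence is the restriction to the integers of a log-concave function on the line (interpolate the logarithm piecewise-linearly), so, rescaling, there is a log-concave function $g_t$ on $(0,\infty)$ with $g_t\!\big((n+1)/t\big)=H_t(n)$ for every $n\ge0$ --- note $(n+1)/t$ is the mean of $\gamma_{n+1,t}$. Now consider the log-concave measures $d\sigma_t(x)=g_t(x)\,dx$. From $\sum_{n\ge0}\gamma_{n+1,t}\equiv t$ on $(0,\infty)$ one checks $\sigma_t((0,\infty))\to\mu([0,\infty))$, and since $\gamma_{n+1,t}$ concentrates at the point $(n+1)/t$ as $t\to\infty$ (the law of large numbers for the Poisson/Gamma family), for $\phi\in C_b$ one gets $\int\phi\,d\sigma_t\to\int\phi\,d\mu$; hence $\sigma_t\to\mu$ weakly. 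As the class of log-concave measures on $\RR$ is closed under weak convergence, $\mu$ is log-concave. (This route subsumes the Post--Widder inversion formula and simultaneously takes care of atoms, singular parts, and the requirement that the support be an interval.)

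\textbf{Where the difficulty lies.} The parts requiring genuine care are all in the second direction: checking that the piecewise-log-affine interpolant $g_t$ and the measures $\sigma_t$ really converge to $\mu$ (comparing the discrete profile $\tfrac1t\sum_n H_t(n)\,\delta_{(n+1)/t}$ with the continuous $\sigma_t$, and tracking the total mass), and invoking --- or reproving --- the weak closedness of the class of log-concave measures on the line, which though classical is where the structural content sits. The first direction, by contrast, is essentially the single covariance inequality above once the integration-by-parts identities are set up. I expect the limiting/weak-closedness step to be the main obstacle to a fully clean treatment.
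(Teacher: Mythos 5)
Your overall strategy coincides with the paper's on both halves. For the ``if'' direction the paper simply reduces to the measure $e^{-tx}d\mu(x)$ and quotes the Berwald--Borell inequality; your covariance/symmetrization argument (the identities $\EE[X\ell(X)]=-n$, $\EE[X^2\ell(X)]=-(n+1)\EE X$ and the positivity of $\tfrac12\EE[XX'(X-X')(\ell(X')-\ell(X))]$) is a correct self-contained proof of that inequality, close in spirit to the one in \cite{NO}, and checks out. For the ``only if'' direction the paper follows exactly your route: Bernstein's theorem, the discrete profile $t^n a_t(n)$ at the points $n/t$, a log-linear (geometric) interpolation producing log-concave densities $g_t$, convergence of the resulting measures to $\mu$ via the Poisson law of large numbers, and finally the weak closedness of log-concavity --- which the paper does not take as a black box but re-derives by applying Pr\'ekopa--Leindler to $\mu_t$, passing to the limit over intervals whose boundaries are not atoms of $\mu$, and invoking a lemma of Borell. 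Citing that closedness as classical is defensible, but it is precisely the step the paper spells out.

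There is, however, one genuine gap: a possible atom of $\mu$ at the origin. Your claim that the construction ``simultaneously takes care of atoms'' fails at $x=0$. The comparison between $\sigma_t=g_t\,dx$ and the discrete profile $\tfrac1t\sum_n H_t(n)\,\delta_{(n+1)/t}$ is a Riemann-sum estimate whose error is of order $\tfrac1t\sup_x g_t(x)=\sup_{n} t^n a_t(n)\ \geq\ a_t(0)\longrightarrow \mu(\{0\})$, so it does not vanish when $\mu$ charges the origin; concretely, for $\mu=\delta_0$ one has $H_t(0)=t$ and $H_t(n)=0$ for $n\geq1$, the geometric interpolant $g_t$ vanishes almost everywhere, and $\sigma_t\to 0\neq\delta_0$. (Interior atoms are harmless because $\sup_n (tx_0)^n e^{-tx_0}/n!\to0$ for $x_0>0$; only $x_0=0$ breaks the bound.) You must therefore first dispose of the case $\mu=c\delta_0$ separately and then prove $\mu(\{0\})=0$ whenever $\mu((0,\infty))>0$, which is where the hypothesis is genuinely used: the paper combines $a_t(1)^2/a_t(2)\geq a_t(0)\to\mu(\{0\})$ (log-concavity at $n=0,1,2$) with the Cauchy--Schwarz bound $\left(\int x e^{-tx}d\mu\right)^2\big/\int x^2e^{-tx}d\mu\leq\int_{(0,\infty)}e^{-tx}d\mu\to0$ to conclude $\mu(\{0\})=0$. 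This short step must be added to your outline before the convergence $\sigma_t\to\mu$ can be justified.
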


There are several known results about Laplace transforms of {\it log-convex} probability measures, rather than log-concave. In fact, Theorem \ref{thm_1030} is the log-concave analog of Hirsch's theorem, which analyzes the case where the alternating Taylor coefficients of $\vphi$ are log-convex and non-increasing. Hirsch's theorem states that this happens if and only if 
$\vphi$ takes the form (\ref{eq_1027}) for a measure $\mu$ whose density 
is non-increasing and log-convex, apart from an atom at the origin. 
See Hirsch \cite{hirsch} and
Schilling, Song and Vondra\v cek \cite[Section 11.2]{ssv}
for a precise formulation and a proof of Hirsch's theorem, and also Forst \cite{forst} and 
Sendov and Shan \cite{hristo} for related results.

\medskip In Section \ref{sec2} we prove Theorem \ref{thm_1030} by using an inversion formula for the Laplace transform
as well as the Berwald-Borell inequality \cite{berwald, Borell}. The latter inequality states that
if one divides the Mellin transform of a log-concave measure
on $[0, \infty)$ by the Gamma function, then a log-concave function is obtained. It directly implies the ``if'' part of 
Theorem \ref{thm_1030}.  In the proof of Hirsch's theorem from \cite{ssv}, the 
r\^ole of the Berwald-Borell inequality is replaced by the simpler Cauchy-Schwartz 
inequality. Theorem \ref{thm_1030} admits the following corollary:

\begin{corollary} Let $\mu$ be a finite, non-negative Borel measure on $[0, \infty)$
and let $\vphi$ be given by (\ref{eq_1027}). Then $\mu$ is log-concave
if and only if the function $\left| \vphi^{(n-1)}(t) \right|^{-1/n}$ is convex
in $t \in (0, \infty)$ for every $n \geq 1$. \label{cor_1535}
\end{corollary}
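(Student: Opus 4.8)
The plan is to use Theorem~\ref{thm_1030} to reduce the assertion to the two-term log-concavity inequalities $a_t(n)^2 \ge a_t(n-1)\, a_t(n+1)$ for the alternating Taylor coefficients, and then, for each fixed $n \ge 1$, to identify the validity of ``$a_t(n)^2 \ge a_t(n-1)\, a_t(n+1)$ for all $t > 0$'' with the convexity of $t \mapsto |\vphi^{(n-1)}(t)|^{-1/n}$. As a preliminary step I would record the elementary consequences of (\ref{eq_1027}): differentiating under the integral sign (legitimate for $t$ in any compact subinterval of $(0,\infty)$, as $\mu$ is finite and $x \mapsto x^n e^{-tx}$ is bounded there) gives
\begin{equation}
 (-1)^n \vphi^{(n)}(t) \;=\; \int_0^\infty x^n e^{-tx}\, d\mu(x) \;=\; n!\, a_t(n) \;\ge\; 0 \qquad (n \ge 0,\ t > 0),
 \label{eq_cor_deriv}
\end{equation}
so $|\vphi^{(n-1)}(t)| = (n-1)!\, a_t(n-1)$; as $(n-1)!$ is a positive constant, $t \mapsto |\vphi^{(n-1)}(t)|^{-1/n}$ is convex on $(0,\infty)$ if and only if $t \mapsto a_t(n-1)^{-1/n}$ is.

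The main computation comes next. Suppose first that $\mu$ is not a multiple of $\delta_0$, so that $\mu$ puts positive mass on $(0,\infty)$; since the integrand in (\ref{eq_cor_deriv}) is then strictly positive and bounded there, $a_t(m) \in (0,\infty)$ for all $m \ge 0$ and $t > 0$. Fix $n \ge 1$ and set $h(t) := a_t(n-1) > 0$. Differentiating under the integral as in (\ref{eq_cor_deriv}) yields $h'(t) = -n\, a_t(n)$ and $h''(t) = n(n+1)\, a_t(n+1)$, and a routine calculation then gives
\begin{equation}
 \bigl( h^{-1/n} \bigr)''(t) \;=\; (n+1)\, h(t)^{-\frac1n - 2}\,\bigl[\, a_t(n)^2 - a_t(n-1)\, a_t(n+1) \,\bigr].
 \label{eq_cor_second}
\end{equation}
Since $h > 0$, this shows that $t \mapsto a_t(n-1)^{-1/n}$ is convex on $(0,\infty)$ if and only if $a_t(n)^2 \ge a_t(n-1)\, a_t(n+1)$ for every $t > 0$.

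To conclude I would invoke Theorem~\ref{thm_1030}. By the uniqueness theorem for Laplace transforms, $\mu$ is log-concave if and only if $\vphi$ has the form (\ref{eq_1027}) for some finite log-concave measure, and by Theorem~\ref{thm_1030} this in turn holds if and only if the sequence $(a_t(n))_{n \ge 0}$ is log-concave for every $t > 0$. In the present case $a_t(n) > 0$ for all $n$ and $t$, so the requirement in the definition of a log-concave sequence that $\{n : a_t(n) > 0\}$ be an interval of integers is automatically met, and therefore log-concavity of $(a_t(n))_{n \ge 0}$ amounts precisely to $a_t(n)^2 \ge a_t(n-1)\, a_t(n+1)$ for all $n \ge 1$. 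Together with the equivalence coming from (\ref{eq_cor_second}), this settles the corollary when $\mu$ is not a multiple of $\delta_0$. The remaining case $\mu = c\, \delta_0$ with $c \ge 0$ is checked directly: then $\mu$ is log-concave and $\vphi \equiv c$, so $|\vphi^{(0)}|^{-1} \equiv c^{-1}$ is (trivially) convex, while $\vphi^{(n-1)} \equiv 0$ for $n \ge 2$, in agreement with the statement under the convention $0^{-1/n} = +\infty$.

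I expect the analytic heart of the proof — the identity (\ref{eq_cor_second}), which converts the convexity of $|\vphi^{(n-1)}|^{-1/n}$ exactly into a two-term log-concavity inequality for $(a_t(n))_n$ — to be short and purely mechanical. The only point calling for genuine care is the bookkeeping in the last paragraph: aligning the ``interval of integers'' clause in the definition of a log-concave sequence with the positivity properties of the coefficients $a_t(n)$, and dealing with the degenerate measures $\mu = c\, \delta_0$.
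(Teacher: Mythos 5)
Your proposal is correct and follows essentially the same route as the paper: reduce via Theorem~\ref{thm_1030} to the two-term inequalities $a_t(n)^2 \ge a_t(n-1)a_t(n+1)$ (positivity of the coefficients making the interval condition automatic), compute the second derivative of $|\vphi^{(n-1)}|^{-1/n}$ to see that its sign is exactly that of $a_t(n)^2 - a_t(n-1)a_t(n+1)$, and treat $\mu = c\delta_0$ separately. Your identity (up to the harmless constant factor $((n-1)!)^{-1/n}$) matches the paper's formula $\frac{d^2}{dt^2}|\vphi^{(n-1)}(t)|^{-1/n} = \frac{n+1}{n^2}|\vphi^{(n-1)}(t)|^{-(2n+1)/n}b_n(t)$.
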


In fact, in Theorem \ref{thm_1030}
it suffices to verify that the sequence $( a_t(n) )_{n \geq 0}$ is log-concave for a sufficiently large $t$, as follows
from the following:

\begin{proposition} Let $\vphi: (0, \infty) \rightarrow \RR$ be real-analytic, and define $a_t(n)$ via (\ref{eq_1411}). Assume that $0 < r < s$ and that the sequence $( a_{s}(n) )_{n \geq 0}$ is log-concave.
Then  the sequence $( a_{r}(n) )_{n \geq 0}$ is also log-concave. \label{prop_1414}
\end{proposition}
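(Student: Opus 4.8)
The plan is to realize the passage from $a_s$ to $a_r$ as the action of an explicit one‑parameter semigroup on sequences, and to show that this semigroup preserves log‑concavity. On real sequences $v=(v_n)_{n\ge0}$ define the operator $A$ by $(Av)_n=(n+1)v_{n+1}$, so that $(A^jv)_n=\tfrac{(n+j)!}{n!}v_{n+j}$. If $\rho_{t_0}>0$ is the radius of convergence of the Taylor series of $\vphi$ at $t_0\in(0,\infty)$, then for $0<h<\rho_{t_0}$ Taylor's theorem gives the absolutely convergent identity
\[
a_{t_0-h}(n)=\sum_{j\ge0}\binom{n+j}{j}h^{\,j}a_{t_0}(n+j)=\sum_{j\ge0}\frac{h^{\,j}}{j!}(A^jv)_n=(e^{hA}v)_n,\qquad v:=\big(a_{t_0}(m)\big)_{m\ge0}.
\]
Hence the proposition follows once we show that $e^{hA}$ maps log‑concave sequences to log‑concave sequences, together with a short propagation argument from $s$ down to $r$ (this last step is needed because the Taylor series at $s$ may fail to converge all the way down to $r$).

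The heart of the matter is the claim that \emph{if $v$ is a log‑concave sequence and $\eps\ge0$, then $v+\eps Av$ is again log‑concave.} First, $\big((n+1)v_{n+1}\big)_n$ is itself log‑concave, since $(n+1)^2\ge n(n+2)$. Next, set $w=v+\eps Av$ and expand the ``log‑concavity defect'' $w_n^2-w_{n-1}w_{n+1}$ as a quadratic in $\eps$: its constant term is $v_n^2-v_{n-1}v_{n+1}\ge0$; its $\eps^2$‑coefficient is $(n+1)^2v_{n+1}^2-n(n+2)v_nv_{n+2}$, which is $\ge v_nv_{n+2}\ge0$ because $v_{n+1}^2\ge v_nv_{n+2}$; and its $\eps^1$‑coefficient simplifies to $(n+2)\big(v_nv_{n+1}-v_{n-1}v_{n+2}\big)\ge0$, the inequality $v_nv_{n+1}\ge v_{n-1}v_{n+2}$ following by multiplying $v_n^2\ge v_{n-1}v_{n+1}$ and $v_{n+1}^2\ge v_nv_{n+2}$ (the degenerate cases in which some $v_i$ vanishes are handled directly, using that the support of $v$ is an interval of integers). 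A quadratic with non‑negative coefficients is non‑negative on $[0,\infty)$, so $w_n^2\ge w_{n-1}w_{n+1}$ for all $n$; and the support of $w$ is the support of $v$ enlarged by at most one step to the left, hence still an interval. This proves the claim.

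Granting it, fix $t_0$ with $\big(a_{t_0}(n)\big)_n$ log‑concave and $0<h<\rho_{t_0}$. Iterating the claim, each $\big(I+\tfrac hN A\big)^Nv$ is log‑concave; its $n$‑th entry is $\sum_{j=0}^N\binom Nj(h/N)^j(A^jv)_n$, which is dominated term‑by‑term by the summable sequence $j\mapsto\tfrac{h^j}{j!}(A^jv)_n$ (since $\binom Nj(h/N)^j\le h^j/j!$), so dominated convergence yields $\big(I+\tfrac hN A\big)^Nv\to e^{hA}v=\big(a_{t_0-h}(n)\big)_n$ coordinatewise as $N\to\infty$. Therefore $\big(a_{t_0-h}(n)\big)_n$ inherits all the inequalities $a_{t_0-h}(n)^2\ge a_{t_0-h}(n-1)a_{t_0-h}(n+1)$; and the explicit series shows $a_{t_0-h}(n)>0$ precisely for $n\le\sup\,\mathrm{supp}(v)$ and $=0$ otherwise, so the support is again an interval, and $\big(a_{t_0-h}(n)\big)_n$ is log‑concave. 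Finally, $\vphi$ extends holomorphically to an open set $U\supseteq(0,\infty)$ in $\CC$; put $\rho_0=\mathrm{dist}\big([r,s],\CC\setminus U\big)>0$, so the radius of convergence at every point of $[r,s]$ is at least $\rho_0$. Starting at $t_0=s$ and applying the previous step with steps of length less than $\rho_0$ (say $\rho_0/2$, shortening the last step to land on $r$), after finitely many steps we conclude that $\big(a_r(n)\big)_n$ is log‑concave.

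The step I expect to be the real obstacle is precisely the claim that $v\mapsto v+\eps Av$ preserves log‑concavity \emph{exactly}, and not merely to first order in $\eps$: this hinges on the somewhat lucky fact that all three coefficients of the resulting quadratic in $\eps$ are non‑negative. The remaining ingredients — the semigroup/limiting argument, the check that no interior zeros are created, and the connectedness argument when the radius of convergence at $s$ is small — are routine, although the bookkeeping around supports deserves some care.
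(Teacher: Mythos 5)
Your proof is correct, and it takes a genuinely different route from the paper. The paper propagates log-concavity backwards in $t$ by working with the log-concavity measurements $c_{k,\ell,m,n}(t)=a_t(\ell)a_t(m)-a_t(k)a_t(n)$: it first shows that the derivative of each measurement is a non-positive combination of other measurements, deduces that $(-1)^jc^{(j)}(t)\geq 0$ for all $j$ wherever $(a_t(n))_n$ is log-concave, and then uses real-analyticity plus a connectedness (infimum/contradiction) argument to push non-negativity down to $r$; a second proof reduces the statement to the Walkup-type Proposition \ref{prop_1710}, established there by a probabilistic order-statistics argument. You instead solve the ODE $\frac{d}{dt}a_t=-Aa_t$, with $(Av)_n=(n+1)v_{n+1}$, backwards as $a_{t_0-h}=e^{hA}a_{t_0}$, and reduce everything to the exact algebraic fact that a single Euler step $I+\eps A$ preserves log-concavity. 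Your quadratic-in-$\eps$ computation checks out: the $\eps^1$-coefficient is $(n+2)(v_nv_{n+1}-v_{n-1}v_{n+2})\geq 0$ and the $\eps^2$-coefficient is at least $v_nv_{n+2}\geq 0$, with the degenerate cases handled by interval support as you indicate; the domination $\binom{N}{j}(h/N)^j\leq h^j/j!$ and the separate verification that the limit has interval support are both sound, and your uniform-radius propagation over $[r,s]$ is a legitimate variant of the paper's connectedness step. What each approach buys: your Euler-step lemma amounts to an elementary, purely algebraic proof of (the convergent case of) Proposition \ref{prop_1710}, for which the paper only offers the order-statistics argument, and it isolates a clean standalone fact about the operator $I+\eps A$; the paper's route through the measurements, on the other hand, yields the stronger conclusion of Theorem \ref{cor_1013} (complete monotonicity of the measurements) as a byproduct, which your argument does not recover.
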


Proposition \ref{prop_1414} is proven in Section \ref{sec3}, alongside concavity
inequalities related to log-concave sequences in  the spirit of the Walkup theorem \cite{Walkup}.
While searching for a Pr\'ekopa-Leindler type inequality for sequences, we  found  the
following:

\begin{theorem}
Let $f,g,h,k: \ZZ \rightarrow \RR$ satisfy
\[
f(x) + g(y) \leq h \left( \left \lfloor \frac{x+y}2 \right \rfloor \right)
+ k \left( \left \lceil \frac{x+y}2 \right \rceil \right) , \quad \forall x,y \in \mathbb Z,
\]
where $\lfloor x \rfloor$ is the lower integer part of $x \in \RR$
 and $\lceil x \rceil$ is the upper integer part. 
Then
\begin{equation}
\left( \sum_{x\in \ZZ} \e^{f(x)} \right)
\, \left( \sum_{x\in \ZZ} \e^{g(x)} \right)
\leq
\left( \sum_{x\in \ZZ} \e^{h(x)} \right)
\, \left( \sum_{x\in \ZZ} \e^{k(x)} \right).
\label{eq_558} \end{equation} \label{thm_603}
\end{theorem}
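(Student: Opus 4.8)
The plan is to read the desired inequality as a statement about the log-partition functional $\Lambda(\psi):=\log\sum_{x\in\ZZ}\e^{\psi(x)}$, namely $\Lambda(f)+\Lambda(g)\le\Lambda(h)+\Lambda(k)$, and to prove it by a transportation-plus-entropy argument. After a routine truncation one reduces to the case where $f$ and $g$ are finitely supported (and, since the claim is trivial otherwise, where $\Lambda(h),\Lambda(k)<\infty$); note that shrinking the supports of $f,g$ to intervals keeps $\lfloor (x+y)/2\rfloor$ and $\lceil (x+y)/2\rceil$ in a bounded set, so the hypothesis is undisturbed. The tool is the Gibbs variational principle: for finitely supported $\psi$,
\[
\Lambda(\psi)=\max_{\nu}\Big(\sum_x\psi(x)\nu(x)+\mathrm{Ent}(\nu)\Big),\qquad \mathrm{Ent}(\nu):=-\sum_x\nu(x)\log\nu(x),
\]
the maximum over probability measures $\nu$ on $\ZZ$ being attained at the Gibbs measure $\nu_\psi(x)=\e^{\psi(x)}/\sum_y\e^{\psi(y)}$.

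Concretely, let $\nu_f,\nu_g$ be the Gibbs measures of $f,g$ and let $(X,Y)$ be their monotone (quantile) coupling: $X=F^{-1}(\mathcal U)$, $Y=G^{-1}(\mathcal U)$ with $\mathcal U$ uniform on $(0,1)$ and $F,G$ the cumulative distribution functions. Set $A:=\lfloor (X+Y)/2\rfloor$, $B:=\lceil (X+Y)/2\rceil$, and let $\nu_h,\nu_k$ be their laws. By the variational principle, $\Lambda(h)+\Lambda(k)\ge\big(\E[h(A)]+\E[k(B)]\big)+\big(\mathrm{Ent}(\nu_h)+\mathrm{Ent}(\nu_k)\big)$. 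Applying the hypothesis pointwise with $x=X,y=Y$ gives $h(A)+k(B)\ge f(X)+g(Y)$ almost surely, hence $\E[h(A)]+\E[k(B)]\ge\sum_x f(x)\nu_f(x)+\sum_y g(y)\nu_g(y)$. Since $\nu_f,\nu_g$ are the maximizers for $\Lambda(f),\Lambda(g)$, the theorem follows once one proves the entropy inequality
\[
\mathrm{Ent}(\nu_h)+\mathrm{Ent}(\nu_k)\ \ge\ \mathrm{Ent}(\nu_f)+\mathrm{Ent}(\nu_g).
\]

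This entropy inequality — equivalently $\mathrm{Ent}\big(\lfloor (X+Y)/2\rfloor\big)+\mathrm{Ent}\big(\lceil (X+Y)/2\rceil\big)\ge\mathrm{Ent}(X)+\mathrm{Ent}(Y)$ for the monotone coupling — is the main obstacle, and the place where comonotonicity is essential: for independent $X,Y$ it is false (there subadditivity of entropy points the wrong way). I would prove it by viewing $X,Y,X+Y,A,B$ as non-decreasing functions of $\mathcal U$, so each induces a partition of $(0,1)$ into consecutive intervals. Write $\mathcal P$ for the (finest) partition induced by $X+Y$. Monotonicity forces $X$ and $Y$ to be functions of $X+Y$, so the partitions $\mathcal P_X,\mathcal P_Y$ are coarsenings of $\mathcal P$; and $\mathcal P_A,\mathcal P_B$ are coarsenings of $\mathcal P$ got by merging consecutive $\mathcal P$-blocks in pairs. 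Using $\mathrm{Ent}(\mathcal Q)=\mathrm{Ent}(\mathcal P)-\mathrm{Ent}(\mathcal P\mid\mathcal Q)$ for any coarsening $\mathcal Q$ of $\mathcal P$ (the chain rule), the claim becomes $\mathrm{Ent}(\mathcal P\mid\mathcal P_A)+\mathrm{Ent}(\mathcal P\mid\mathcal P_B)\le\mathrm{Ent}(\mathcal P\mid\mathcal P_X)+\mathrm{Ent}(\mathcal P\mid\mathcal P_Y)$. The left side is exactly $\sum g(\pi_j,\pi_{j+1})$ over adjacent $\mathcal P$-blocks whose $(X+Y)$-values are consecutive integers, where $g(a,b):=(a+b)\log(a+b)-a\log a-b\log b$ is the cost of merging masses $a,b$.

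For each such adjacent pair, exactly one of $X,Y$ is constant across the breakpoint, so the pair lies inside a single block of $\mathcal P_X$ or of $\mathcal P_Y$; I would charge the cost $g(\pi_j,\pi_{j+1})$ to that block. The remaining ingredient is the elementary fact that the entropy of a run of masses $\pi_1,\dots,\pi_n$ dominates $\sum_{j}g(\pi_j,\pi_{j+1})$ — obtained by merging the run left to right, noting the $j$-th merge reduces entropy by $g(\pi_1+\dots+\pi_j,\pi_{j+1})\ge g(\pi_j,\pi_{j+1})$ because $g$ is increasing in each argument — which shows that the costs charged to a given $\mathcal P_X$- or $\mathcal P_Y$-block are bounded by its conditional entropy, and summing finishes the proof. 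I expect the fiddliest points to be the bookkeeping of which coordinate changes at each breakpoint (hence which block absorbs each merge cost), the verification that $\mathcal P_A,\mathcal P_B$ merge $\mathcal P$-blocks only in pairs (true since $\lfloor\cdot/2\rfloor$ and $\lceil\cdot/2\rceil$ are two-to-one), and the measure-theoretic care needed when the supports of $\nu_f,\nu_g$ are not intervals.
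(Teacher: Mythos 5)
Your argument is correct, and it is a genuinely different — and more elementary — route than the one in the paper. The paper proves a Borell-type stochastic variational formula for the Poisson semigroup, deduces a Pr\'ekopa--Leindler inequality for the Poisson law $\pi_T$ by coupling two counting processes $X^\alpha,X^\beta$ driven by the same Poisson point process (so that $\lfloor(X^\alpha+X^\beta)/2\rfloor$ and $\lceil\cdot\rceil$ are again counting processes whose intensities \emph{exactly} conserve the cost $\vphi(\alpha_t)+\vphi(\beta_t)=\vphi(\lambda_t)+\vphi(\mu_t)$ with $\vphi(x)=x\log x-x+1$), and then passes to the counting measure on $\ZZ$ via the $T\to\infty$ central-limit rescaling $\sqrt{2\pi T}\,\pi_T(\cdot+T)\to 1$. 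You instead work statically on $\ZZ$: the finite-dimensional Gibbs variational principle replaces the pathspace formula, the monotone coupling of the two Gibbs measures replaces the coupling of counting processes, and the exact conservation of cost is replaced by the one-sided entropy inequality $\mathrm{Ent}(\lfloor(X+Y)/2\rfloor)+\mathrm{Ent}(\lceil(X+Y)/2\rceil)\ge\mathrm{Ent}(X)+\mathrm{Ent}(Y)$, which you prove by the chain rule plus a merge-cost bookkeeping over the common refinement induced by $X+Y$. All the steps check out: on a level set of $X+Y$ both $X$ and $Y$ are constant (comonotonicity), so $\mathcal P$ refines all four partitions; each of $\mathcal P_A,\mathcal P_B$ merges $\mathcal P$-blocks only in consecutive-integer pairs, and each such pair is merged by exactly one of them; across the breakpoint of such a pair the jumps of $X$ and $Y$ are non-negative integers summing to $1$, so exactly one of them is $0$, which legitimizes the charging scheme; and the run-entropy bound via left-to-right merging with $g$ increasing in each argument is sound. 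Your approach buys exactly the ``direct proof'' that the paper says it would be interesting to find: no stochastic calculus, no fixed-point lemma for the intensity equation, no limiting procedure. What it gives up is the Borell--Poisson formula itself (which the paper presents as being of independent interest) and the intermediate Poisson-measure version of the inequality. The loose ends you flag (extended-real truncation, non-interval supports, finiteness of the various expectations) are genuinely routine — note in particular that truncating $f,g$ to $-\infty$ off intervals makes the Gibbs supports intervals, though $X+Y$ may still skip values, which your argument already tolerates.
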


Our proof of Theorem \ref{thm_603} is presented in Section \ref{sec5} and it involves probabilistic techniques. It would be interesting to find a  direct proof. However, we believe that the probabilistic method is not without importance in itself,
and perhaps it is mathematically deeper than other components of this paper. The argument is based on a stochastic variational formula for the
expectation
of a given function with respect to the Poisson distribution. It is analogous to Borell's formula
from \cite{Borell3} which is concerned with the Gaussian distribution. The stochastic variational formula is
discussed in Section \ref{sec4}.

\medskip The Berwald-Borell inequality (or Theorem \ref{thm_1030}) implies that when $\mu$ is a finite, log-concave measure on $[0, \infty)$
and $k \leq \ell \leq m \leq n$  are non-negative integers with $k + n = \ell + m$,
\begin{equation}
a_t(\ell) a_t(m) - a_t(k) a_t(n) \geq 0,
\label{eq_940}
\end{equation}
where $a_{t}(n) = \int_0^{\infty} (x^n / n!) e^{-tx} d \mu(x)$
is defined via (\ref{eq_1411}) and (\ref{eq_1027}). The following
theorem shows that the left-hand side of (\ref{eq_940}) is not only non-negative, but
it is in fact a completely-monotone function of $t$:

\begin{theorem} Let $\mu$ be a finite, log-concave measure on $[0, \infty)$. Then for any non-negative integers
$k \leq \ell \leq m \leq n$  with $k + n = \ell + m$ there exists a finite, non-negative measure $\nu = \nu_{k, \ell, m, n}$ on $[0, \infty)$,
such that for any $t > 0$,
\begin{equation}  \int_0^{\infty} e^{-tx} d \nu(x) = a_t(\ell) a_t(m) - a_t(k) a_t(n), \label{eq_305}
\end{equation}
where as usual $a_{t}(n) = \int_0^{\infty} (x^n / n!) e^{-tx} d \mu(x)$
is defined via (\ref{eq_1411}) and (\ref{eq_1027}).
\label{cor_1013}
\end{theorem}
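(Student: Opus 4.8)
The plan is to realize $\nu$ explicitly as the pushforward of a signed measure on $[0,\infty)^2$ under the addition map, and then prove that this pushforward is in fact non-negative. If $\mu = c\,\delta_{x_0}$ is a non-negative multiple of a delta measure the claim is immediate: one computes $a_t(\ell)a_t(m) - a_t(k)a_t(n) = c^2 x_0^{k+n}\bigl(\tfrac{1}{\ell!\,m!} - \tfrac{1}{k!\,n!}\bigr)e^{-2tx_0}$, and the prefactor is non-negative because $j \mapsto \log j!$ is convex and $[\ell,m]\subseteq[k,n]$ with $\ell+m=k+n$ forces $\ell!\,m! \le k!\,n!$; thus $\nu = c^2 x_0^{k+n}\bigl(\tfrac{1}{\ell!\,m!}-\tfrac{1}{k!\,n!}\bigr)\delta_{2x_0}$ works. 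So assume now that $\mu$ has a log-concave density $f$ on $[0,\infty)$; being integrable and log-concave, $f$ has finite moments of all orders. Using $j!\,a_t(j) = \int_0^\infty x^j e^{-tx} f(x)\,dx$, multiplying out, symmetrizing in the two integration variables, and substituting $s = x+y$, one obtains
\[
a_t(\ell) a_t(m) - a_t(k) a_t(n) = \int_0^\infty e^{-ts}\, h(s)\,ds, \qquad h(s) := \int_0^s \tfrac12\, P(x,s-x)\, f(x)\, f(s-x)\,dx,
\]
where $P(x,y) = \dfrac{x^\ell y^m + x^m y^\ell}{\ell!\,m!} - \dfrac{x^k y^n + x^n y^k}{k!\,n!}$. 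By the moment bound $h \in L^1([0,\infty))$, so $\nu := h(s)\,ds$ is a finite measure; the content of the theorem is the pointwise inequality $h(s) \ge 0$ for every $s > 0$.

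Fix $s > 0$, substitute $x = su$, and use $k+n = \ell + m$ to pull out the common power $s^{k+n}$. This reduces the claim $h(s)\ge 0$ to
\[
\int_0^1 B(u)\, w(u)\, du \ge 0, \qquad B(u) := \frac{u^\ell(1-u)^m + u^m(1-u)^\ell}{\ell!\,m!} - \frac{u^k(1-u)^n + u^n(1-u)^k}{k!\,n!},
\]
where $w(u) := f(su)\, f(s(1-u))$. Now $w$ is log-concave on $(0,1)$, being a product of compositions of the log-concave $f$ with affine maps, and it is symmetric about $\tfrac12$; hence $\log w$ is non-decreasing on $(0,\tfrac12]$ and non-increasing on $[\tfrac12,1)$, so every super-level set $\{u:w(u)>c\}$ with $c>0$ is an open interval centred at $\tfrac12$. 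Writing $w(u) = \int_0^\infty \mathbbm{1}\{w(u)>c\}\,dc$ and exchanging the order of integration, the desired inequality follows once we know
\[
\Phi(r) := \int_{1/2 - r}^{1/2 + r} B(u)\,du \ \ge\ 0 \qquad\text{for every } r \in (0, \tfrac12].
\]

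Two facts give this. First, $\Phi(\tfrac12) = \int_0^1 B = 0$: by $\int_0^1 u^a(1-u)^b\,du = a!\,b!/(a+b+1)!$ the first bracket of $B$ integrates to $2/(\ell+m+1)!$ and the second to $2/(k+n+1)!$, and these agree since $\ell+m = k+n$. Second — and this is the crux — $B$ has sign pattern $-,+,-$ on $(0,1)$: there is $u_0 \in (\tfrac12,1)$ with $B<0$ on $(0,1-u_0)\cup(u_0,1)$ and $B>0$ on $(1-u_0,u_0)$ (in the degenerate case $k=\ell$, which forces $m=n$, one simply has $B\equiv 0$ and $\nu = 0$). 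Granting this, $\Phi'(r) = 2B(\tfrac12+r)$ is first positive then negative on $(0,\tfrac12)$, so $\Phi$ increases from $\Phi(0)=0$ and then decreases back to $\Phi(\tfrac12)=0$; hence $\Phi \ge 0$ on $[0,\tfrac12]$, as needed.

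It remains to establish the sign pattern of $B$, which I expect to be the main obstacle. I would factor $B(u) = u^k(1-u)^n\, C\!\bigl(\tfrac{u}{1-u}\bigr)$, where $C(t) = \dfrac{t^p + t^q}{\ell!\,m!} - \dfrac{t^{p+q}+1}{k!\,n!}$ with $p = \ell-k \le q = m-k$ and $p+q = n-k$ (a polynomial identity, checked using $k+n=\ell+m$); since $u^k(1-u)^n>0$ and $t=u/(1-u)$ is an increasing bijection of $(0,1)$ onto $(0,\infty)$ sending $\tfrac12$ to $1$, it suffices to show $C$ changes sign exactly once on $(0,1)$ and once on $(1,\infty)$. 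Put $t = e^\theta$ and multiply $C(e^\theta)$ by the positive quantity $e^{-(p+q)\theta/2}$; this yields the even function
\[
\frac{2\cosh(\gamma\theta)}{\ell!\,m!} - \frac{2\cosh(\delta\theta)}{k!\,n!}, \qquad \gamma := \frac{q-p}{2},\quad \delta := \frac{p+q}{2},\quad 0\le \gamma < \delta,
\]
which is $\ge 0$ at $\theta=0$ (because $\ell!\,m! < k!\,n!$ when $k<\ell$) and tends to $-\infty$ as $|\theta|\to\infty$. It changes sign exactly once on $(0,\infty)$ since $\theta \mapsto \cosh(\delta\theta)/\cosh(\gamma\theta)$ is strictly increasing there: its logarithmic derivative is $\delta\tanh(\delta\theta) - \gamma\tanh(\gamma\theta) > 0$ because $x\mapsto x\tanh x$ is increasing on $[0,\infty)$. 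Translating back through $u = e^\theta/(1+e^\theta)$ gives the asserted sign pattern of $B$. The proof concludes by assembling the pieces of the previous paragraphs and verifying the Laplace identity $\int_0^\infty e^{-ts}h(s)\,ds = a_t(\ell)a_t(m) - a_t(k)a_t(n)$ by Fubini together with the moment bound; everything outside the $\cosh$-ratio monotonicity step is routine bookkeeping with the constraint $k+n=\ell+m$.
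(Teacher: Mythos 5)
Your proof is correct, but it follows a genuinely different route from the paper's. The paper deduces Theorem \ref{cor_1013} abstractly: it invokes Theorem \ref{thm_1030} (hence the Berwald--Borell inequality) to get log-concavity of $(a_t(n))_n$ for every $t$, then uses the algebraic identity of Proposition \ref{lem_1139} --- that $-c_{k,\ell,m,n}'$ is a non-negative combination of log-concavity measurements --- to conclude that $c_{k,\ell,m,n}$ is completely monotone, and finally appeals to Bernstein's theorem to produce $\nu$ without ever exhibiting it. You instead start from the explicit candidate $\nu = P_\ell(\mu)*P_m(\mu)-P_k(\mu)*P_n(\mu)$ (which the paper only extracts afterwards, in Corollary \ref{prop_1730}(i)) and prove pointwise non-negativity of its density $h$ by a layer-cake argument: the symmetric unimodal weight $w(u)=f(su)f(s(1-u))$ is decomposed into centred superlevel intervals, and the kernel $B$ is shown to integrate non-negatively over each such interval via $\int_0^1 B=0$ together with the sign pattern $-,+,-$, which you obtain from the strict monotonicity of $\theta\mapsto\cosh(\delta\theta)/\cosh(\gamma\theta)$. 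This buys several things: it is self-contained (no Bernstein theorem, no Berwald--Borell input, no Theorem \ref{thm_1030}), it identifies $\nu$ explicitly, and taking $\int_0^\infty h\geq 0$ it even reproves the Berwald--Borell inequality $c_{k,\ell,m,n}(t)\geq 0$ as a byproduct; it is in fact a continuous analogue of the symmetrization-and-unimodality argument the paper uses for the discrete Lemma 3.8. What it gives up is the structural information of Proposition \ref{lem_1139} and Corollary \ref{cor_1140} (the closure of the cone of log-concavity measurements under $-d/dt$), which the paper reuses elsewhere, e.g.\ in the proof of Proposition \ref{prop_1414}. The few points you leave as routine are indeed routine: the superlevel sets of $w$ are symmetric intervals only up to endpoints (harmless for the integration), the degenerate cases $k=\ell$ and $\gamma=0$ need the separate treatment you give them, and the Fubini steps are justified by the exponential decay of an integrable log-concave density.
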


Theorem \ref{cor_1013} is proven in Section \ref{sec3}. Let us apply this theorem in a few examples. In the case where $\mu$ is an exponential measure, whose density is $t \mapsto \alpha e^{-\alpha t}$ on $[0, \infty)$,
the measures $\nu$ from Theorem \ref{cor_1013} vanish completely. In the case where $\mu$ is proportional to a Gamma distribution,
the measures $\nu$ are also proportional to Gamma distributions. When $\mu$ is the uniform measure on the interval $[1,2]$, the density of the measure $\nu = \nu_{0,1,1,2}$ from Theorem \ref{cor_1013} is depicted in Figure 1.
This log-concave density equals the convex function $(t-1) (t-2)/2$ in the interval $[2,3]$, and it equals $(t-2) (4-t)$ in $[3,4]$.

\bigskip
\begin{center}
\begin{tikzpicture}[scale = 2]
\draw [<->] (0,1.3) -- (0,0) -- (5.3,0);
\draw[samples=200, thick,  domain=2:3] plot (\x, {   (\x -2) * (\x - 1) / 2   });
\draw[samples=200, thick,  domain=3:4] plot (\x, {   (\x -2) * (4 - \x )      });

\draw (1,-0.03) -- (1,0.03);
\draw (2,-0.03) -- (2,0.03);
\draw (3,-0.03) -- (3,0.03);
\draw (4,-0.03) -- (4,0.03);
\draw (5,-0.03) -- (5,0.03);
\draw (-0.03, 1) -- (0.03, 1);
\node [below] at (1,0) {\small 1};
\node [below] at (2,0) {\small 2};
\node [below] at (3,0) {\small 3};
\node [below] at (4,0) {\small 4};
\node [below] at (5,0) {\small 5};
\node [left] at (0,1) {\small 1};

\end{tikzpicture}

Figure 1: The density of $\nu_{0,1,1,2}$ where $\mu$ is uniform on the interval $[1,2]$.
\end{center}

\medskip We suggest that the measure $\nu$ from Corollary \ref{cor_1013} be referred to as the Berwald-Borell transform of $\mu$ with parameters $(k, \ell, m, n)$.
All Berwald-Borell transforms of log-concave measures that we have encountered so far were log-concave  themselves.
It is a curious problem to characterize the family of measures $\nu$ which could arise
as the Berwald-Borell transform of a log-concave measure on $[0, \infty)$. Such a characterization could lead
to new constraints on the moments of log-concave measures on $[0, \infty)$
beyond the constraints posed by the Berwald-Borell inequality.

\medskip
{\it Acknowledgements.} We would like 
to thank the anonymous referee for telling us about Hirsch's theorem. The first-named author was
supported in part by a grant from the European Research Council (ERC).

\section{Proof of the log-concave Bernstein theorem}
\label{sec2}

The proof of Theorem \ref{thm_1030} combines ideas of Berwald  from the 1940s
with the earlier Post inversion formula for the Laplace transform.
The ``if'' direction of Theorem \ref{thm_1030} follows from:

\begin{lemma} Let $\mu$ be a finite, log-concave measure on $[0, \infty)$. Assume that $\vphi$ is given by (\ref{eq_1027}).
Then the alternating Taylor coefficients of $\vphi$ are log-concave. \label{lem_1344}
\end{lemma}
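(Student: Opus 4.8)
The plan is to reduce Lemma \ref{lem_1344} to the Berwald--Borell inequality, which the excerpt already alludes to as the conceptual engine behind the ``if'' direction. Fix $t > 0$. By differentiating (\ref{eq_1027}) under the integral sign $n$ times (justified since $\mu$ is a finite measure and $x \mapsto x^n e^{-tx}$ is bounded on $[0,\infty)$), one gets
\[
a_t(n) = (-1)^n \frac{\vphi^{(n)}(t)}{n!} = \int_0^\infty \frac{x^n}{n!} e^{-tx} \, d\mu(x).
\]
So the task is to show that for each fixed $t$, the sequence $n \mapsto \int_0^\infty \frac{x^n}{n!} e^{-tx} \, d\mu(x)$ is a log-concave sequence. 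Now observe that $e^{-tx} d\mu(x)$ is again a finite log-concave measure on $[0,\infty)$ (the product of the log-concave density of $\mu$ with the log-concave function $e^{-tx}$ is log-concave; in the delta-measure case this is trivial). Writing $d\mu_t(x) = e^{-tx} d\mu(x)$, the quantity $n! \, a_t(n) = \int_0^\infty x^n \, d\mu_t(x) = M_{\mu_t}(n+1)$ is, up to the index shift, the Mellin transform of $\mu_t$ evaluated at an integer. Hence $a_t(n) = M_{\mu_t}(n+1)/\Gamma(n+2) \cdot \Gamma(n+2)/n!$, i.e. $a_t(n) = (n+1) \cdot \bigl(M_{\mu_t}(n+1)/\Gamma(n+2)\bigr)$.

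The Berwald--Borell inequality states that $s \mapsto M_{\mu_t}(s)/\Gamma(s)$ is log-concave on the range where it is finite (an interval in $(0,\infty)$). Evaluating at $s = n+1$ gives that $n \mapsto M_{\mu_t}(n+1)/\Gamma(n+2)$ is a log-concave sequence. The remaining factor $n+1$ is itself a log-concave (indeed concave) positive sequence, and the product of two log-concave sequences is log-concave — one checks $(a_t(n))^2 \ge a_t(n-1) a_t(n+1)$ directly from the two inequalities $\bigl(\tfrac{M_{\mu_t}(n+1)}{\Gamma(n+2)}\bigr)^2 \ge \tfrac{M_{\mu_t}(n)}{\Gamma(n+1)} \cdot \tfrac{M_{\mu_t}(n+2)}{\Gamma(n+3)}$ and $(n+1)^2 \ge n(n+2)$, together with the observation that the support set $\{n : a_t(n) > 0\}$ is an interval of integers (being the intersection of $\NN$ with the interval of finiteness/positivity of the Mellin transform). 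This establishes (\ref{eq_926}).

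The one genuine subtlety — and the step I expect to need the most care — is verifying the hypotheses and the positivity/support structure cleanly: namely that $M_{\mu_t}(s)/\Gamma(s)$ is positive on an honest interval of $s$-values (it could be infinite for large $s$ if $\mu$ has heavy tails, but for a finite measure all positive moments are finite, so actually $M_{\mu_t}(s) < \infty$ for all $s \ge 1$; one should still be mindful of $s$ near $0$), and that the log-concavity inequality from Berwald--Borell extends to the endpoints of the support. Handling the degenerate case where $\mu = c\,\delta_{x_0}$ separately is trivial: then $a_t(n) = c\, x_0^n e^{-tx_0}/n!$, and $(a_t(n))^2/(a_t(n-1)a_t(n+1)) = (n+1)/n \ge 1$, so the sequence is log-concave (with support all of $\NN$ if $x_0 > 0$, or $\{0\}$ if $x_0 = 0$). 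I would state the absolutely continuous case as the main argument and dispatch the delta case in a sentence. An alternative, if one prefers to avoid quoting Berwald--Borell as a black box, is to prove the needed integer-index inequality $\bigl(\int x^n d\mu_t\bigr)^2 \ge \tfrac{n}{n+1}\int x^{n-1} d\mu_t \int x^{n+1} d\mu_t$ directly by a Prékopa--Leindler or localization argument, but since the excerpt explicitly invokes Berwald--Borell, quoting it is the intended route.
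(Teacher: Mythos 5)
Your overall strategy is exactly the paper's: dispatch the delta-measure cases by hand, observe that $d\mu_t(x)=e^{-tx}\,d\mu(x)$ is again log-concave, and invoke the Berwald--Borell inequality for the moment sequence of $\mu_t$. However, there is a concrete false step in the middle. You state that Berwald--Borell gives log-concavity of $s\mapsto M_{\mu_t}(s)/\Gamma(s)$ and that ``evaluating at $s=n+1$'' yields log-concavity of $n\mapsto M_{\mu_t}(n+1)/\Gamma(n+2)$. Evaluating at $s=n+1$ actually yields $M_{\mu_t}(n+1)/\Gamma(n+1)=\int_0^\infty (x^n/n!)\,d\mu_t(x)=a_t(n)$, which is already the desired sequence; your version has an off-by-one in the Gamma factor. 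The claim you actually assert, namely that $c_n:=M_{\mu_t}(n+1)/\Gamma(n+2)=a_t(n)/(n+1)$ is log-concave, is \emph{false} in general and is strictly stronger than Berwald--Borell: it would require $a_t(n)^2\geq \frac{(n+1)^2}{n(n+2)}\,a_t(n-1)a_t(n+1)$. For the exponential density $f(x)=e^{-x}$ (and $t=0$, say) one gets $c_n=1/(n+1)$, which is log-convex, not log-concave. So the subsequent step, multiplying the (not actually log-concave) sequence $c_n$ by the log-concave sequence $n+1$, does not establish anything.

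The repair is immediate and collapses your argument to the paper's: use the correct normalization $a_t(n)=M_{\mu_t}(n+1)/\Gamma(n+1)$, i.e.\ the statement that $k\mapsto \int_0^\infty x^k f_t(x)\,dx/k!$ is a log-concave sequence for the log-concave function $f_t(x)=e^{-tx}f(x)$, which is precisely the form of Berwald--Borell quoted in the paper. No product-of-sequences argument is needed. Your handling of the delta cases and of the support set $\{n: a_t(n)>0\}$ is fine (for a nonzero absolutely continuous $\mu$ all moments of $\mu_t$ are strictly positive, so the support is all of $\NN$).
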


\begin{proof} In the case where $\mu = c \, \delta_0$ for some $c \geq 0$, we have  $\vphi \equiv c$ and the alternating Taylor coefficients of $\vphi$ are trivially
	log-concave. In the case where $\mu = c \, \delta_{x_0}$ for $x_0 > 0$ we have $\vphi(t) = c \, \e^{-tx_0}$ and hence
\[
a_t(n) = \frac { c \,  \e^{-tx_0} x_0^n }{ n! } .
\]
Since $a_t (n) >0$ for every $n$ and $a_t(n+1) / a_t(n) = x_0 / (n+1)$ is
non-increasing, this is indeed a log-concave sequence.
In the case where $\mu$ has a log-concave density $f$, we denote $f_t(x) = e^{-tx} f(x)$ and observe that
\begin{equation}  \vphi^{(k)}(t) = \int_0^{\infty} (-x)^k e^{-tx} f(x) dx = \int_0^{\infty} (-x)^k f_t(x) dx \qquad \qquad (k \geq 0, t > 0). \label{eq_1321}
\end{equation}
The function $f_t$ is log-concave, and hence we may apply the Berwald-Borell inequality \cite{berwald, Borell}, see also Theorem 2.2.5 in \cite{BGVV}
or Theorem 5 in \cite{NO} for different proofs.
This inequality states that the sequence
\begin{equation}  k \rightarrow \frac{\int_0^{\infty} x^k f_t(x) dx}{k!} \qquad \qquad \qquad (k \geq 0) \label{eq_143} \end{equation}
is log-concave, completing the proof.
\end{proof}

We now turn to the proof of the ``only if'' direction of Theorem \ref{thm_1030}, which
relies on the Post inversion formula for the Laplace transform, see Feller \cite[Section VII.6]{feller}
or Widder \cite[Section VII.1]{widder}.
Suppose that $\vphi$ is continuous on $[0, \infty)$ and $C^{\infty}$-smooth on $(0, \infty)$,
and that the alternating Taylor coefficients $a_t(n)$ are log-concave.
In particular, the alternating Taylor coefficients are non-negative. We use Bernstein's theorem to conclude that
there exists a finite, non-negative Borel measure $\mu$ on $[0, \infty)$ such that (\ref{eq_1027}) holds true.
All that remains is to prove the following:
\begin{proposition} The measure $\mu$ is log-concave.
\label{prop_1344}
\end{proposition}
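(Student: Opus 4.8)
The plan is to recover the measure $\mu$ from the Post inversion formula and then show that a log-concave limit of log-concave (approximate) densities is log-concave. Recall that the Post inversion formula expresses the density-type object associated with $\mu$ as a limit: for a continuous test function approach, one has weak convergence of the measures with densities
\[
x \mapsto \frac{(-1)^n}{n!}\left(\frac{n}{x}\right)^{n+1}\vphi^{(n)}\!\left(\frac{n}{x}\right) = \frac{n^{n+1}}{x^{n+1}}\, a_{n/x}(n)
\]
towards $\mu$ as $n \to \infty$. So the first step is to write $g_n(x) = n^{n+1} x^{-n-1} a_{n/x}(n)$ and observe that, by hypothesis, for each fixed $n$ the sequence $k \mapsto a_{t}(k)$ is log-concave in $k$; what I actually need is log-concavity of $g_n$ as a function of the continuous variable $x$, which is a different statement and will not hold exactly. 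This forces a discretization: evaluate $g_n$ only at the grid points $x$ for which $n/x$ equals a fixed base point times a ratio of integers, or alternatively compare $g_n$ at $x, y$ and the point where the relevant Taylor index is the average.

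The cleaner route, which I would actually pursue, is to use Proposition \ref{prop_1414} together with a change of variables that turns the log-concavity of the integer sequence into log-concavity of $g_n$ along a geometric progression. Fix $t_0 > 0$ and consider the points $x_j = n/(t_0 q^j)$ for $j \in \ZZ$ and a fixed ratio $q > 1$; then $g_n(x_j)$ is, up to an explicit log-linear factor in $j$, equal to $a_{t_0 q^j}(n)$. Using Proposition \ref{prop_1414} one sees that log-concavity of $(a_s(k))_k$ at large $s$ propagates, and combined with the log-concavity in the index $k$ one can extract a two-variable log-concavity statement for $(s,k) \mapsto a_s(k)$ of the type needed; evaluating along the curve $s = n/x$, $k = n$ and letting $n \to \infty$ should yield log-concavity of the limiting density on any geometric progression, hence — since $q > 1$ is arbitrary and the limit density is continuous where it is positive — genuine log-concavity of $\mu$'s density. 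One must separately handle the possibility that $\mu$ has an atom, which happens exactly when the density part degenerates; there the conclusion that $\mu$ is a delta measure should come from the sequence $(a_{t}(n))_n$ being supported on all of $\NN$ with $a_t(n+1)/a_t(n)$ non-increasing and bounded below, i.e.\ eventually constant ratio behavior.

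The main obstacle is precisely the mismatch between discrete log-concavity (in the Taylor index $n$, which is what the hypothesis gives) and continuous log-concavity (in the inversion variable $x$, which is what the conclusion demands). The Berwald–Borell inequality went the easy way in Lemma \ref{lem_1344} because integrating against $x^k e^{-tx}\,dx$ automatically produces a log-concave sequence from a log-concave density; inverting this implication is the whole content of the proposition. I expect the resolution to require a careful uniform estimate showing that the defect in log-concavity of $g_n$ (the amount by which $g_n(x)^2 \geq g_n(x')g_n(x'')$ can fail for $x$ strictly between $x', x''$ on a grid) tends to $0$ as $n \to \infty$, so that it survives passage to the limit; quantitatively this defect should be $O(1/n)$ coming from the discrepancy between $\lfloor (k'+k'')/2\rfloor$, $\lceil (k'+k'')/2 \rceil$ and $(k'+k'')/2$, exactly the kind of rounding that Theorem \ref{thm_603} is designed to control. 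Once the limiting density is known to be log-concave on every geometric progression through $(0,\infty)$, continuity upgrades this to log-concavity on all of $(0,\infty)$, and the proof is complete.
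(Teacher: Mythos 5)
There is a genuine gap, and you have in fact put your finger on it yourself: the hypothesis gives log-concavity of $k \mapsto a_t(k)$ at \emph{fixed} $t$, while the pointwise Post formula $g_n(x) = n^{n+1}x^{-n-1}a_{n/x}(n)$ varies $t = n/x$ at \emph{fixed} index $n$, so the hypothesis says nothing directly about $g_n$ as a function of $x$. Your proposed repairs do not close this gap. The ``two-variable log-concavity statement for $(s,k)\mapsto a_s(k)$'' is never formulated precisely, let alone proved, and it is not a consequence of Proposition \ref{prop_1414} (which only propagates log-concavity in $k$ from large $t$ to small $t$, one value of $t$ at a time). Likewise, the claim that the log-concavity defect of $g_n$ is $O(1/n)$ and is ``exactly the kind of rounding that Theorem \ref{thm_603} is designed to control'' is speculation: Theorem \ref{thm_603} plays no role here, and no uniform estimate is actually derived. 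As written, the argument is a program, not a proof.

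The paper's resolution is to swap the roles of the two variables in the inversion formula. Fix $t$ and regard $n/t$ as the spatial variable: define a density $g_t$ on $[0,\infty)$ that equals $t^{n+1}a_t(n)$ at the grid point $x = n/t$ and interpolates geometrically (log-linearly) in between. Then the hypothesis --- log-concavity of $(a_t(n))_{n\ge 0}$ for each fixed $t$ --- translates \emph{immediately} into log-concavity of $g_t$ in $x$, since $(\log g_t)'$ is piecewise constant with slopes $\log t + t\log(a_t(n+1)/a_t(n))$, non-increasing in $n$. The inversion step is done in cumulative form: $\sum_{n\le Rt} t^n a_t(n) = \int \PP(N_{tx}\le Rt)\,d\mu(x) \to \mu([0,R)) + \tfrac12\mu(\{R\})$ by the law of large numbers for the Poisson distribution, and a Riemann-sum (Euler--Maclaurin) estimate shows $\mu_t([0,R))$ has the same limit, provided $\mu(\{0\})=0$ (which is checked separately by a Cauchy--Schwarz argument). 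One then applies Pr\'ekopa--Leindler to each log-concave $\mu_t$, passes to the limit on intervals with $\mu$-null boundary, and invokes Borell's characterization (Lemma \ref{lem_1503}) of measures satisfying the interval Brunn--Minkowski inequality; that lemma also disposes of the atomic case you flag at the end. If you want to salvage your outline, the single change needed is this exchange of the roles of $n$ and $t$ in the inversion formula, which makes the discrete-to-continuous transfer of log-concavity automatic rather than something requiring a delicate $O(1/n)$ defect estimate.
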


The proof of Proposition \ref{prop_1344} requires some preparation. First, it follows from (\ref{eq_1411}) and (\ref{eq_1027}) that
for any $R, t > 0$,
\begin{equation} \sum_{n=0}^{\lfloor R t \rfloor} t^n a_t(n) = \sum_{n=0}^{\lfloor R t \rfloor} \frac{t^n}{n!} \int_0^{\infty} x^n e^{-t x} d \mu(x)
= \int_0^{\infty} \PP \left( N_{tx} \leq R t   \right)  d \mu(x), \label{eq_1122}
\end{equation}
where $N_s$ is a Poisson random variable with parameter $s$, i.e.,
\[
\PP( N_s = n )
= e^{-s} \frac{ s^n }{ n! }, \qquad \text{for} \ n = 0, 1, 2,\ldots
\]
The random variable $N_s$ has expectation $s$ and standard-deviation $\sqrt{s}$. By the central limit theorem for the Poisson distribution
(see Feller \cite[Chapter VII]{feller} or 
Schilling, Song, and Vondra\v cek \cite[Lemma 1.1]{ssv}), for any $\alpha > 0$,
\begin{equation}  \PP(N_s \leq \alpha s) \stackrel{s \rightarrow \infty} \longrightarrow \left \{ \begin{array}{cc} 1 & \alpha > 1 \\ 1/2 & \alpha = 1 \\ 0 & \alpha < 1 \end{array} \right.
\label{eq_1123} \end{equation}
The left-hand side of (\ref{eq_1123}) is always between zero and one.
Therefore we may use the bounded convergence theorem, and conclude from (\ref{eq_1122}) that
for any $R > 0$,
\begin{equation}  \lim_{t \rightarrow \infty} \sum_{n=0}^{\lfloor R t \rfloor} t^n a_t(n)  = \mu( [0, R) ) + \frac{1}{2} \cdot \mu ( \{ R \} ). \label{eq_1141}
\end{equation}
For $t > 0$ define $g_t: [0, \infty) \rightarrow \RR$ via
$$ g_t(x) =  \left \{ \begin{array}{ll} t^{n+1} \cdot a_t(n) &  x = n /t \ \text{for some integer \ } n \geq 0 \\
t^{x+1} \cdot a_t(n)^{1-\lambda} \cdot a_t(n+1)^{\lambda} \phantom{bla} &  x = (n + \lambda) / t \ \text{for} \ \lambda \in (0,1), n \geq 0. \end{array} \right.
$$
Write $\mu_t$ for the measure on $[0, \infty)$ whose density is $g_t$.
We think about $\mu_t$ as an approximation for the discrete measure on $[0, \infty)$ that has an atom at $n/t$
of weight $t^n a_t(n)$ for any $n \geq 0$.

\begin{lemma} Assume that $\mu( (0, \infty)) > 0$. Then for any $t > 0$ the measure $\mu_t$ is log-concave on $[0, \infty)$. Moreover, if $\mu ( \{ 0 \} ) = 0$
	then for any $R > 0$,
	$$ \mu_t([0,R)) \stackrel{t \rightarrow \infty} \longrightarrow \mu( [0, R) ) + \frac{1}{2} \cdot \mu ( \{ R \} ). $$
	\label{lem_1936}
\end{lemma}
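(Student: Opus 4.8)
The plan is to prove the two assertions separately. For log-concavity of $\mu_t$, I would start from the observation that the hypothesis $\mu((0,\infty)) > 0$ forces $a_t(n) = \int_0^\infty (x^n/n!)\, e^{-tx}\, d\mu(x) > 0$ for every integer $n \ge 0$ and every $t > 0$, so that $\log g_t$ is defined on all of $[0,\infty)$. By construction $\log g_t$ is continuous and affine on each interval $[n/t,(n+1)/t]$, with $\log g_t(n/t) = (n+1)\log t + \log a_t(n)$; thus $g_t$ is log-concave exactly when the slopes of $\log g_t$ form a non-increasing sequence. A one-line computation shows that the slope of $\log g_t$ on $[n/t,(n+1)/t]$ equals $t\log( t\,a_t(n+1)/a_t(n) )$, which is non-increasing in $n$ precisely because $a_t(n+1)/a_t(n)$ is non-increasing in $n$, i.e.\ because the sequence $(a_t(n))_{n\ge0}$ is log-concave. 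That settles the first half.

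For the convergence assertion I would compare $\mu_t([0,R))$ with the partial sums appearing in (\ref{eq_1141}). Write $w_n = t^n a_t(n)$ and $M = \lfloor Rt\rfloor$. Reading (\ref{eq_1122}) term by term gives $w_n = \int_0^\infty \PP(N_{tx}=n)\,d\mu(x)$, and summing over $n$ gives $\sum_{n\ge0} w_n = \mu([0,\infty)) < \infty$; in particular $\sup_n w_n < \infty$, and since $w_{n+1}/w_n = t\,a_t(n+1)/a_t(n)$ is non-increasing the sequence $(w_n)_{n\ge0}$ is unimodal. On each interval $[n/t,(n+1)/t]$ the function $g_t$ is log-affine with endpoint values $tw_n$ and $tw_{n+1}$, hence lies between them, so $\min(w_n,w_{n+1}) \le \int_{n/t}^{(n+1)/t} g_t \le \max(w_n,w_{n+1})$, and similarly $\int_{M/t}^R g_t \le \max(w_M,w_{M+1})$. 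Summing over $0 \le n \le M-1$ and comparing with $\sum_{n=0}^{M} w_n$ I would obtain
\[
\Bigl| \mu_t([0,R)) - \sum_{n=0}^{M} w_n \Bigr| \le \sum_{n=0}^{M-1}|w_{n+1}-w_n| + 2\sup_{n\ge0} w_n \le 4\sup_{n\ge0} w_n ,
\]
the final bound using that the total variation of a unimodal sequence is at most twice its supremum. By (\ref{eq_1141}) the sum $\sum_{n=0}^{M} w_n = \sum_{n=0}^{\lfloor Rt\rfloor} t^n a_t(n)$ already converges to $\mu([0,R)) + \tfrac12\mu(\{R\})$, so everything reduces to showing $\sup_n w_n \to 0$ as $t\to\infty$.

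For that last point I would fix $\delta > 0$ and split $w_n = \int_0^\infty \PP(N_{tx}=n)\,d\mu(x)$ at $\delta$: the part over $[0,\delta]$ is at most $\mu([0,\delta])$, while on $(\delta,\infty)$, as soon as $t\delta \ge 1$, one has $\PP(N_{tx}=n) \le \max_m \PP(N_{tx}=m) \le C/\sqrt{tx} \le C/\sqrt{t\delta}$ for an absolute constant $C$, because the mode of a Poisson$(s)$ law is $\lfloor s\rfloor$ and its mass there is $O(1/\sqrt s)$ for $s \ge 1$ by Stirling's formula. Hence $w_n \le \mu([0,\delta]) + C\,\mu([0,\infty))/\sqrt{t\delta}$ uniformly in $n$; letting $t\to\infty$ and then $\delta\to0^+$, and using $\mu(\{0\}) = 0$, gives $\sup_n w_n \to 0$, which closes the argument.

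I expect the main obstacle to be precisely this final limit $\sup_n w_n \to 0$: the total mass $\sum_n w_n = \mu([0,\infty))$ is bounded uniformly in $t$, but that alone does not rule out a tall, narrow spike in the sequence $(w_n)$, so one genuinely needs both the uniform control of the Poisson point masses and the hypothesis that $\mu$ has no atom at the origin. Everything else — the slope identity underlying log-concavity and the sandwiching of $\int g_t$ between consecutive values of $w_n$ — is routine.
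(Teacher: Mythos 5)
Your proposal is correct and follows essentially the same route as the paper: log-concavity of $\mu_t$ via the monotonicity in $n$ of the slopes of the piecewise-affine function $\log g_t$, and the convergence via an Euler--Maclaurin/unimodality comparison of $\mu_t([0,R))$ with the partial sum $\sum_{n=0}^{\lfloor Rt\rfloor} t^n a_t(n)$ from (\ref{eq_1141}), reduced to showing $\sup_n t^n a_t(n)\to 0$. The paper handles that last limit by dominated convergence applied to $\sup_n \frac{(tx)^n}{n!}e^{-tx}$ rather than your explicit $\delta$-splitting with the Stirling bound on the Poisson mode, but this is only a cosmetic difference.
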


\begin{proof} Since $\mu( (0,\infty)) > 0$, for any $t >0 $ and $n \geq 0$,
	$$ a_t(n) = \int_0^{\infty} \frac{x^n}{n!} e^{-tx} d \mu(x) > 0. $$ 
	The density $g_t$ is locally-Lipschitz, and for any integer $n \geq 0$ and $x \in (n/t, (n+1)/t)$,
	$$ (\log g_t)^{\prime}(x) = \log t + t \log \frac{a_t(n+1)}{a_t(n)}. $$
	The sequence $(a_t(n))_{n \geq 0}$ is log-concave, hence $a_t(n+1)/a_t(n)$ is non-increasing in $n$. We conclude
	that $(\log g_t)^{\prime}(x)$, which exists for almost any $x > 0$, is a non-increasing function of  $x \in [0, \infty)$. This shows that 
	the locally-Lipschitz function $g_t$ is a log-concave function,
	and consequently $\mu_t$ is a
	log-concave measure.
	In particular, the density $g_t$ is unimodular, meaning that for some $x_0 \geq 0$, the function
	$g_t$ is non-decreasing in $(0, x_0)$ and non-increasing in $(x_0, \infty)$. We claim that for any $R, t > 0$
	we have the Euler-Maclaurin type bound:
	\begin{equation}
	\left| \int_0^R g_t(x) dx \, - \, \sum_{n=0}^{\lfloor R t \rfloor} \frac{1}{t} \cdot g_t \left( \frac{n}{t} \right) \right| \leq \frac{3}{t} \cdot \sup_{x > 0} g_t(x).
	\label{eq_352}
	\end{equation}
	Indeed, the sum in (\ref{eq_352}) is a Riemann sum related to the integral of $g_t$ on the interval $I = [0, \lfloor t R + 1 \rfloor / t]$.
	This Riemann sum corresponds to a partition of $I$ into segments of length $1/t$, and by unimodularity, this Riemann sum can deviate from the actual integral by at most
	$2/t \cdot \sup_{x > 0} g_t(x)$. Since the symmetric difference between $I$ and $[0,R]$ is an interval of length at most $1/t$,
	the relation (\ref{eq_352}) follows. According to (\ref{eq_352}), for any $R, t > 0$,
	\begin{equation} \left | \mu_t([0,R)) \, - \, \sum_{n=0}^{\lfloor R t \rfloor} t^n a_t(n) \right| \leq \frac{3}{t} \cdot \sup_{x > 0} g_t(x)
	= 3 \cdot \sup_{n \geq 0} t^n a_t(n).
	\label{eq_445} \end{equation}
	Next we use our assumption that $\mu ( \{ 0 \} ) = 0$ and
	also the fact that $\sup_n e^{-s} s^n / n!$ tends to zero as $s \rightarrow \infty$,
	as may be verified routinely. This shows that for $t > 0$,
	\begin{align*}
	\sup_{n \geq 0} t^n a_t(n) = \sup_{n \geq 0} \int_0^{\infty} \frac{(tx)^n}{n!} e^{-tx}  d \mu(x)
	& \leq \int_0^{\infty} \left( \sup_{n \geq 0} \frac{(tx)^n}{n!} e^{-tx} \right) d \mu(x) \stackrel{t \rightarrow \infty} \longrightarrow 0,
	\end{align*}
	where we used the dominated convergence theorem in the last passage.
	The lemma now follows from (\ref{eq_1141}) and (\ref{eq_445}).
\end{proof}

The following lemma is due to Borell,  and its proof
is contained in \cite[Lemma 3.3]{Borell2} and the last paragraph of the proof of \cite[Theorem 2.1]{Borell2}. For the reader's convenience, we include a short proof. 
For $A, B \subseteq \RR$ and $\lambda \in \RR$ we write $A + B = \{ x + y \, ; \, x \in A, y \in B \}$
and $\lambda A = \{ \lambda x \, ; \, x \in A \}$. 
We say that an interval $I \subseteq \RR$ is rational 
if it has a finite length and if its endpoints are rational numbers.

\begin{lemma} Let $\mu$ be a finite Borel measure on $\RR$ such that for 
any intervals $I,J \subseteq \RR$ and $\lambda \in (0,1)$,
	\begin{equation} \mu \left( \lambda I + (1- \lambda) J \right) \geq \mu(I)^{\lambda} \mu(J)^{1-\lambda}. \label{eq_1429} \end{equation}
	Then $\mu$ is log-concave (i.e., either $\mu = c \delta_{x_0}$ for some $c \geq 0, x_0 \in \RR$ or else $\mu$ has a log-concave density). Besides,  the conclusion remains valid if we only assume that~\eqref{eq_1429} holds for rational $I,J$ and $\lambda$. 
	\label{lem_1503}
\end{lemma}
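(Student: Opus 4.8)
The plan is to use the one-dimensional nature of the problem to force the distribution function of $\mu$ to admit a continuous version, and then to extract the density together with its log-concavity. Throughout one may assume $\mu$ is not a point mass, and one sets $a=\inf(\mathrm{supp}\,\mu)$ and $b=\sup(\mathrm{supp}\,\mu)$, so that $a<b$. First I would record that $\mu$ charges every non-degenerate subinterval of $(a,b)$: if $\mu((c,d))=0$ with $a<c<d<b$, one picks short rational intervals $I\subseteq(a,c)$ and $J\subseteq(d,b)$ of positive $\mu$-measure (these exist since $a,b$ are the endpoints of the support) and then a rational $\lambda\in(0,1)$ with $\lambda I+(1-\lambda)J\subseteq(c,d)$, contradicting (\ref{eq_1429}). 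Consequently $G(t):=\mu((-\infty,t])$ is strictly increasing on $[a,b]$.

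The key step is to prove that $G$ is continuous and locally Lipschitz on $(a,b)$ and that $\mu$ is non-atomic. Fix a rational $p_0<a$; for rational $t>a$ one has $\mu([p_0,t])=G(t)$, so applying (\ref{eq_1429}) with $I=[p_0,t_1]$, $J=[p_0,t_2]$ and rational $\lambda$ shows that $t\mapsto\log G(t)$ is midpoint-concave (indeed $\mathbb{Q}$-concave) on the rationals of $(a,\infty)$, and it is bounded above by $\log\mu(\RR)$. A classical fact about midpoint-concave, locally-bounded-above functions (essentially the Bernstein--Doetsch phenomenon) then shows that $\log G$ agrees on the rationals with a continuous concave function on $(a,\infty)$; since $G$ is monotone, squeezing its values between those at nearby rationals forces $G$ to coincide with that function, so $G$ is continuous and locally Lipschitz on $(a,\infty)$ (and $\log G$ is concave). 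In particular $\mu$ has no atom in $(a,\infty)$, which includes $b$. The symmetric argument with a rational $q_0>b$ applied to the survival function $s\mapsto\mu([s,\infty))$ shows this is continuous on $(-\infty,b)$, ruling out an atom at $a$ as well. Hence $\mu$ is non-atomic, $G$ is continuous on $\RR$ and locally Lipschitz on $(a,b)$ with $G(a)=0$ and $G(b)=\mu(\RR)$, so $\mu$ is absolutely continuous with a density $f:=G'$ supported on $[a,b]$.

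It remains to produce a log-concave version of $f$. Since $\mu$ has no atoms, $\mu([s,t])=G(t)-G(s)$, so (\ref{eq_1429}) says precisely that $(s,t)\mapsto\log(G(t)-G(s))$ is $\mathbb{Q}$-concave on the rational points of the triangle $\{a\le s<t\le b\}$; by continuity of $G$ this extends to genuine concavity of $(s,t)\mapsto\log(G(t)-G(s))$ on that triangle. Applying this concavity to the pair $(x,x+h)$ and $(y,y+h)$ with rational weight $\lambda\in(0,1)$, exponentiating, dividing by $h$, and letting $h\downarrow0$ through points where $G'=f$ exists, one obtains
\[
f(\lambda x+(1-\lambda)y)\ \geq\ f(x)^{\lambda}\,f(y)^{1-\lambda}
\]
for almost every $(x,y)$ and every rational $\lambda$. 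A measurable function satisfying this inequality almost everywhere coincides almost everywhere with a genuinely log-concave function (apply the Bernstein--Doetsch-type statement once more, to $-\log f$); replacing $f$ by that version exhibits $\mu$ as a log-concave measure. Since every use of (\ref{eq_1429}) above involved only rational intervals and a rational $\lambda$, this also establishes the stronger ``rational'' form of the lemma.

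The main obstacle — really the only delicate point — is the passage from the rational/midpoint data furnished by the hypothesis to honest continuity and concavity: one must know that a midpoint-concave, locally-bounded-above function on the rationals of an interval is the restriction of a concave function, and must combine this with the monotonicity and one-sided continuity of $G$ (and of the survival function) to deduce that $G$ is continuous, which is exactly what simultaneously kills the atoms and yields the density. Everything after that is soft: once $G$ is a continuous distribution function one may freely approximate real intervals by rational ones, and the log-concavity of $f$ falls out of a difference quotient in the two-variable concavity inequality.
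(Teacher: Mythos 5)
Your route is genuinely different from the paper's and, in outline, it works; but it is longer and it leans on one unproved step at the very end. The paper's proof is much shorter: it sets $f_{\eps}(x)=\mu((x-\eps,x+\eps))/(2\eps)$, notes that \eqref{eq_1429} applied to the two windows $(x-\eps,x+\eps)$ and $(y-\eps,y+\eps)$ makes $f_\eps$ log-concave for every fixed $\eps$, takes $f=\limsup_{\eps\to0}f_\eps\in[0,+\infty]$ (log-concavity survives the $\limsup$), and identifies $f$ via the Lebesgue differentiation theorem as the density of the absolutely continuous part of $\mu$; the dichotomy ``$f$ finite everywhere'' versus ``$f(x_0)=+\infty$ somewhere'' then yields the log-concave density and the Dirac case in one stroke, with no need to discuss the distribution function, atoms, or continuity. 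Your approach buys a more concrete picture (continuity and local Lipschitzness of $G$, two-variable concavity of $(s,t)\mapsto\log\mu([s,t])$), but at the cost of several case distinctions you gloss over: the choice $I\subseteq(a,c)$ in your first step can fail if $\mu$ has an atom at $a$ followed by a gap (take $I$ to be a rational interval containing $a$ instead), the case $a=-\infty$ needs a limiting argument since no rational $p_0<a$ exists, and the Bernstein--Doetsch direction is misquoted (midpoint concavity needs a lower bound, not an upper bound; what actually saves you is that you have the full three-point chord condition for all rational triples, which extends to a concave function on the reals by density plus monotonicity of $G$, with no Baire/measure argument required).

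The one step I would call a genuine gap is the last one: from ``$f(\lambda x+(1-\lambda)y)\geq f(x)^\lambda f(y)^{1-\lambda}$ for almost every $(x,y)$'' you conclude that $f$ agrees a.e.\ with a log-concave function by ``applying Bernstein--Doetsch to $-\log f$.'' Bernstein--Doetsch does not apply here: it requires the midpoint inequality to hold for \emph{all} pairs, whereas your difference-quotient argument only delivers it off a null set of pairs (because the midpoint $z=\lambda x+(1-\lambda)y$ need not be a differentiability point of $G$). The statement you need --- that a measurable, almost-everywhere midpoint-convex function is a.e.\ equal to a convex function --- is a true but genuinely different theorem (Kurepa/Ger-type results on almost convex functions) and deserves either a citation or a proof. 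The cleanest repair stays inside your own framework: having established the everywhere-valid two-variable concavity of $\Phi(s,t)=\log(G(t)-G(s))$, observe that $x\mapsto\e^{\Phi(x-\eps,x+\eps)}/(2\eps)$ is genuinely log-concave for each $\eps>0$, and pass to the limit $\eps\to0$ as in the paper; this produces an honest log-concave version of the density without any a.e.\ caveat.
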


\begin{proof} For $x \in \RR$ and $\eps > 0$ set 
	$$ f_{\eps}(x) = \frac{ \mu((x-\eps, x+\eps) )}{2 \eps} . $$
We deduce from (\ref{eq_1429}) that $f_{\eps}: \RR \rightarrow (0, \infty)$ is a log-concave function for all $\eps > 0$. 
Denote $$ f(x) = \limsup_{\eps \rightarrow 0} f_{\eps}(x) \in [0, +\infty] \qquad \qquad \textrm{for} \ x \in \RR. $$ Since 
$f_{\eps}$ is log-concave, it follows that 
for all $0 < \lambda < 1$ and $x, y \in \RR$
\begin{equation}  f(\lambda x + (1-\lambda) y) \geq f(x)^{\lambda} f(y)^{1-\lambda}, \label{eq_1713} 
\end{equation}
where in case $f(x) = \infty$ or $f(y) = \infty$ we interpret (\ref{eq_1713}) by continuity. 
By the Lebesgue differentation theorem, the function $f$ is the density of the absolutely-continuous component of the finite measure $\mu$. In particular, $f$ is integrable. Moreover, if $f(x) < \infty$ for all $x \in \RR$, then the measure $\mu$ is absolutely-continuous, and in any case, the set of all points $x \in \RR$ where $f(x) = 0$ has a zero $\mu$-measure. 

\medskip If $f(x) < \infty$ for all $x \in \RR$, then $\mu$ is absolutely-continuous 
with a log-concave density $f$, as required. 

\medskip Otherwise, there exists $x_0 \in \RR$ with $f(x_0) = +\infty$. 
Since $f(x_0) = +\infty$, 
 necessarily $f(x) = 0$ for $x \neq x_0$, as otherwise (\ref{eq_1713}) 
implies that $f$ equals $+\infty$ in an interval of positive length, 
in contradiction to the integrability of $f$. Thus $f(x) = 0$
for all $x \neq x_0$, and $\mu$ is supported at the point $\{ x_0 \}$. 
In this case necessarily $\mu = c \delta_{x_0}$ for some $c \geq 0$. 

\medskip For the second part of the lemma, observe that 
\[
 \mu((x-\eps, x+\eps) ) = \sup \left\{ \mu (I); I \subsetneq (x-\eps,x+\eps ) 
\text{ is a rational interval} \right\} . 
\]
Using this equality, one can show that if~\eqref{eq_1429} holds for rational $I,J$ and $\lambda$ 
only, then $f_\eps$ is log-concave.
We then proceed as above and conclude that $\mu$ is log-concave. 
\end{proof}

\begin{proof}[Proof of Proposition \ref{prop_1344}] We may assume 
	that $\mu ((0,\infty)) > 0$ as otherwise $\mu = c \delta_0$
	and the conclusion trivially holds. Therefore $a_t(n) > 0$
	for all $t$ and $n$. By the log-concavity of the sequence of alternating Taylor coefficients,
	\begin{equation}
	\frac{\left( \int_0^{\infty} x e^{-tx} d \mu(x) \right)^2}{
		\int_0^{\infty} (x^2/2) e^{-tx} d \mu(x) 	} = \frac{a_t(1)^2}{a_t(2)} \geq a_t(0) = \int_0^{\infty} e^{-tx} d \mu(x) \stackrel{t \rightarrow \infty}\longrightarrow \mu( \{ 0 \}). \label{eq_1456}
	\end{equation}
	For $t > 0$ write $\nu_t$ for the measure 
	on $(0, \infty)$ whose density with respect to $\mu$ equals $x \mapsto e^{-tx}$. Then by the Cauchy-Schwartz inequality,
	\begin{equation} \frac{ \left( \int_0^{\infty} x e^{-tx} d \mu(x) \right)^2 }{\int_0^{\infty} x^2 e^{-tx} d \mu(x) }
	=  \frac{ \left( \int_0^{\infty} x d \nu_t(x) \right)^2 }{\int_0^{\infty} x^2 d \nu_t(x) }
	\leq 
	\nu_t( (0, \infty)) \stackrel{t \rightarrow \infty}\longrightarrow 0.
	\label{eq_1457}
	\end{equation}
	From (\ref{eq_1456}) and (\ref{eq_1457}) we see that $\mu( \{ 0 \}) = 0$, which is required for the application of the second part of Lemma \ref{lem_1936}.
		Let $I, J \subseteq [0, \infty)$ be intervals and $0 < \lambda < 1$. Set $K = \lambda I + (1-\lambda) J$ and assume first that 
\begin{equation}
\mu(\partial I) = \mu(\partial J) = \mu(\partial K) = 0, \label{eq_1627}
\end{equation}
where $\partial I$ is the boundary of the interval $I$.
In this case, by Lemma \ref{lem_1936},
		\begin{equation} \mu(I) = \lim_{t \rightarrow \infty} \mu_t\left( I \right), \quad
		\mu(J) = \lim_{t \rightarrow \infty} \mu_t(J), \quad \mu(K) = \lim_{t \rightarrow \infty} \mu_t(K). \label{eq_1511}
		\end{equation}
		Since $\mu_t$ is log-concave, the Pr\'ekopa-Leindler inequality (see, e.g., \cite[Theorem 1.2.3]{BGVV}) implies that
		for all $t > 0$,
		\begin{equation}  \mu_t \left( K \right) \geq \mu_t(I)^{\lambda} \mu_t(J)^{1-\lambda}. \label{eq_1233}
		\end{equation}
		From (\ref{eq_1511}) and (\ref{eq_1233}) we thus deduce that 
		\begin{equation}
		\mu( \lambda I + (1-\lambda) J) \geq \mu(I)^{\lambda} \mu(J)^{1-\lambda}. \label{eq_1732}
		\end{equation}
		We know that (\ref{eq_1732}) holds
		true for any intervals $I, J \subseteq \RR$ and $0 < \lambda < 1$
		satisfying condition (\ref{eq_1627}), where $K = \lambda I + (1-\lambda) I$. Since $\mu$ is a 		     finite measure, it can only have a countable number of atoms. Hence there exists $\alpha \in		  \RR$ such that none of these atoms are congruent to $\alpha$ mod $\mathbb Q$. In other words 			by translating $\mu$, we may assume that it has no rational atoms. Then~\eqref{eq_1732} holds for all rational $I,J,\lambda$ and Lemma~\ref{lem_1503} implies that $\mu$ is a log-concave measure, as desired. 
\end{proof}

The proof of Theorem \ref{thm_1030} is complete.

\begin{proof}[Proof of Corollary \ref{cor_1535}]
	If $\mu$ is of the form $c \delta_0$ for some $c \geq 0$, then the corollary is trivial.
	Otherwise, the alternating
	Taylor coefficients $a_t(n) = (-1)^n \vphi^{(n)}(t) / n!$ are positive for 
	every $t > 0$ and $n \geq 0$. 
	The measure $\mu$ is log-concave if and only if the sequence of  
	alternating
	Taylor coefficients is log-concave for any $t > 0$, which happens if and only if
	\begin{equation} \left( \frac{\vphi^{(n)}(t)}{n!} \right)^2 - \frac{\vphi^{(n-1)}(t)}{(n-1)!} \cdot \frac{\vphi^{(n+1)}(t)}{(n+1)!} \geq 0 \qquad \qquad \qquad (n \geq 1, t > 0). \label{eq_1540}
	\end{equation}
	Denote by $b_n(t)$ the expression on the left-hand side of (\ref{eq_1540}) multiplied by $(n!)^2$. Then,
	$$ \frac{d^2}{dt^2} \left| \vphi^{(n-1)}(t) \right|^{-1/n} = \frac{d^2}{dt^2} \left( (-1)^{n-1} \cdot \vphi^{(n-1)}(t) \right)^{-1/n}
	=\frac{n+1}{n^2} \left| \vphi^{(n-1)}(t) \right|^{-(2n+1)/n} \cdot b_n(t).
	$$
	Hence $b_n(t) \geq 0$ for all $t> 0$ if and only if the function $\left| \vphi^{(n-1)}(t) \right|^{-1/n}$  is convex in $ (0, \infty)$.
\end{proof}

\section{The log-concavity measurements are completely  monotone}
\label{sec3}

In this section we prove Proposition \ref{prop_1414} and Theorem \ref{cor_1013}.
Let $k \leq \ell \leq m \leq n$ be non-negative integers with $k + n = \ell + m$ and let $\vphi$ be a continuous function on $[0, \infty)$
that is $C^{\infty}$-smooth in $(0, \infty)$.
Define
$$  c_{k, \ell, m, n}(t) = a_t(\ell) a_t(m) - a_t(k) a_t(n) = (-1)^{\ell + m} \left[ \frac{\vphi^{(\ell)}(t)}{\ell!} \frac{\vphi^{(m)}(t)}{m!} - \frac{\vphi^{(k)}(t)}{k!} \frac{\vphi^{(n)}(t)}{n!}  \right], $$
where $a_t(n) = (-1)^n \vphi^{(n)}(t) / n!$ as before. We call the functions $c_{k, \ell, m, n}: (0, \infty) \rightarrow \RR$ the log-concavity measurements  of $\vphi$. This name is justified by the following little lemma. 
For integers $a \leq b$ we write $\llbracket a, b \rrbracket = \{ n \in \ZZ \, ; \, a \leq n \leq b \}$.

\begin{lemma} Let $t > 0$. Then the sequence $(a_t(n))_{n \geq 0}$ is log-concave if and only if $c_{k, \ell, m, n}(t) \geq 0$ for all non-negative integers
$k \leq \ell \leq m \leq n$  with $k + n = \ell + m$. \label{lem_1036}
\end{lemma}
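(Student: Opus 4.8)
The plan is to prove both directions of the equivalence, with the forward direction being essentially a definitional unpacking and the reverse direction requiring a little combinatorial argument.

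First I would handle the ``only if'' direction. Suppose $(a_t(n))_{n \geq 0}$ is log-concave. Fix non-negative integers $k \leq \ell \leq m \leq n$ with $k + n = \ell + m$. Since $\ell - k = n - m =: d \geq 0$, I can write $\ell = k + d$ and $n = m + d$, so it suffices to show $a_t(k+d) a_t(m+d) \geq a_t(k) a_t(m+2d)$ whenever $k \leq m$ (using $m = \ell$, so $k \leq m$ follows from $\ell \leq m$). If any of the relevant terms vanishes, one checks using the interval structure of the support of $(a_t(n))$ that the inequality still holds (if $a_t(k) = 0$ or $a_t(m+2d) = 0$ the right side is zero; if instead a middle term vanishes, then by the interval property the outer terms bracket it and also vanish). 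When all terms are positive, the inequality $a_t(k+d) a_t(m+d) \geq a_t(k) a_t(m+2d)$ follows from the standard fact that log-concavity of a positive sequence is equivalent to the ratio $a_t(j+1)/a_t(j)$ being non-increasing in $j$: telescoping gives $a_t(k+d)/a_t(k) = \prod_{j=k}^{k+d-1} a_t(j+1)/a_t(j) \geq \prod_{j=m+d}^{m+2d-1} a_t(j+1)/a_t(j) = a_t(m+2d)/a_t(m+d)$, since each factor on the left corresponds to a smaller index than the matching factor on the right (as $k \leq m$). Rearranging yields $c_{k,\ell,m,n}(t) \geq 0$.

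Next, the ``if'' direction. Suppose $c_{k,\ell,m,n}(t) \geq 0$ for all admissible quadruples. Taking $k = n-1, \ell = m = n$ (so $k + n = 2n = \ell + m$) gives $a_t(n)^2 \geq a_t(n-1) a_t(n+1)$ for every $n \geq 1$. To conclude log-concavity of the sequence it remains to verify the support is an interval of integers; this is where I expect the only real subtlety, modest as it is. If the support is not an interval, there exist integers $p < q < r$ with $a_t(p) > 0$, $a_t(r) > 0$ but $a_t(q) = 0$. Then choosing $k, \ell, m, n$ appropriately — for instance taking $\ell$ or $m$ equal to $q$ and $k, n$ straddling it with $k + n = \ell + m$ — forces $a_t(k) a_t(n) \leq a_t(\ell)a_t(m) = 0$, hence $a_t(k) a_t(n) = 0$; by iterating or choosing the straddling points to be the positive ones $p$ and $r$ one derives a contradiction. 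The cleanest route: since $c_{k,\ell,m,n} \geq 0$ with $\ell = m$ gives $a_t(\ell)^2 \geq a_t(\ell - j) a_t(\ell + j)$ for all valid $j$, a zero in the interior of the support propagates outward, contradicting positivity at $p$ and $r$. Together with the non-negativity of all $a_t(n)$ (immediate from $c_{0,0,n,n}(t) = -a_t(0)a_t(n) + a_t(0)a_t(n)$... — more directly, $a_t(n)^2 \geq a_t(n-1)a_t(n+1) \geq 0$, and $a_t(0), a_t(1) \geq 0$ need a separate trivial check, e.g. from $c_{0,0,1,1}$ giving $a_t(0)a_t(1) \geq$ nothing useful, so instead note $a_t(n) \geq 0$ must be assumed or derived from the representation), this completes the argument.

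I should be slightly careful about how non-negativity of the individual terms enters the definition of log-concave sequence in \eqref{eq_926}; the safest framing is to observe that the conditions $a_t(n)^2 \geq a_t(n-1) a_t(n+1)$ for all $n \geq 1$ \emph{plus} the interval-support property are, by the ``equivalently'' remark following \eqref{eq_926}, exactly the definition, so I only need these two facts. The main obstacle, then, is the bookkeeping in the ``if'' direction to show the support is an interval using only inequalities of the form $c_{k,\ell,m,n}(t) \geq 0$ — and this is a short argument once one fixes $\ell = m$ and lets the gap $2j$ vary.
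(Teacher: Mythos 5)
Your argument is correct and, in the ``if'' direction, essentially coincides with the paper's: the Tur\'an-type inequalities come from $c_{n-1,n,n,n+1}(t)\ge 0$, and the interval-support property is obtained by straddling a putative zero, i.e.\ for $p\le q\le r$ with $a_t(p),a_t(r)>0$ one sets $k=p$, $n=r$, $\{\ell,m\}=\{q,\,p+r-q\}$ and reads off $a_t(q)\,a_t(p+r-q)\ge a_t(p)\,a_t(r)>0$. Do keep that version rather than the one you call ``the cleanest route'': using only the $\ell=m$ measurements $a_t(\ell)^2\ge a_t(\ell-j)\,a_t(\ell+j)$ is genuinely insufficient, because the reflected zeros can overshoot both endpoints --- for instance the sequence $1,0,0,1,0,0,\dots$ satisfies every inequality of that form while its support is not an interval (the defect is detected only by the asymmetric measurement $c_{0,1,2,3}$). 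In the ``only if'' direction you telescope the non-increasing ratios $a_t(j+1)/a_t(j)$ and then patch the vanishing cases via the interval property; the paper instead multiplies the two instances of \eqref{eq_926} with $\lambda=(\ell-k)/(n-k)$, namely $a_t(\ell)\ge a_t(k)^{1-\lambda}a_t(n)^{\lambda}$ and $a_t(m)\ge a_t(k)^{\lambda}a_t(n)^{1-\lambda}$, which handles zeros automatically; both routes are routine and equivalent. Finally, your concern about non-negativity of the individual $a_t(n)$ in the ``if'' direction is well taken (the constant sequence $-1$ has all measurements equal to $0$ yet is not log-concave in the paper's sense), but the paper's own proof passes over the same point; in every application of the lemma the non-negativity is supplied by the surrounding context.
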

\begin{proof} Assume first that the sequence  $(a_t(n))_{n \geq 0}$ is log-concave. Fix non-negative integers $k \leq \ell \leq m \leq n$ with $k + n = \ell + m$, and let us prove that $c_{k,\ell, m, n}(t) \geq 0$. 
	This is obvious in the case where $k=\ell$ and $m=n$. Otherwise, set 
	$\lambda = (\ell-k) / (n - k) \in (0,1)$. 
	Then $\ell = \lambda n + (1-\lambda) k$ and $m = \lambda k + (1-\lambda) n$.
	According to (\ref{eq_926}), 
$$ a_t(\ell) \geq a_t(k)^{1-\lambda} a_t(n)^{\lambda} \qquad \text{and} \qquad a_t(m) \geq a_t(k)^{\lambda} a_t(n)^{1-\lambda}. $$	
By multiplying these two inequalities, we conclude that $c_{k, \ell, m, n}(t) \geq 0$. For the other direction, assume that the log-concavity measurements are non-negative. In particular, for any $n \geq 1$,
 $$ 0 \leq c_{n-1, n, n, n+1}(t) = a_t(n)^2 - a_t(n-1) a_t(n+1). $$
In remains to show that the set of 	
non-negative integers $n$ with $a_t(n) > 0$ is an interval of integers. Assume that $k \leq n$ satisfy $a_t(k) > 0$ and $a_t(n) > 0$.
Given any integer $\ell \in \llbracket k, n \rrbracket$, we set $m = n+k - \ell \in \llbracket k, n \rrbracket$. By the non-negativity of the log-concavity measurements,
\begin{equation} 
 a_t(\ell) a_t(m) \geq a_t(k) a_t(n) > 0. \label{eq_1037} \end{equation}
 From (\ref{eq_1037})  we deduce that $a_t(\ell) > 0$ for any integer $\ell \in \llbracket k, n \rrbracket$, as desired. \end{proof}

\begin{proposition} The derivative of each log-concavity measurement
	is a linear combination with constant, non-positive coefficients of
	a finite number of log-concavity measurements. \label{lem_1139}
\end{proposition}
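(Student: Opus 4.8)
The plan is to differentiate directly, exploiting the fact that a $t$-derivative acts almost trivially on the coefficients. From $a_t(n) = (-1)^n \vphi^{(n)}(t)/n!$ one gets $\frac{d}{dt} a_t(n) = (-1)^n \vphi^{(n+1)}(t)/n! = -(n+1)\, a_t(n+1)$, so differentiation merely raises the index by one, at the cost of the scalar $-(n+1)$. Feeding this into the product rule for $c_{k,\ell,m,n}(t) = a_t(\ell)a_t(m) - a_t(k)a_t(n)$ gives
\[
c_{k,\ell,m,n}'(t) = -(\ell+1)\,a_t(\ell+1)a_t(m) - (m+1)\,a_t(\ell)a_t(m+1) + (k+1)\,a_t(k+1)a_t(n) + (n+1)\,a_t(k)a_t(n+1),
\]
an integer linear combination of four products $a_t(p)a_t(q)$, each with $p+q = \ell+m+1 = k+n+1$; the first two products are the ``more central'' ones and the last two the ``more extreme''.

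Next I would record the elementary observation that a difference of two such products, central minus extreme, is again a log-concavity measurement: if $p' \leq p \leq q \leq q'$ and $p'+q' = p+q$, then $a_t(p)a_t(q) - a_t(p')a_t(q') = c_{p',p,q,q'}(t)$. When $\ell = k$ (equivalently $n = m$) we have $c_{k,\ell,m,n}\equiv 0$ and the statement is vacuous, so one may assume $\ell - k = n - m \geq 1$, which forces $k < \ell \leq m < n$ and thereby makes all index tuples below non-negative and correctly ordered.

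The substance of the proof is choosing the coefficients. A naive grouping of the four products into two log-concavity measurements fails, because the weights $\ell+1$ and $k+1$ (resp.\ $m+1$ and $n+1$) on the two central/extreme pairs do not match; the discrepancy is exactly $\ell - k = n - m$, and this is precisely where the hypothesis $k+n = \ell+m$ is used. The fix is to use three log-concavity measurements,
\[
c^{(1)} = a_t(\ell+1)a_t(m) - a_t(k+1)a_t(n), \quad c^{(2)} = a_t(\ell)a_t(m+1) - a_t(k)a_t(n+1), \quad c^{(3)} = a_t(\ell+1)a_t(m) - a_t(k)a_t(n+1),
\]
and to verify, by comparing the coefficient of each of the four products on both sides, the identity
\[
c_{k,\ell,m,n}'(t) = -(k+1)\, c^{(1)} - (m+1)\, c^{(2)} - (\ell-k)\, c^{(3)}.
\]
All three coefficients $-(k+1),\,-(m+1),\,-(\ell-k)$ are non-positive constants, which is the claim.

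The only obstacle, and it is a modest one, is the combinatorial bookkeeping in this last step: one must check that $c^{(1)},c^{(2)},c^{(3)}$ are genuinely of the form $c_{k',\ell',m',n'}$ with $k'\leq\ell'\leq m'\leq n'$ and $k'+n'=\ell'+m'$. The sole subtlety is that when $\ell = m$ the two middle indices $\ell+1$ and $m$ occur out of order and must be transposed; since a log-concavity measurement is symmetric in its two middle arguments, this is harmless. A quick check on a small case (e.g.\ $c_{0,1,1,2}'(t) = -3\,c_{0,1,2,3}(t)$) confirms the formula.
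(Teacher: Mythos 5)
Your proof is correct and is essentially the paper's argument: the identity $c_{k,\ell,m,n}'=-(k+1)c^{(1)}-(\ell-k)c^{(3)}-(m+1)c^{(2)}$ is exactly the three-term decomposition used in the paper for the case $\ell<m$. The only (harmless) difference is that for $\ell=m$ the paper merges two of the products into a two-term decomposition, while you keep the same three terms and reorder the middle indices using the symmetry $c_{k',\ell',m',n'}=c_{k',m',\ell',n'}$, which is a valid alternative.
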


\begin{proof}
	Differentiating (\ref{eq_1411}) we  obtain
	$$ \frac{d}{dt} a_t(n) = -(n+1) a_t(n+1) \qquad \qquad \qquad (t > 0, n \geq 0). $$
	Abbreviate $b_j = a_t(j)$. Then,
	\begin{equation} -c_{k, \ell, m, n}^{\prime}(t) = (\ell + 1) b_{\ell+1} b_{m} + (m + 1) b_{\ell} b_{m+1} - (k + 1) b_{k+1} b_{n} - (n+1) b_{k} b_{n+1}. \label{eq_1127}
	\end{equation}
	Assume first that $\ell < m$. In this case we may rewrite the right-hand side of (\ref{eq_1127}) as
	$$ (k+1) \left[ b_{\ell+1} b_m - b_{k+1} b_n \right] + (\ell-k) \left[ b_{\ell+1} b_m - b_{k} b_{n+1} \right] + (m+1) \left[ b_{\ell} b_{m+1} - b_{k} b_{n+1} \right]. $$
	Therefore, in the case $\ell < m$, we expressed $-c_{k, \ell, m, n}^{\prime}(t)$ as a linear combination with non-negative coefficients of three log-concavity measurements.
	From now on, we consider the case $\ell = m$.
	If $k = \ell$, then necessarily $n = m$ and the log-concavity measurement  $c_{k, \ell, m, n}(t)$ vanishes.
	If $k < \ell$, then necessarily $m < n$ and we rewrite the right-hand side of (\ref{eq_1127}) as
	$$ (k+1) \left[ b_{\ell} b_{m+1} - b_{k+1} b_n \right] + (n+1) \left[ b_{\ell} b_{m+1} - b_{k} b_{n+1} \right]. $$
	Consequently, in the case $\ell = m$, we may express $-c_{k, \ell, m, n}^{\prime}(t)$ as a linear combination with non-negative coefficients of two log-concavity measurements.
	The proof is complete.
\end{proof}

\begin{corollary} Let $t > 0$ be such that $(a_{t}(n))_{n \geq 0}$ is a log-concave sequence.
	Assume that $k \leq \ell \leq m \leq n$ are non-negative integers with $k + n = \ell + m$. Abbreviate
	$f(t) = c_{k, \ell, m, n}(t)$. Then for all $j \geq 0$,
	$$ (-1)^j f^{(j)}(t) \geq 0. $$ \label{cor_1140}
\end{corollary}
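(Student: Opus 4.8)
The plan is to prove the statement by induction on $j$. The base case $j = 0$ is precisely the hypothesis: $f(t) = c_{k, \ell, m, n}(t) \geq 0$, which holds because the sequence $(a_t(n))_{n \geq 0}$ is log-concave, by Lemma \ref{lem_1036}. For the inductive step, the key observation is that Proposition \ref{lem_1139} lets us control the derivative of a log-concavity measurement in terms of other log-concavity measurements evaluated at the \emph{same} point $t$.

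First I would note that since $(a_t(n))_{n \geq 0}$ is log-concave, Lemma \ref{lem_1036} gives $c_{k', \ell', m', n'}(t) \geq 0$ for \emph{every} admissible quadruple $(k', \ell', m', n')$, not just the given one. This is the crucial uniformity: the whole family of functions $\{ c_{k', \ell', m', n'} \}$, ranging over all non-negative integers $k' \leq \ell' \leq m' \leq n'$ with $k' + n' = \ell' + m'$, satisfies the property ``$(-1)^j g^{(j)}(t) \geq 0$'' simultaneously, so it suffices to prove the statement for this entire family at once. Precisely, I would prove by induction on $j$ the statement: for every admissible quadruple $(k', \ell', m', n')$, one has $(-1)^j c_{k', \ell', m', n'}^{(j)}(t) \geq 0$.

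For the inductive step, suppose the claim holds for $j$ (for all admissible quadruples). Fix an admissible quadruple and write $f = c_{k', \ell', m', n'}$. By Proposition \ref{lem_1139}, $f'(t) = \sum_i (-\gamma_i) c_{q_i}(t)$ where each $\gamma_i \geq 0$ is a constant and each $q_i$ is an admissible quadruple. Differentiating $j$ times and using linearity, $f^{(j+1)}(t) = -\sum_i \gamma_i c_{q_i}^{(j)}(t)$. Multiplying by $(-1)^{j+1}$ gives $(-1)^{j+1} f^{(j+1)}(t) = \sum_i \gamma_i \cdot (-1)^{j} c_{q_i}^{(j)}(t) \geq 0$, where each summand is non-negative because $\gamma_i \geq 0$ and, by the inductive hypothesis applied to the admissible quadruple $q_i$, $(-1)^{j} c_{q_i}^{(j)}(t) \geq 0$. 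This completes the induction.

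There is essentially no hard part here; the work has been front-loaded into Lemma \ref{lem_1036} and Proposition \ref{lem_1139}. The only point requiring a moment of care is making the induction range over the full family of log-concavity measurements rather than a single one, so that the inductive hypothesis is available for the quadruples $q_i$ produced by Proposition \ref{lem_1139}; this is automatic once one invokes Lemma \ref{lem_1036} to get non-negativity of $c_{q}(t)$ for all admissible $q$ from the single hypothesis that $(a_t(n))_{n\geq 0}$ is log-concave. One should also observe that all derivatives exist and the computations are valid on $(0,\infty)$ since $\vphi$ is $C^\infty$ there, and that the finite linear combination in Proposition \ref{lem_1139} may be differentiated term by term.
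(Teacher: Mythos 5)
Your proof is correct and is essentially the paper's own argument: the paper likewise iterates Proposition \ref{lem_1139} to write $(-1)^j f^{(j)}$ as a non-negative linear combination of log-concavity measurements and then invokes their non-negativity at $t$ (Lemma \ref{lem_1036}). Your explicit induction over the whole family of admissible quadruples is just a more careful write-up of the same idea.
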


\begin{proof} Any log-concavity measurement is non-negative at any $t > 0$.
	It follows from Proposition \ref{lem_1139} that $(-1)^j f^{(j)}(t)$ is a finite linear combination with non-negative coefficients
	of certain log-concavity measurements. Therefore $(-1)^j f^{(j)}(t) \geq 0$.
\end{proof}

\begin{proof}[Proof of Proposition \ref{prop_1414}] Write $A \subseteq (0, \infty)$
	for the set of all $t > 0$ for which $(a_t(n))_{n \geq 0}$ is a log-concave
	sequence. Since $\vphi$ is $C^{\infty}$-smooth, the set $A$ is closed in $(0, \infty)$.
	From our assumption, $s \in A$. Define
	$$ t_0 = \inf \left \{ t > 0 \, ; \, [t, s] \subseteq A \right \}. $$
	Then $t_0 \leq s$. Our goal is to prove that $t_0 = 0$.
	Assume by contradiction that $t_0 > 0$. Since $A$ is a closed set, necessarily $t_0 \in A$.
	Since $\vphi$ is real-analytic, the Taylor series of $\vphi$
	converges to $\vphi$ in $(t_0 - \eps, t_0 + \eps)$ for a certain $\eps > 0$. Assume that $k \leq \ell \leq m \leq n$ are non-negative integers with $k + n = \ell + m$.
	Then also the Taylor series of $f(t) = c_{k, \ell, m, n}(t)$ converges to $f$ in the same interval $(t_0 - \eps, t_0 + \eps)$.
	From Corollary \ref{cor_1140} we thus deduce that for all $t \in (t_0 - \eps, t_0]$,
	$$ c_{k, \ell, m, n}(t) \geq 0. $$
	Consequently, $(t_0 - \eps, t_0] \subseteq A$, in contradiction to the definition of $t_0$.
\end{proof}

We proceed with yet another proof of Proposition \ref{prop_1414}, which is more in the spirit of the Walkup theorem which we shall now recall:

\begin{theorem}[Walkup theorem \cite{NO, Walkup}]
	If $(a_n)_{n \geq 0}$ and $(b_n)_{n \geq 0}$ are log-concave sequences, then the sequence $(c_n)_{n \geq 0}$
	given by
	\[
	c_n = \sum_{k=0}^n {n\choose k} a_k b_{n-k} , \qquad \qquad (n \geq 0)
	\]
	is also log-concave. \label{thm_walkup}
\end{theorem}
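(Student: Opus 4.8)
The Walkup theorem has two parts: that the support of $(c_n)_{n\ge 0}$ is an interval of integers, and that $c_n^2 \ge c_{n-1}c_{n+1}$ for every $n \ge 1$. The first part is elementary. If $(a_k)$ and $(b_k)$ are not identically zero, write their supports as $\llbracket i_0, i_1 \rrbracket$ and $\llbracket j_0, j_1 \rrbracket$; then $c_n = 0$ for $n \notin \llbracket i_0+j_0,\, i_1+j_1 \rrbracket$, while for $n$ in this range one may choose $k$ with $k \in \llbracket i_0, i_1 \rrbracket$ and $n-k \in \llbracket j_0, j_1 \rrbracket$, so that the single term $\binom nk a_k b_{n-k}$ is already positive and all other terms are non-negative; hence $\mathrm{supp}(c) = \llbracket i_0+j_0,\, i_1+j_1 \rrbracket$ is an interval. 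The whole content is therefore the inequality $c_n^2 \ge c_{n-1}c_{n+1}$, where we may assume $c_{n-1}, c_n, c_{n+1} > 0$; and since for a fixed $n$ these are finite sums unchanged if we truncate $(a_k)$ and $(b_k)$ beyond index $n+1$, we may also assume both sequences are finitely supported.

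The first thing I would try is to imitate the one-line proof that \emph{ordinary} convolution preserves log-concavity. Expanding,
\[
c_n^2 - c_{n-1}c_{n+1} \;=\; \sum_{i,j} a_i a_j \Big( \binom ni\binom nj\, b_{n-i}b_{n-j} \;-\; \binom{n-1}{i}\binom{n+1}{j}\, b_{n-1-i}b_{n+1-j}\Big),
\]
one symmetrises in $i \leftrightarrow j$ and asks whether each resulting bracket is non-negative. Were the binomial coefficients absent, this would succeed at once, because $2b_p b_q \ge b_{p-1}b_{q+1} + b_{p+1}b_{q-1}$ holds for every log-concave sequence $(b_k)$ (of interval support) and all $p,q$. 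With the binomial weights present the symmetrised bracket instead carries a surplus factor of the shape $\tfrac{(n+1)p}{n(q+1)}$, which is not controlled term by term — indeed individual brackets can be strictly negative. I expect this to be the crux: it is exactly the point where ordinary log-concavity of the inputs is insufficient and the finer arithmetic of the binomial coefficients must enter, and it is what makes the statement genuinely deeper than its Cauchy-convolution analogue.

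To get past this one must bound $c_n^2 - c_{n-1}c_{n+1}$ globally rather than term by term. One can follow Walkup's original argument, or the treatment in \cite{NO}: the double sum is regrouped into larger blocks, log-concavity of both $(a_k)$ and $(b_k)$ is used simultaneously, and binomial identities (Vandermonde's convolution $\sum_k\binom Mk\binom N{m-k} = \binom{M+N}m$ among them) are what absorb the surplus binomial factors; an induction on the length of the shorter sequence is a further option. A variant closer to the methods of the present paper is to use $\sum_m c_m \tfrac{x^m}{m!} = \big(\sum_k a_k\tfrac{x^k}{k!}\big)\big(\sum_k b_k\tfrac{x^k}{k!}\big)$ together with the representation $\E[a_{N_t}b_{M_t}] = e^{-2t}\sum_m c_m t^m/m!$ for independent Poisson variables $N_t, M_t$ of parameter $t$; but turning log-concavity of $(a_k)$ and $(b_k)$ into the conclusion along this route still appears to require the same combinatorial input, so I would not expect it to shorten the proof. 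Since a complete argument is in any case available in \cite{NO, Walkup}, it is legitimate simply to cite it.
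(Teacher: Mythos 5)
The paper does not actually prove this theorem: it is quoted as a known result with the citation \cite{NO, Walkup} built into its statement (the paper instead proves the related Proposition on $c_k=\sum_n \binom{n}{k}a_n$, noting that neither formally implies the other), so your ultimate decision to defer to the references is exactly the paper's treatment. Your preliminary reductions (interval support of $(c_n)$, and the correct diagnosis that term-by-term symmetrisation fails because of the binomial weights) are sound, and nothing further is required to match what the paper does.
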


By Taylor's theorem, whenever $0 < s < t$,
$$ (t -s)^k a_s(k) = \sum_{n = k}^{\infty}  {n\choose k} (t - s)^n a_t(n), $$
assuming that $\vphi$ is real-analytic and that the Taylor series of $\vphi$ at $t$ converges in $(s - \eps, t+\eps)$ for some $\eps > 0$.
We conclude that Proposition \ref{prop_1414} is equivalent
to the following Walkup-type result:

\begin{proposition}
	If $(a_k)_{k \geq 0}$ is a log-concave sequence then the sequence $(c_k)_{k \geq 0}$
	defined by
	\[
	c_k = \sum_{n =k}^{\infty} {n\choose k} a_n , \qquad \qquad (k \geq 0)
	\]
	is log-concave as well. \label{prop_1710}
\end{proposition}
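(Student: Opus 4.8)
The plan is to reduce to the two-term inequality $c_k^2\ge c_{k-1}c_{k+1}$ and prove it by a quadratic-form decomposition along anti-diagonals, in the style of proofs of Walkup-type statements. First I would truncate $(a_k)$ to its first $N$ terms, which stays log-concave, and pass to the limit $N\to\infty$ only at the very end: since each $c_k$ is the increasing limit of the corresponding truncated sum, it suffices to treat the case of finite support $\llbracket p,q\rrbracket$. In that case $c_k>0$ for $0\le k\le q$ (already the summand $n=q$ is positive) and $c_k=0$ for $k>q$, so $(c_k)$ has no internal zeros and only the inequality $c_k^2\ge c_{k-1}c_{k+1}$ $(k\ge1)$ remains.

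Expanding and symmetrizing gives $2\bigl(c_k^2-c_{k-1}c_{k+1}\bigr)=-\sum_{m,n\ge0}\Delta(m,n)\,a_m a_n$, where
\[
\Delta(m,n)=\binom{m}{k-1}\binom{n}{k+1}+\binom{m}{k+1}\binom{n}{k-1}-2\binom mk\binom nk .
\]
It is enough to show $\sum_{m+n=s}\Delta(m,n)a_m a_n\le0$ for each $s$. Two facts about $\Delta$ on the anti-diagonal $m+n=s$ are needed. By the Vandermonde--Chu identity $\sum_{m+n=s}\binom ma\binom nb=\binom{s+1}{a+b+1}$, each of the three binomial sums over $m+n=s$ equals $\binom{s+1}{2k+1}$, hence $\sum_{m+n=s}\Delta(m,n)=0$. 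Secondly, $m\mapsto\Delta(m,s-m)$ is \emph{sign-unimodal}: symmetric under $m\mapsto s-m$, nonnegative on an initial and a final stretch of indices, and nonpositive in between. To check this, divide $\Delta(m,s-m)$ by $\binom mk\binom{s-m}k>0$ on the range $k\le m\le s-k$; with $p=m-k$ and $q=s-2k-p$ the quotient becomes $\tfrac{k}{k+1}\bigl(\tfrac{q}{p+1}+\tfrac{p}{q+1}\bigr)-2$, which a short derivative computation shows is strictly decreasing then strictly increasing in $p\in\{0,\dots,s-2k\}$, symmetrically about $p=(s-2k)/2$; at the two boundary indices $m=k-1,\ s-k+1$ one computes $\Delta(m,s-m)=\binom{s-k+1}{k+1}\ge0$; and outside $\llbracket k-1,s-k+1\rrbracket$ it vanishes. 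Together these yield the stated sign pattern.

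Log-concavity of $(a_k)$ makes $m\mapsto a_m a_{s-m}$ unimodal and symmetric about $m=s/2$, so each superlevel set $I_t=\{m:a_m a_{s-m}>t\}$ is an interval symmetric about $s/2$. Applying the layer-cake identity $a_m a_{s-m}=\int_0^\infty\mathbbm{1}_{I_t}(m)\,dt$ gives
\[
\sum_{m+n=s}\Delta(m,n)a_m a_n=\int_0^\infty\Bigl(\ \sum_{m\in I_t}\Delta(m,s-m)\ \Bigr)\,dt ,
\]
so I only need $\sum_{m\in I}\Delta(m,s-m)\le0$ for every interval $I$ symmetric about $s/2$. Since the full anti-diagonal sum is $0$ and $\Delta(\cdot,s-\cdot)$ is symmetric, this is the same as the partial sums $\sum_{m<a}\Delta(m,s-m)$ being $\ge0$ for all $a\le\lceil s/2\rceil$, which follows from sign-unimodality together with the vanishing total sum: the partial sum rises while $\Delta\ge0$, falls while $\Delta\le0$, and is $\ge0$ at the centre (it equals $0$ for $s$ odd and $-\tfrac12\Delta(s/2,s/2)\ge0$ for $s$ even). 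Summing the anti-diagonal inequalities over $s$ yields $c_k^2\ge c_{k-1}c_{k+1}$, and the proof is complete.

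The delicate point is the sign-unimodality of $m\mapsto\Delta(m,s-m)$: it hinges on the precise ``U-shape'' of the function $\tfrac{k}{k+1}\bigl(\tfrac{q}{p+1}+\tfrac{p}{q+1}\bigr)-2$ and on its (pair of) sign changes sitting symmetrically about $s/2$---which is exactly what lets the partial-sum inequality run all the way to the central index and so match up with the peak of $m\mapsto a_m a_{s-m}$ coming from log-concavity.
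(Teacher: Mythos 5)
Your proof is correct, and structurally it is the same as the paper's: both reduce $c_k^2-c_{k-1}c_{k+1}\ge 0$ to an inequality on each anti-diagonal $m+n=s$, and both handle the anti-diagonal by the layer-cake decomposition of the symmetric, unimodal sequence $m\mapsto a_m a_{s-m}$, so that everything comes down to showing that a fixed signed kernel has nonpositive sum over every integer interval symmetric about $s/2$. The two arguments part ways only in how that last fact is verified. The paper interprets the kernel probabilistically: $\binom{m}{k}\binom{s-m}{k}/\binom{s+1}{2k+1}$ is (up to a shift) the law of the $(k+1)$-st order statistic $X_{k+1}$ of a uniform random $(2k+1)$-subset of $\llbracket 1,s+1\rrbracket$, and since $X_k\le X_{k+1}$ pointwise, the difference of the two interval probabilities telescopes to a single pair of boundary terms whose sign is settled by one ratio computation. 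You instead prove that $\Delta(\cdot,s-\cdot)$ is sign-unimodal and has total sum zero (via Vandermonde--Chu, which is exactly the paper's normalizing constant $\binom{s+1}{2k+1}$), and deduce nonnegativity of the partial sums up to the centre. This is a de-probabilized version of the same mechanism: it costs you the somewhat more delicate analysis of the U-shape of $\tfrac{k}{k+1}\bigl(\tfrac{q}{p+1}+\tfrac{p}{q+1}\bigr)-2$ and of the boundary indices $m=k-1,\,s-k+1$, and it buys you independence from the order-statistics interpretation; the checks you outline (monotonicity of the quotient on either side of $p=(s-2k)/2$, the value $\binom{s-k+1}{k+1}$ at the boundary, and the central partial sum being $0$ or $-\tfrac12\Delta(s/2,s/2)\ge 0$) are all correct.
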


We do not know of a formal derivation of Theorem \ref{thm_walkup}
from Proposition \ref{prop_1710} or vice versa, yet we provide a direct proof of Proposition \ref{prop_1710} which bears
some similarity to the proof of Walkup's theorem and the Borell-Berwald inequality given in \cite{NO, Walkup}. 
We begin the direct proof of Proposition \ref{prop_1710} with the following:

\begin{lemma}
	Let $(a_n)_{n \geq 0}$ be a log-concave sequence. Then for
	every non-negative integers $k$ and $l$ we have
	\begin{equation}
	\sum_{n \geq 0} {n\choose k}{l-n\choose k} a_n a_{l-n}
	\geq \sum_{n \geq 0} {n\choose k-1}{l-n\choose k+1} a_n a_{l-n},
	\label{eq_201} \end{equation}
	where here we set ${n \choose k} = 0$ in the case where $k > n$ or $k < 0$ or $n < 0$.
\end{lemma}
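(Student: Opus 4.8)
The plan is to prove that the difference of the two sides is non-negative. The case $k=0$ is trivial (the right-hand side then vanishes), so assume $k\ge 1$ and write the difference as
$$\Delta := \sum_{n} w_n\, a_n a_{l-n}, \qquad w_n := \binom{n}{k}\binom{l-n}{k} - \binom{n}{k-1}\binom{l-n}{k+1}.$$
Set $p_n := a_n a_{l-n}$. The sequence $(p_n)$ is \emph{symmetric}, $p_n = p_{l-n}$, and it is a \emph{log-concave} sequence, since $\log p_n = \log a_n + \log a_{l-n}$ is concave in $n$; a symmetric log-concave sequence is unimodal with mode at (or straddling) $l/2$, so $p_n$ is a non-increasing function of the distance $|n-l/2|$ to the centre. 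Since $(p_n)$ is symmetric, we may replace $w_n$ by its symmetrization $\tilde w_n := \tfrac12(w_n + w_{l-n})$ without changing $\Delta$; from now on $\tilde w_n = \tilde w_{l-n}$.

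The argument rests on two elementary facts about $(\tilde w_n)$. First, its total mass vanishes: since $k\ge 1$, the Vandermonde convolution identity $\sum_n \binom{n}{a}\binom{l-n}{b} = \binom{l+1}{a+b+1}$ (for $a,b\ge 0$) gives $\sum_n \binom{n}{k}\binom{l-n}{k} = \binom{l+1}{2k+1} = \sum_n \binom{n}{k-1}\binom{l-n}{k+1}$, hence $\sum_n \tilde w_n = 0$. Second, $(\tilde w_n)$ has a single central block of non-negativity: there is a symmetric interval of indices outside of which $\tilde w_n \le 0$ and inside of which $\tilde w_n \ge 0$. Indeed $\tilde w_m \le 0$ when $m<k$ or $m>l-k$, since then one of the leading products $\binom{m}{k}\binom{l-m}{k}$ already vanishes while the subtracted terms are non-negative; and for $k\le m\le l-k$ one has the factorisation
$$\tilde w_m = \binom{m}{k}\binom{l-m}{k}\left(1 - \frac{k}{2(k+1)}\left(\frac{l-m-k}{m-k+1} + \frac{m-k}{l-m-k+1}\right)\right).$$
Writing $s=m-k$ and $t=l-m-k$ (so $s+t=l-2k$ is held fixed), a one-variable derivative computation shows that $\frac{t}{s+1}+\frac{s}{t+1}$ is strictly decreasing in $s$ while $s<t$ and strictly increasing while $s>t$; hence the parenthesised factor is first non-decreasing and then non-increasing in $m$, and it is strictly positive at the centre $m\approx l/2$, where it equals $1$ minus a product of two numbers smaller than $1$. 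Consequently $\{m:\tilde w_m\ge 0\}$ is a central interval, as claimed. (When $l<2k$ this range of $m$ is empty, $(\tilde w_n)$ vanishes identically, and both sides of the asserted inequality are $0$; so we may also assume $l\ge 2k$.)

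To conclude, combine the two facts by a layer-cake argument. For $r\ge 0$ let $S(r):=\sum_{n:\,|n-l/2|\le r}\tilde w_n$. By the single-block property, $S(r)$ is non-decreasing in $r$ as long as the window stays inside the positive block and non-increasing thereafter, so its minimum over $r$ is attained at one of the two extremes: at the smallest window, where $S$ equals the value(s) of $\tilde w$ nearest the centre and is therefore $\ge 0$; or as $r\to\infty$, where $S=\sum_n\tilde w_n=0$. Hence $S(r)\ge 0$ for every $r$. Writing the symmetric unimodal sequence $(p_n)$ as a non-negative superposition of indicators of symmetric windows, $p_n=\sum_{r\ge |n-l/2|}\delta_r$ with all $\delta_r\ge 0$, we obtain
$$\Delta = \sum_n \tilde w_n\, p_n = \sum_r \delta_r\, S(r) \ge 0,$$
which is the assertion. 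The step requiring the most care is the sign analysis of $(\tilde w_n)$, i.e.\ establishing that it has exactly one central block on which it is non-negative; the remaining ingredients (the Vandermonde identity, the unimodality of $(p_n)$, and handling the degenerate cases --- $k=0$, $l<2k$, or $(a_n)$ with zero entries) are routine.
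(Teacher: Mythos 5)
Your proof is correct. It shares the paper's overall architecture --- both arguments exploit that $p_n=a_na_{l-n}$ is symmetric about $l/2$ and log-concave, hence a non-negative combination of indicators of central symmetric windows, so that everything reduces to showing the symmetrized weight difference has non-negative sum over every window $\{|n-l/2|\le r\}$ --- but the two proofs establish that window-sum inequality by genuinely different means. The paper interprets $\binom{n}{k}\binom{l-n}{k}/\binom{l+1}{2k+1}$ as the law of the $(k+1)$-st order statistic $X_{k+1}$ of a uniform random $(2k+1)$-subset of $\{1,\dots,l+1\}$, so the equality of total masses is automatic and the window inequality becomes $\PP(X_k\in I)\le\PP(X_{k+1}\in I)$, which is proved by a short telescoping using $X_k\le X_{k+1}$ and a single ratio of binomial coefficients. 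You instead work directly with the symmetrized weights $\tilde w_m$: the Vandermonde identity gives total mass zero, and your factorization together with the monotonicity of $t/(s+1)+s/(t+1)$ shows $\tilde w$ is non-negative exactly on a central block and non-positive outside, from which $S(r)\ge 0$ follows since $S$ increases, then decreases to its limit $0$. Your route is purely algebraic and self-contained (no probabilistic reinterpretation), at the cost of a somewhat heavier computation (the explicit factorization and the one-variable convexity analysis); the paper's route buys a cleaner bookkeeping of the window sums but requires setting up the order-statistics model. I checked your factorization of $\tilde w_m$, the sign analysis at the boundary ranges $m<k$ and $m>l-k$, the derivative computation, and the degenerate cases $k=0$ and $l<2k$; all are sound.
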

\begin{proof} Inequality (\ref{eq_201}) holds trivially if $2k > l$. We may 
	thus assume that $2 k \leq l$. 	Let $U$ be a random
	subset of cardinality $2k+1$ of $\{1,\dotsc,l+1\}$ chosen
	uniformly. Let
	$X_1, \dotsc , X_{2k+1}$ be the elements of $U$ in increasing order.
	Observe that the law of $X_{k+1}$ is given by
	\[
	\PP ( X_{k+1} = n+1 ) = \frac { {n \choose k} {l-n \choose k}}{{ l+1 \choose 2k+1 }} , \qquad (n \geq 0).
	\]
	Therefore
	\[
	\sum_{n} {n\choose k}{l-n\choose k} a_n a_{l-n} = { l+1 \choose 2k+1 } \, \E [  f ( X_{k+1} ) ]
	\]
	where $f$ is the function given by
	\[
	f ( n ) = a_{n-1} a_{l+1-n} , \quad \forall n,
	\]
	and we set $a_k = 0$ for $k < 0$. In a similar way
	\[
	\sum_{n} {n\choose k-1}{l-n\choose k+1} a_n a_{l-n} = { l+1 \choose 2k+1 } \, \E [  f ( X_{k} ) ].
	\]
	Hence the desired inequality boils down to 
	\[
	\E [  f ( X_{k} ) ]
	\leq \E [ f ( X_{k+1} ) ] .
	\]
	By Fubini it suffices to prove that
	\[
	\PP (   f ( X_{k} ) > t )
	\leq \PP (  f ( X_{k+1} ) > t  ) \qquad \qquad \forall t \geq 0. 
	\]
	The sequence $(f(n))_{n \geq 0}$ is log-concave, since it is the pointwise product of two log-concave sequences.
	It also satisfies
	$f ( l+2 - n ) = f(n)$ for all $n$. The crucial observation is that 
	because of
	the log--concavity and symmetry of the sequence $(f(n)))_{n \geq 0}$, the farther $n$
is	from the midpoint $(l+2)/2$, the smaller $f(n)$.
	Therefore the level set $\{f>t\}$ is either empty
	or else an interval of the form $\llbracket n , l+2-n\rrbracket$
	for some integer $n \leq (l+2)/2$.
	Hence it suffices to prove that for every such $n$,
	\begin{equation}
	\PP (  X_k \in \llbracket n , l+2-n\rrbracket )
	\leq \PP ( X_{k+1} \in \llbracket n, l+2-n\rrbracket ). \label{eq_221}
	\end{equation}
	Intuitively, since $X_{k+1}$ is the middle
	element of $U$, it is more likely to be close to the center of the interval
	$\llbracket 1 , l+1\rrbracket$ than any other element.
	More precisely, since $X_k\leq X_{k+1}$,
	\[
	\begin{split}
	& \PP ( X_k \in \llbracket n,l+2-n \rrbracket ) - \PP ( X_{k+1} \in \llbracket n,l+2-n\rrbracket ) \\
	& = \PP ( X_k \leq l+2-n ; \; X_{k+1} > l+2-n ) - \PP ( X_k < n ;\;  X_{k+1} \geq n ) \\
	& = \frac{ {l+2-n \choose k} {n-1 \choose k+1} }{{l+1 \choose 2k+1}}
	- \frac{ {n-1\choose k} {l+2-n \choose k+1} }{{ l+1 \choose 2k+1 }} .
	\end{split}
	\]
	In order to complete the proof of (\ref{eq_221}) we need to show that this expression is non-positive, assuming 
	that $k \leq l/2$ and $n \leq (l +2) / 2$.  Note that 
	\begin{equation}
	\frac{ {l+2-n \choose k} {n-1 \choose k+1} }{  {n-1\choose k} {l+2-n \choose k+1}  }
	= \frac{(n-1-k)! (l-n-k+1)!}{(l+2-n-k)! (n-k-2)!} =
	\frac{ (n-1) - k}{(l+2-n) - k}.
	\label{eq_214} \end{equation}
	We need to show that the expression in (\ref{eq_214}) is at most one. 
	The denominator in (\ref{eq_214}) is positive, as 
	$$ l + 2 - n - k = 1 + [(l+2)/2 - n] + (l/2 - k) \geq 1. $$
	The numerator in (\ref{eq_214}) is smaller than the denominator, as $n-1 < l + 2 - n$. 
	Hence the expression in (\ref{eq_214}) is at most one, completing the proof of the lemma.
\end{proof}
\begin{proof}[Direct proof of Proposition \ref{prop_1710}:]
	The set of all $k$ with $c_k > 0$ 
	is clearly the interval of integers
	$\{ k \geq 0 \, ; \, \exists n \geq k, a_n > 0 \}$. Let $k \geq 0$ be an integer. We need to prove that
	\[ c_k^2 = 
	\sum_{n,m} {n\choose k}{m\choose k} a_n a_m
	\geq \sum_{n,m} {n\choose k-1}{m\choose k+1} a_n a_m = c_{k-1} c_{k+1}.
	\]
	By grouping the terms according to the value of $n+m$
	we see that it suffices to prove that
	for any $l, k \geq 0$,
	\[
	\sum_{n} {n\choose k}{l-n\choose k} a_n a_{l-n}
	\geq \sum_{n} {n\choose k-1}{l-n\choose k+1} a_n a_{l-n}.
	\]
	This is, however, precisely the statement of the previous lemma.
\end{proof}

When $\mu$ is a finite, log-concave measure on $[0, \infty)$, it is well-known (e.g., \cite{BGVV}) that $\mu([t, \infty)) \leq A \e^{-B t}$ for all $t > 0$, where $A, B > 0$ depend only on $\mu$.
It follows that the Laplace transform $\vphi$ defined in (\ref{eq_1027}) is holomorphic in  $\{ t \in \CC \, ; \, \re(t) > -B \}$
for some $B > 0$ depending on $\mu$.

\begin{proof}[Proof of Theorem \ref{cor_1013}] 
	By Theorem \ref{thm_1030}, the alternating Taylor coefficients sequence $(a_t(n))_{n \geq 0}$ is log-concave
	for any $t > 0$. From Corollary \ref{cor_1140} we thus learn that 
	$$ f(t) = c_{k, \ell, m, n}(t) = a_t(\ell) a_t(m) - a_t(k) a_t(n)  $$
	satisfies $(-1)^j f^{(j)}(t) \geq 0$ for any $t > 0$ and $j \geq 0$. 
	The function $f$ is real-analytic in a neighborhood of $[0, \infty)$
	and in particular it is continuous in $[0, \infty)$.
		The function $f$ is thus completely-monotone, and according to the Bernstein theorem,
	there exists a finite, non-negative measure $\nu$ for which (\ref{eq_305}) holds true.
\end{proof}

We may rewrite conclusion (\ref{eq_305}) of Theorem \ref{cor_1013} as follows: For any $t > 0$,
$$ \int_0^{\infty} \frac{x^\ell}{\ell!} e^{-tx} d \mu(x)
\int_0^{\infty} \frac{x^m}{m!} e^{-tx} d \mu(x) - \int_0^{\infty} \frac{x^k}{k!} e^{-tx} d \mu(x)
\int_0^{\infty} \frac{x^n}{n!} e^{-tx} d \mu(x) = \int_0^{\infty} e^{-tx} d \nu(x). $$
Let us now consider the Fourier transform
$$ F_{\mu}(t) = \int_0^{\infty} e^{-i tx} d \mu(x) \qquad \qquad \qquad (t \in \RR). $$
By analytic continuation, Theorem \ref{cor_1013} immediately implies the following:

\begin{proposition} Let $\mu$ be a finite, log-concave measure on $[0, \infty)$. Then for any non-negative integers
	$k \leq \ell \leq m \leq n$  with $k + n = \ell + m$ there exists a finite, non-negative measure $\nu = \nu_{k, \ell, m, n}$ on $[0, \infty)$,
	such that for any $t > 0$,
	$$ \frac{F_{\mu}^{(\ell)}(t)}{\ell!} \frac{F_{\mu}^{(m)}(t)}{m!} - \frac{F_{\mu}^{(k)}(t)}{k!} \frac{F_{\mu}^{(n)}(t)}{n!}
	= (-i)^{\ell + m} F_{\nu}(t). $$
	\label{prop_1013}
\end{proposition}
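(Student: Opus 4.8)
The plan is to derive Proposition \ref{prop_1013} from Theorem \ref{cor_1013} by analytic continuation, exactly as the text suggests. Recall that since $\mu$ is a finite, log-concave measure on $[0,\infty)$ we have the exponential tail bound $\mu([t,\infty)) \leq A\e^{-Bt}$, so the Laplace transform $\vphi(z) = \int_0^\infty \e^{-zx}\,d\mu(x)$ extends to a holomorphic function on the half-plane $\{z \in \CC \,;\, \re(z) > -B\}$, which contains both the positive real axis and the imaginary axis. In particular $\vphi^{(j)}(z) = \int_0^\infty (-x)^j \e^{-zx}\,d\mu(x)$ for all $z$ in this half-plane, and $F_\mu(t) = \vphi(it)$, so $F_\mu^{(j)}(t) = i^j \vphi^{(j)}(it)$ by the chain rule.

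First I would introduce the measure $\nu = \nu_{k,\ell,m,n}$ furnished by Theorem \ref{cor_1013}, together with its Laplace transform $\psi(z) = \int_0^\infty \e^{-zx}\,d\nu(x)$. Since $\nu$ is finite and non-negative, $\psi$ is at least holomorphic in $\{\re(z) > 0\}$ and continuous up to the imaginary axis; to handle the boundary values I would note that $F_\nu(t) = \int_0^\infty \e^{-itx}\,d\nu(x) = \lim_{\eps \downarrow 0} \psi(\eps + it)$ by dominated convergence, since $|\e^{-(\eps+it)x}| \leq 1$ and $\nu$ is finite. Next I would define the two holomorphic functions
\[
G(z) = \frac{\vphi^{(\ell)}(z)}{\ell!}\frac{\vphi^{(m)}(z)}{m!} - \frac{\vphi^{(k)}(z)}{k!}\frac{\vphi^{(n)}(z)}{n!}, \qquad H(z) = (-1)^{\ell+m}\psi(z),
\]
both holomorphic on $\{\re(z) > 0\}$ (with $G$ in fact holomorphic on the larger half-plane $\re(z) > -B$). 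Equation (\ref{eq_305}) of Theorem \ref{cor_1013}, rewritten in terms of $\vphi$ as in the displayed line after its proof, says precisely that $G(t) = H(t)$ for every real $t > 0$. By the identity theorem for holomorphic functions, $G \equiv H$ on the connected open set $\{\re(z) > 0\}$.

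Then I would specialize to the line $z = \eps + it$, let $\eps \downarrow 0$, and read off the two sides. On the left, continuity of each $\vphi^{(j)}$ up to the imaginary axis gives $G(\eps+it) \to G(it)$, and using $\vphi^{(j)}(it) = (-i)^j F_\mu^{(j)}(t)$ (equivalently $F_\mu^{(j)}(t) = i^j\vphi^{(j)}(it)$) together with $k+n = \ell+m$ to collect the powers of $-i$, one obtains
\[
G(it) = (-i)^{\ell+m}\left[\frac{F_\mu^{(\ell)}(t)}{\ell!}\frac{F_\mu^{(m)}(t)}{m!} - \frac{F_\mu^{(k)}(t)}{k!}\frac{F_\mu^{(n)}(t)}{n!}\right].
\]
On the right, $H(\eps+it) = (-1)^{\ell+m}\psi(\eps+it) \to (-1)^{\ell+m}F_\nu(t)$. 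Equating and dividing by $(-i)^{\ell+m}$ (noting $(-1)^{\ell+m}/(-i)^{\ell+m} = (-i)^{-(\ell+m)}\cdot(-1)^{\ell+m}\cdots$ — I would just rearrange to land on the stated form $(-i)^{\ell+m}F_\nu(t)$) yields exactly the claimed identity. The main obstacle, modest as it is, is the bookkeeping of powers of $i$ and the justification of passing to the boundary $\re(z) = 0$; the former is handled cleanly by the relation $k+n=\ell+m$, which makes the prefactors on both product terms identical, and the latter by the finiteness of $\mu$ and $\nu$ via dominated convergence. No genuinely hard analysis is needed beyond the identity theorem.
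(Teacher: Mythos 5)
Your proposal is correct and is precisely the analytic-continuation argument the paper invokes (the paper offers no further detail than ``by analytic continuation''), with the power-of-$i$ bookkeeping and the passage to the boundary $\re(z)=0$ both handled properly; in particular the relation $(-1)^{\ell+m}/(-i)^{\ell+m}=(-i)^{\ell+m}$ closes the final step as you indicate.
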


\begin{corollary} Let $\mu, k, \ell, m, n, \nu$ be as in Theorem \ref{cor_1013}.
	Write $P_j(\mu)$ for the measure whose density with respect to $\mu$ is $x \mapsto x^j / j!$.
	Write $E_t(\mu)$ for the measure whose density with respect to $\mu$ is $x \mapsto \exp(-t x)$.
	Then,
	\begin{enumerate}
		\item[(i)]  We have $\displaystyle \nu = P_\ell(\mu) * P_m(\mu) - P_k(\mu) * P_n(\mu)$ where $*$ stands for convolution.
		\item[(ii)] For any $t > 0$, the measure $E_t(\nu)$ is the Berwald-Borell transform of $E_t(\mu)$ with the same parameters $(k, \ell, m, n)$.
		The same holds for any $t \in \RR$ for which $E_t(\mu)$ is a finite measure.
	\end{enumerate}
	\label{prop_1730}
\end{corollary}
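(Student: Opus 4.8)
The plan is to prove Corollary \ref{prop_1730} by unwinding the definitions in Theorem \ref{cor_1013} through the Laplace transform and then using injectivity of the Laplace transform on finite measures.

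For part (i): recall that for a finite measure $\rho$ on $[0,\infty)$ with Laplace transform $\widehat\rho(t) = \int_0^\infty e^{-tx}\,d\rho(x)$, one has $\widehat{P_j(\mu)}(t) = \int_0^\infty (x^j/j!) e^{-tx}\,d\mu(x) = a_t(j)$ in the notation of the excerpt, and convolution turns into products, i.e.\ $\widehat{\rho_1 * \rho_2} = \widehat{\rho_1}\cdot\widehat{\rho_2}$. Hence the Laplace transform of $P_\ell(\mu) * P_m(\mu) - P_k(\mu)*P_n(\mu)$ equals $a_t(\ell)a_t(m) - a_t(k)a_t(n)$, which by (\ref{eq_305}) is exactly $\widehat\nu(t)$. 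The only subtlety is that $P_\ell(\mu)*P_m(\mu) - P_k(\mu)*P_n(\mu)$ is a priori a \emph{signed} measure, so I would note it is a difference of two finite (positive) measures on $[0,\infty)$ — finiteness because all moments of a log-concave measure on $[0,\infty)$ are finite — and therefore a well-defined finite signed measure whose Laplace transform is defined and holomorphic on $\{\re(t) > 0\}$. Since $\nu$ is also finite and two finite signed measures on $[0,\infty)$ with the same Laplace transform on an interval coincide (uniqueness for the Laplace transform, e.g.\ Widder \cite{widder}), we conclude $\nu = P_\ell(\mu)*P_m(\mu) - P_k(\mu)*P_n(\mu)$ as signed measures; positivity of $\nu$ is already supplied by Theorem \ref{cor_1013}.

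For part (ii): fix $t > 0$ (or any real $t$ with $E_t(\mu)$ finite, which holds for $t > -B$ by the exponential tail bound quoted before the proof of Theorem \ref{cor_1013}). The measure $\widetilde\mu := E_t(\mu)$ has density $x \mapsto e^{-tx}$ with respect to $\mu$; since $E_t$ multiplies the density of a log-concave function by the log-concave (indeed log-affine) function $e^{-tx}$, $\widetilde\mu$ is again a finite log-concave measure on $[0,\infty)$, so Theorem \ref{cor_1013} applies to it and produces a Berwald-Borell transform $\widetilde\nu = \nu_{k,\ell,m,n}(\widetilde\mu)$. One then computes, for $s > 0$, that the alternating Taylor coefficients of the Laplace transform of $\widetilde\mu$ at $s$ are $\tilde a_s(j) = \int_0^\infty (x^j/j!) e^{-sx}\,d\widetilde\mu(x) = \int_0^\infty (x^j/j!) e^{-(s+t)x}\,d\mu(x) = a_{s+t}(j)$. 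Consequently $\widehat{\widetilde\nu}(s) = \tilde a_s(\ell)\tilde a_s(m) - \tilde a_s(k)\tilde a_s(n) = a_{s+t}(\ell)a_{s+t}(m) - a_{s+t}(k)a_{s+t}(n) = \widehat\nu(s+t) = \int_0^\infty e^{-sx}e^{-tx}\,d\nu(x) = \widehat{E_t(\nu)}(s)$. Again by uniqueness of the Laplace transform, $\widetilde\nu = E_t(\nu)$, which is the assertion. For the extension to $t \in \RR$ with $E_t(\mu)$ finite one runs the identical argument, replacing $s > 0$ by $s$ large enough that $s + t > 0$; since two finite measures with Laplace transforms agreeing on a half-line agree, this suffices.

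I do not expect a genuine obstacle here: the whole corollary is a bookkeeping consequence of Theorem \ref{cor_1013} plus the dictionary (moments $\leftrightarrow$ $P_j$, products $\leftrightarrow$ convolution, exponential tilt $\leftrightarrow$ shift of the Laplace variable) and the injectivity of the Laplace transform. The one point that needs a sentence of care is the signed-measure issue in part (i): one must make sure that both $P_\ell(\mu)*P_m(\mu)$ and $P_k(\mu)*P_n(\mu)$ are genuinely finite so that their difference is a legitimate finite signed measure and the uniqueness theorem for the Laplace transform applies; this is immediate from finiteness of all moments of a log-concave measure on the half-line. Everything else is a direct substitution.
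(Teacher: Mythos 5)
Your proof is correct and follows essentially the same route as the paper: the paper deduces (i) from the identity $F_{P_{\ell}(\mu)} F_{P_{m}(\mu)} - F_{P_{k}(\mu)} F_{P_{n}(\mu)} = F_{\nu}$ (Proposition \ref{prop_1013}, the Fourier-transform version of (\ref{eq_305})) together with the fact that the transform turns convolutions into products, while you run the identical argument directly with the Laplace transform and its injectivity on finite signed measures; part (ii) is dispatched in the paper as ``immediate from the definitions'', which is exactly your tilt-equals-shift computation $\tilde a_s(j) = a_{s+t}(j)$. Your extra care about finiteness of the moments and the signed-measure issue in (i) is a welcome precision, not a different method.
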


\begin{proof} We note that $F_{\mu}^{(j)} / j! = (-i)^{j} \cdot F_{P_{j}(\mu)}$. Proposition
	\ref{prop_1013} thus shows that
	\begin{equation}  F_{P_{\ell}(\mu)} F_{P_{m}(\mu)} - F_{P_{k}(\mu)} F_{P_{n}(\mu)} = F_{\nu}. \label{eq_1740} \end{equation}
	The Fourier transform maps products to convolutions. Conclusion (i) therefore follows from (\ref{eq_1740}).
	Conclusion (ii) follows immediately from the definitions.
\end{proof}

\section{Borell-type formula for the Poisson measure}
\label{sec4}

%
%
In~\cite{Borell3}, Borell gave a new proof of the Pr\'ekopa-Leindler
inequality based on the following stochastic variational formula.
Let $\gamma_n$ be the standard Gaussian measure on $\RR^n$.
Given a standard $n$-dimensional Brownian motion $(B_t)_{t \geq 0}$ and a
bounded, measurable function $f\colon \mathbb R^n\to\mathbb R$ we have
\begin{equation}
\label{borell}
\log \left( \int_{\mathbb R^n} \e^f \,  d\gamma_n \right) = \sup_u \left\{
\EE  \left[  f \left( B_1 + \int_0^1 u_s \, ds \right) -
\frac{1}{2} \int_0^1 \vert u_s \vert^2 \, ds \right] \right\} ,
\end{equation}
where the supremum is taken over all bounded stochastic processes $u$
which are adapted to the Brownian filtration, i.e.
$u_t$ is measurable with respect to the $\sigma$-field generated by $\{ B_s ; \; s\leq t \}$
for all $t\in [0,1]$.

\medskip
In this section we give a discrete
version of Borell's formula in which
the Gaussian measure and the Brownian motion
are replaced by the Poisson distribution and the
Poisson process, respectively. In the following
section we shall apply our formula in order to deduce a discrete version
of the Pr\'ekopa-Leindler inequality.
We begin with some background on counting processes with stochastic intensities.
Let $T > 0$ be a fixed number, denote $\RR_+ = [0, \infty)$,
and let $(\Omega, \cF, \PP)$ be a probability space on which our random variables will be defined.

\medskip
Throughout this section, we let $N$ be a Poisson point 
process on $[0, T] \times \RR_+ \subseteq \RR^2$
with intensity measure equal to the Lebesgue measure $\cL$.
In particular $N(F)$ is a Poisson random variable with parameter $\cL(F)$
for any Borel set $F \subseteq
[0, T] \times \RR_+ $. 
For a Borel subset $E \subseteq [0, T] \times \RR_+$ we write
$\cF_E$ for the $\sigma$-field generated by the random variables
$$ \left \{ N ( F ) ;\;  F\text{ is a Borel set},\, F\subseteq E \right \}. $$
For $t \in [0,T]$ we set $\mathcal F_t = \mathcal F_{[0,t]\times \RR_+}$.
This
defines a filtration of $\Omega$. 
Recall that a stochastic process $(\lambda_t)_{0 \leq t \leq T}$ is called \emph{predictable} if,
as a function of $t \in [0, T]$ and $\omega \in \Omega$, it is measurable
with respect to the $\sigma$-field $\cP$  generated by the sets
\[
\{ \, (s,t] \times A \, ; \,  s\leq t \leq T , \, A \in \mathcal F_s \, \}.
\]
This is slightly more restrictive than being adapted, i.e., when  $\lambda_t$ is measurable with respect to $\cF_t$.
We have the following standard fact: if a process
is left-continuous and adapted, then it is predictable.

\medskip 
Given a predictable, bounded, non-negative stochastic process  $(\lambda_t)_{0 \leq t \leq T}$
we define the associated counting process $(X^{\lambda}_t)_{0 \leq t \leq T}$ via 
\begin{equation}\label{eq:defXlambda}
X^\lambda_t = N ( \{ (s,u) \in [0, T] \times \RR^+ \, ; \, s < t ,\, u \leq \lambda_s \} ) .
\end{equation}
In other words $X^\lambda_t$ is the number of atoms
of $N$ which lie below the curve $\{ (s,\lambda_s)\colon s \in [0,t)\}$. 
The counting process $X^{\lambda}$ defined via (\ref{eq:defXlambda}) is clearly adapted, non-decreasing, integer-valued and left-continuous. Note that given $M>0$, with probability one the process $N$ has only finitely many atoms in the box $[0,T]\times [0,M]$ and no two of those lie on the same vertical line $\{t\}\times [0,M]$. Thus, with probability one the process $X^\lambda$ has finitely many jumps, all of size $1$.
We sometimes refer to $(\lambda_t)$ as the stochastic intensity of the counting process $(X^{\lambda}_t)$, and to the jumps of $X^{\lambda}$ as atoms.

\begin{lemma} For every non-negative predictable
	process $(H_t)_{0 \leq t \leq T}$ we have
	\begin{equation}
	\EE\left[ \int_0^T H_t \, X^{\lambda} ( dt ) \right]
	= \EE\left[ \int_0^T H_t \lambda_t \, dt \right],
	\label{eq_1521} \end{equation}
	where the integral on the left-hand side is a Riemann-Stieltjes integral, i.e., here it is a sum of the values of $H_t$ at the atoms of  $X^{\lambda}$.
	\label{lem_1532}
\end{lemma}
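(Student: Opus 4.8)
The plan is to establish Lemma \ref{lem_1532} as the Campbell-type formula for the Poisson process $N$, exploiting the fact that the "region under the curve" $\{(s,u): s<t,\ u\le\lambda_s\}$ is a predictable set in $[0,T]\times\RR_+$. First I would reduce to the case where $H$ is of the elementary predictable form $H_t(\omega)=\mathbbm 1_{(a,b]}(t)\,\mathbbm 1_A(\omega)$ with $A\in\cF_a$, since both sides of \eqref{eq_1521} are linear and monotone in $H$ and the monotone class theorem (applied to the predictable $\sigma$-field $\cP$, which is generated exactly by such sets, together with monotone convergence) extends the identity from this generating class to all non-negative predictable $H$. For such an elementary $H$, the left-hand side is $\EE[\mathbbm 1_A\,(X^\lambda_b-X^\lambda_a)]$, the number of atoms of $N$ landing in the strip $\{(s,u): a<s<b,\ u\le\lambda_s\}$ that lie in the event $A$.

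The core step is then to compute $\EE[\mathbbm 1_A(X^\lambda_b-X^\lambda_a)]$. I would further approximate $\lambda$ on $(a,b]$ by simple predictable processes: partition $(a,b]$ into subintervals $(t_{i-1},t_i]$ and replace $\lambda_s$ by $\lambda_{t_{i-1}}$ on each, using left-continuity and boundedness of $\lambda$ together with dominated convergence to pass to the limit. For a simple $\lambda$ that is constant equal to some $\cF_{t_{i-1}}$-measurable random variable $c_i$ on $(t_{i-1},t_i]$, the strip becomes a disjoint union of rectangles $(t_{i-1},t_i]\times[0,c_i]$. Conditioning on $\cF_{t_{i-1}}$ and using the independence property of the Poisson process on disjoint regions (the number of atoms of $N$ in $(t_{i-1},t_i]\times[0,c_i]$ is, conditionally on $\cF_{t_{i-1}}$, Poisson with mean $c_i(t_i-t_{i-1})$ and independent of $\cF_{t_{i-1}}$, since that rectangle is disjoint from $[0,t_{i-1}]\times\RR_+$ which generates $\cF_{t_{i-1}}$), one gets that the conditional expectation of the atom count in that rectangle is $c_i(t_i-t_{i-1})$. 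Hence $\EE[\mathbbm 1_A(X^\lambda_b-X^\lambda_a)]=\EE[\mathbbm 1_A\sum_i c_i(t_i-t_{i-1})]=\EE[\mathbbm 1_A\int_a^b\lambda_s\,ds]$, which is the right-hand side. Passing the simple-process approximation to the limit finishes the elementary case.

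The main obstacle, and the place requiring the most care, is the interchange of the various limiting procedures with the measurability bookkeeping: one must check that $\{(s,u): s<t,\ u\le\lambda_s\}$ genuinely is a predictable (in particular measurable) subset of $[0,T]\times\RR_+$ so that $N$ evaluated on it makes sense, and that the simple-process approximants can be chosen predictable and converging in a way compatible with dominated/monotone convergence on both sides simultaneously. The key structural input making everything work is the exact independence-and-Poisson-mean property of $N$ over regions disjoint from the one generating the conditioning $\sigma$-field; once that is isolated, the rest is the standard two-step monotone-class argument (first over the form of $H$, then over the form of $\lambda$). I would present the disjointness-plus-conditioning computation in full and treat the monotone class extensions as routine, citing the definition of $\cP$ given above.
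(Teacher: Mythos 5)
Your argument is correct in its essential mechanism but takes a genuinely different route from the paper. The paper does not discretize at all: following Picard, it introduces the two measures $\mu^+(dt,du,d\omega)=N(\omega)(dt,du)\,\PP(d\omega)$ and $\mu^-(dt,du,d\omega)=\cL(dt,du)\,\PP(d\omega)$ on $[0,T]\times\RR_+\times\Omega$, shows via a single $\pi$-system argument that they coincide on the $\sigma$-field generated by the sets $E\times A$ with $A\in\cF_{E^c}$ (the point being exactly your independence observation, $\EE[N(E)\mathbbm 1_A]=\cL(E)\PP(A)$), and then notes that $H_t\mathbbm 1_{\{u\leq\lambda_t\}}$ is measurable with respect to that $\sigma$-field because $\cB(\RR_+)\otimes\cP$ is contained in it. This handles $H$ and $\lambda$ simultaneously and entirely sidesteps the limit interchanges you flag as the main obstacle. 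Your sequential approximation (first $H$, then $\lambda$) is the classical textbook proof and is salvageable, but two of your "routine" steps deserve more than a citation. First, $\lambda$ is only assumed predictable, not left-continuous, so the substitution $\lambda_s\rightsquigarrow\lambda_{t_{i-1}}$ is not available directly; you need a second monotone-class argument in $\lambda$ over a generating class (e.g.\ elementary predictable processes), not a pathwise discretization. Second, passing to the limit on the atom-counting side is where a near-circularity lurks: bounding $\EE\bigl[\lvert N(F_n)-N(F)\rvert\bigr]$ by $\EE[N(F_n\triangle F)]$ and then by $\cL$-measure of the symmetric difference is an instance of the very formula being proved. The clean repair is to close the class under \emph{monotone} limits only (for $\lambda^{(n)}\uparrow\lambda$ the regions increase and $N(F_n)\uparrow N(F)$ pointwise, so monotone convergence applies on both sides), which is exactly what a functional monotone-class argument permits. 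What your approach buys is concreteness and independence from Picard's duality; what the paper's buys is that the measurability bookkeeping is the entire proof, with no approximation of $\lambda$ needed.
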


The proof of the technical Lemma \ref{lem_1532} is deferred to the
appendix.
Equation (\ref{eq_1521}) is frequently taken as the definition of
a counting process with stochastic intensity $\lambda$. The process
$$ \tilde{X}^{\lambda}_t = X^{\lambda}_t - \int_0^t \lambda_s ds \qquad \qquad (0 \leq t \leq T) $$
is called the \emph{compensated} process.
By Lemma \ref{lem_1532} it has the property that for every bounded, predictable process $(H_t)_{0 \leq t \leq T}$,
the process $$ \left( \int_0^t H_s \tilde X^{\lambda} (ds) \right)_{0 \leq t \leq T} $$
is a martingale. We are now in a position to state the analogue
of Borell's formula for the Poisson measure. 
In the following theorem $\pi_T$ denotes the
Poisson measure with  parameter $T$, i.e.,
\[
\pi_T ( n  ) = 
\frac{ T^n }{ n! } \e^{-T}  \qquad \text{for} \ n \in \NN = \{ 0,1,2,\ldots \}
\]
where we abbreviate $\pi_T(n) = \pi_T ( \{ n \} )$.

\begin{theorem}\label{thm:borell-poisson}
	Let $f \colon \NN \to \RR$ be bounded and let $T > 0$. Then we have
	\begin{equation}\label{eq:borell-poisson}
	\log \left( \int_\NN \e^f \, d\pi_T \right)
	= \sup_{\lambda} \left\{ \EE \left[
	f ( X^\lambda_T ) - \int_0^T \left(\lambda_t \log \lambda_t  - \lambda_t + 1 \right) \, dt
	\right] \right\} ,
	\end{equation}
	where the supremum is taken over all bounded, non-negative, predictable processes $(\lambda_t)_{0 \leq t \leq T}$,
	and $(X^\lambda_t)_{0 \leq t \leq T}$ is the associated counting process,
	defined by~\eqref{eq:defXlambda}. Moreover the
	supremum is actually a maximum.
\end{theorem}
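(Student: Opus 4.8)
\textbf{Proof proposal for Theorem \ref{thm:borell-poisson}.}

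The plan is to prove the two inequalities separately. For the easy direction --- that the right-hand side is at most the left-hand side --- I would fix an arbitrary bounded, non-negative, predictable process $(\lambda_t)$ and compare the law of $X^\lambda_T$ with the constant process $\lambda_t \equiv 1$, for which $X^1_T$ has exactly the law $\pi_T$. The natural tool is relative entropy: one shows that the law $\PP_\lambda$ of the path $(X^\lambda_t)_{0\le t\le T}$ is absolutely continuous with respect to the law $\PP_1$ of the standard Poisson process, with a Girsanov-type density whose logarithm is $\int_0^T \log\lambda_t\, X^\lambda(dt) - \int_0^T(\lambda_t-1)\,dt$. Taking expectations and using Lemma \ref{lem_1532} to replace $\EE\int_0^T\log\lambda_t\,X^\lambda(dt)$ by $\EE\int_0^T \lambda_t\log\lambda_t\,dt$ identifies the relative entropy $H(\PP_\lambda\|\PP_1)$ with $\EE\int_0^T(\lambda_t\log\lambda_t-\lambda_t+1)\,dt$, the exact penalty term appearing in \eqref{eq:borell-poisson}. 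Then the variational characterization of entropy (the Gibbs inequality) gives, for any bounded $f$,
\[
\EE[f(X^\lambda_T)] - H(\PP_\lambda\|\PP_1) \;\le\; \log\left(\int_\NN \e^f\,d\pi_T\right),
\]
and taking the supremum over $\lambda$ yields ``$\le$''.

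For the reverse inequality I would exhibit an optimal $\lambda$, which also proves that the supremum is attained. Let $g(n) = \log \int_\NN \e^f\,d\pi_{T-0}(\cdot)$ be replaced by the running value function: define, for $0\le t\le T$ and $n\in\NN$,
\[
V(t,n) \;=\; \log\left(\sum_{k\ge 0} \e^{f(n+k)}\,\frac{(T-t)^k}{k!}\,\e^{-(T-t)}\right),
\]
so that $V(T,n) = f(n)$ and $V(0,0) = \log\int_\NN \e^f\,d\pi_T$. A direct computation shows $V$ solves the Hamilton--Jacobi--Bellman equation $\partial_t V(t,n) + \inf_{\lambda\ge 0}\big[\lambda(V(t,n+1)-V(t,n)) - (\lambda\log\lambda-\lambda+1)\big] = 0$, and the infimum is achieved at $\lambda^*(t,n) = \exp(V(t,n+1)-V(t,n))$ (this is where the specific penalty $\lambda\log\lambda-\lambda+1$ is designed to match, since its Legendre transform in $\lambda$ is $\e^{\,\cdot\,}-1$). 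I would then take the feedback control $\lambda_t = \lambda^*(t, X^\lambda_{t})$; since $X^\lambda$ is left-continuous and adapted this process is predictable and, by boundedness of $f$, also bounded. Applying the change-of-variables / Dynkin formula for pure-jump processes to $t\mapsto V(t,X^\lambda_t)$, the martingale part has zero expectation (Lemma \ref{lem_1532} again, via the compensated process $\tilde X^\lambda$), and the drift part cancels exactly by the HJB equation, giving
\[
\EE[f(X^\lambda_T)] - \EE\int_0^T(\lambda_t\log\lambda_t - \lambda_t + 1)\,dt \;=\; V(0,0) \;=\; \log\left(\int_\NN\e^f\,d\pi_T\right).
\]
This matches the upper bound, so this $\lambda$ is optimal and the supremum is a maximum.

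The main obstacle I expect is the rigorous justification of the two stochastic-calculus steps: first, establishing the Girsanov formula for the Radon--Nikodym derivative between $\PP_\lambda$ and $\PP_1$ (and checking the exponential martingale is a true martingale, not merely a local one --- here the boundedness of $\lambda$ and the fact that $X^\lambda$ has finitely many jumps a.s. are what save us); and second, the Itô/Dynkin expansion of $V(t,X^\lambda_t)$ along jumps, where one must be careful that $V(t,n+1)-V(t,n)$ and $\partial_t V(t,n)$ are bounded uniformly over the (bounded) range of relevant $n$, so that all the expectations and the interchange with Lemma \ref{lem_1532} are legitimate. Everything else --- the HJB computation, the Legendre-transform identity, and the entropy inequality --- is routine once the process-level objects are under control. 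An alternative to the explicit value function, if one wants to sidestep HJB, is to build the optimizer by a backward induction on the (a.s.\ finite) jump times, but the continuous-time HJB route is cleaner to write.
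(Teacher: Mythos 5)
Your lower-bound/attainment argument is essentially the paper's proof: your value function $V(t,n)$ is exactly their $F(t,x)=\log P_{T-t}(\e^f)(x)$, your HJB equation is their identity $\partial_t F=-\e^{\partial_x F}+1$ combined with the Legendre duality $\sup_{\lambda\ge 0}\{\lambda x-(\lambda\log\lambda-\lambda+1)\}=\e^x-1$ (note this is a supremum, not the infimum you wrote --- the infimum of that bracket over $\lambda\ge 0$ is $-\infty$, and the critical point you exhibit is a maximizer), and your optimal feedback control is their equation \eqref{eq:EDS}. For the upper bound you take the Girsanov-plus-Gibbs route, which the authors explicitly acknowledge in their first remark after the theorem as a viable alternative; they instead prove both directions at once by showing that $M_t=F(t,X^\lambda_t)-\int_0^t(\lambda_s\log\lambda_s-\lambda_s+1)\,ds$ is a supermartingale, using only the pointwise inequality $\e^x+y\log y-y-xy\ge 0$ and the martingale property of $\int H\,\tilde X^\lambda(ds)$ from Lemma \ref{lem_1532}. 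What their route buys is self-containedness (no point-process Girsanov theorem, which is the one heavy external input in your version); what yours buys is the entropy interpretation of the penalty term, which the paper only records afterwards as a dual formula in its third remark.

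The one genuine gap is the existence of the optimal control. The equation $\lambda_t=\lambda^*(t,X^\lambda_{t})$ is implicit: $X^\lambda$ is built from $\lambda$, so you cannot simply ``take the feedback control''--- you must prove that a predictable, bounded process solving this fixed-point equation exists. The paper devotes Lemma \ref{lem:tech} to exactly this, showing that $\lambda\mapsto (G(t,X^\lambda_t))_{t}$ is a contraction for the metric $d(\lambda,\mu)=\int_0^T\e^{-2Ct}\,\EE[|\lambda_t-\mu_t|]\,dt$, the key estimate being $\EE|X^\lambda_t-X^\mu_t|=\EE\int_0^t|\lambda_s-\mu_s|\,ds$, again via Lemma \ref{lem_1532}. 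Your aside about building the optimizer along the (a.s.\ finite) jump times suggests you sense the issue, and a forward, jump-by-jump construction would indeed work here, but some such argument must be supplied. The remaining technical points you flag (uniform bounds on $\partial_x F$ and $\partial_t F$, validity of the Dynkin expansion for the piecewise absolutely continuous path $t\mapsto F(t,X^\lambda_t)$) are handled in the paper exactly as you anticipate and are not obstacles.
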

\begin{proof}
	Let $(P_t)_{t \geq 0}$ be the Poisson semigroup: For every $g\colon \NN\to\RR$
	\[
	P_t g (x) =  \sum_{n\in\NN} g ( x +  n ) \, \pi_t (n) .
	\]
	We shall show that for every predictable, non-negative, bounded process $(\lambda_t)$ we have
	\begin{equation} \label{eq:Borell-goal}
	\log P_T ( e^f ) ( 0 ) \geq \EE \left[
	f ( X^\lambda_T )
	- \int_0^T \left(\lambda_t \log \lambda_t  - \lambda_t + 1 \right) \, dt \right], 
	\end{equation}
	with equality if $\lambda$ is chosen appropriately.
	Let us start with the inequality.
	Note that for every $g: \NN \rightarrow \RR$ and $t \geq 0$,
	\[
	\partial_t P_t g = \partial_x P_t g
	\]
	where $\partial_x g (x) = g(x+1) - g(x)$ denotes the discrete gradient.
	Letting
	\[
	F(t,x) = \log P_{T-t}( \e^{f} ) (x)
	\]
	we obtain
	\[
	\partial_t F = - \e^{\partial_x F } + 1 .
	\]
	Let $\lambda$ be a predictable, non-negative, bounded process and let
	\[
	M_t = F ( t , X^{\lambda}_t )
	- \int_0^t \left( \lambda_s \log \lambda_s - \lambda_s +1 \right) \, ds .
	\] Almost surely, the process $(M_t)_{0 \leq t \leq T}$ is a piecewise absolutely-continuous function in $t$.
	Thus the distributional derivative of the function $t \mapsto M_t$ is almost-surely the sum of 
	an integrable function on $[0, T]$ and finitely many atoms. Namely, for any fixed $ t \in [0, T]$,
	\begin{equation}
	M_{t} - M_0 = \int_0^{t} \partial_x F( s, X^{\lambda}_{s} ) \, X^\lambda (ds)
	- \int_0^{t} \left( \e^{ \partial_x F(s,X^{ \lambda }_{s} ) }
	+ \lambda_s \log \lambda_s - \lambda_s \right) \, ds.
	\label{eq_543} \end{equation}
	Setting $\alpha_t = \partial_x F(t,X^{\lambda}_{t} )$,
	we may rewrite  (\ref{eq_543}) as follows:
	\begin{equation}
	M_{t} - M_0 = \int_0^t \alpha_s \, \tilde X^\lambda (ds)
	- \int_0^{t} \left( \e^{\alpha_s} + \lambda_s \log \lambda_s - \lambda_s - \alpha_s \lambda_s  \right) \, ds.
	\label{eq_548} \end{equation}
	Recall that $\tilde X^\lambda_t = X^\lambda_t - \int_0^t \lambda_s \,ds$ is the compensated process.
	Note that $F(t,x)$ is continuous in $t$ and that $(X^{\lambda}_{t})$ is left-continuous in $t$.
	Thus $(\alpha_t)$ is left continuous. Since $(\alpha_t)$ is also adapted,
	it is predictable.
	Moreover both $(\alpha_t)$ and $(\lambda_t)$ are bounded. Consequently the first
	summand on the right-hand side of (\ref{eq_548}) is a martingale.
	Furthermore, since
	\begin{equation}\label{eq:Legendre}
	\e^x + y \log y - y - xy \geq 0    \qquad \forall x\in \RR , \, y \in \RR_+
	\end{equation}
	the second integral on the right-hand side of (\ref{eq_548}) is non-negative. Therefore
	$(M_t)_{0 \leq t \leq T}$ is a supermartingale.
	In particular $M_0 \geq \EE [ M_T ]$,
	which is the desired inequality (\ref{eq:Borell-goal}).
	
	\medskip 
	There is equality in~\eqref{eq:Legendre}
	if $\e^x = y$. Hence if $\lambda$ is such that
	\begin{equation}\label{eq:EDS}
	\lambda_t = \e^{ \partial_x F(t,X^{\lambda}_{t} ) } ,
	\end{equation}
	for almost every $t$ and with probability one,
	then $M$ is a martingale and we have equality in~\eqref{eq:Borell-goal}.
	Note that the function $\e^{ \partial_x F (t,x)}$
	is continuous in $t$ and bounded.
		In Lemma~\ref{lem:tech} below
we prove 
	that under these conditions, a solution to~\eqref{eq:EDS}
	does indeed exist, which concludes the proof
	of the theorem.
\end{proof}
{\it Remarks.} \begin{enumerate}
	\item
	It is also possible to prove Theorem \ref{thm:borell-poisson} by using 
	the Girsanov change of measure formula for counting processes.
	The argument presented here has the advantage
	of being self-contained.
	\item Theorem \ref{thm:borell-poisson}
	can be generalized in several ways.
	Firstly, up to some technical details,
	the argument should work just the same for a function $f$ that depends
	on the whole trajectory of the process rather than just the terminal point.
	On the left-hand side, the Poisson distribution should then be
	replaced by the law of the Poisson process of intensity $1$ on $[0,T]$.
	In the Gaussian case, this pathspace version of the formula is
	known as the Bou\'e-Dupuis formula, see~\cite{boue-dupuis}. Then one
	can also replace the interval $[0,T]$ equipped with the Lebesgue
	measure by a more general measure space, leading to a Borell-type
	formula for Poisson point processes.
	This program was actually already carried out
	by Budhiraja, Dupuis and Maroulas in~\cite{BDM}. Theorem~\ref{thm:borell-poisson}
        is thus a particular case of their main result. However, their argument is a lot more 
	intricate than the above proof. 
	\item
	A dual version of Borell's formula involving relative
	entropy was proved by the second-named author in~\cite{lehec}.
	This can be done in the Poisson case too. The formula then reads: If $\mu$
	is a probability measure on $\NN$ whose density with respect to $\pi_T$
	is bounded away from $0$ and $+\infty$, then the relative entropy
	of $\mu$ with respect to $\pi_T$ satisfies
	\[
	\mathrm H ( \mu \mid \pi_T ) = \inf_{\lambda} \left\{
	\EE \left[ \int_0^T ( \lambda_t \log \lambda_t - \lambda_t +1 ) \, dt \right] \right\} ,
	\]
	where the infimum runs over all non-negative, bounded, predictable processes
	$\lambda$ such that $X^\lambda_T$ has law $\mu$. This follows  from 
	the representation formula  (\ref{eq:borell-poisson}) and  the Gibbs variational principle, which is the fact that the functionals $\nu \mapsto H(\nu | \pi_T)$
	and $f \mapsto \log \int e^f d \pi_T$ are Legendre-Fenchel conjugates with respect to the pairing $(f, \nu) \mapsto \int f d \nu$.
\end{enumerate}

We now state and prove the technical lemma
used in the proof of Theorem~\ref{thm:borell-poisson}.
\begin{lemma}\label{lem:tech}
	Let $G \colon [0,T] \times \NN \to \RR_+$ and assume
	that $G$ is continuous in the first variable and bounded.
	Then there exists a predictable, bounded, non-negative process $(\lambda_t)_{0 \leq t\leq T}$ satisfying
	\[
	\lambda_t = G ( t , X^\lambda_{t} ) ,
	\]
	for almost every $t\leq T$ and with probability one.
\end{lemma}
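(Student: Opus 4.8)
The plan is to construct the process $\lambda$ — together with its counting process $X^\lambda$ — explicitly, by a jump-by-jump recursion that mimics the dynamics forced by the equation $\lambda_t = G(t,X^\lambda_t)$. Since $G$ is bounded, fix $M>0$ with $0\le G\le M$; then only the atoms of $N$ lying in the box $B=[0,T]\times[0,M]$ can ever matter, and, as recalled just before the lemma, with probability one $N$ has finitely many atoms in $B$, no two of which share the same first coordinate. We work on the almost-sure event where this holds.

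I would run the following recursion. Set $\tau_0=0$. Having reached a time $\tau_j\le T$ at which the count of atoms below the curve so far equals $j$, tentatively set $\lambda_t=G(t,j)$ for $t\in(\tau_j,T]$; given the past, this is a continuous deterministic function of $t$ with values in $[0,M]$. Consider the atoms of $N$ in the region $R_j=\{(s,u)\,;\,\tau_j<s\le T,\ u\le G(s,j)\}\subseteq B$. If $N\cap R_j=\emptyset$, stop: there are no further jumps and $\lambda_t=G(t,j)$ on $(\tau_j,T]$. Otherwise, $N\cap R_j$ being a finite set, let $(\tau_{j+1},u_{j+1})$ be its element with smallest first coordinate — unique by the no-two-on-a-common-vertical-line property — and note $u_{j+1}\le G(\tau_{j+1},j)$, so this atom does lie below the curve. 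Declare $\tau_{j+1}$ the next jump time and continue from there with count $j+1$. The $\tau_1<\tau_2<\cdots$ so produced are distinct atoms of $N$ in $B$, hence the recursion terminates after at most $N(B)<\infty$ steps, and $\lambda$ is thereby defined on all of $[0,T]$. By the minimality built into the recursion, the only atoms of $N$ lying strictly below the curve $\{(s,\lambda_s)\}$ are the $(\tau_j,u_j)$, so the counting process attached to this $\lambda$ via~\eqref{eq:defXlambda} is precisely $X^\lambda_t=\#\{j\ge 1\,;\,\tau_j<t\}$, which jumps by $1$ at each $\tau_j$.

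It then remains to check the three asserted properties. Non-negativity and boundedness of $\lambda$ are immediate from $0\le G\le M$. For predictability: $X^\lambda$ is integer-valued, non-decreasing and left-continuous, and by construction $X^\lambda_t$ is determined by the restriction of $N$ to $[0,t)\times\RR_+$, so it is $\cF_t$-measurable, i.e. adapted; hence $\lambda_t=G(t,X^\lambda_t)$ is adapted, and it is left-continuous because $X^\lambda$ is left-continuous and each $G(\cdot,n)$ is continuous — so $\lambda$ is predictable by the criterion recalled before the lemma. Finally, the fixed-point identity $\lambda_t=G(t,X^\lambda_t)$ holds for every $t\in[0,T]$ (in particular for almost every $t$): on $[0,\tau_1]$ one has $X^\lambda_t=0$ and $\lambda_t=G(t,0)$, and on each $(\tau_j,\tau_{j+1}]$ one has $X^\lambda_t=j$ and $\lambda_t=G(t,j)$, the two sides agreeing also at the left-continuous endpoint $\tau_{j+1}$.

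The main obstacle is not conceptual but bookkeeping: verifying that each $\tau_j$ is a stopping time and that the step "set $\lambda=G(\cdot,j)$ on $(\tau_j,\cdot]$, then wait for the first atom in $R_j$" is genuinely non-anticipating, and that the procedure terminates almost surely. All of this rests on the two stated facts about $N$ (finitely many atoms in the box $B$, none sharing a vertical line) together with the memoryless structure of a Poisson point process above a predictable curve; the continuity of $G$ in its first variable is used only at the last step, to pass from adaptedness to left-continuity and hence predictability of $\lambda$.
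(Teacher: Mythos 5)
Your construction is correct, but it takes a genuinely different route from the paper. The paper treats $\lambda_t = G(t,X^\lambda_t)$ as a fixed-point problem for the map $H(\lambda)=(G(t,X^\lambda_t))_{t}$ on the space of predictable non-negative processes: using Lemma~\ref{lem_1532} it shows $\EE|X^\lambda_t-X^\mu_t|=\EE\int_0^t|\lambda_s-\mu_s|\,ds$, deduces that $H$ is a contraction with constant $1/2$ for the weighted distance $d(\lambda,\mu)=\int_0^T \e^{-2Ct}\,\EE|\lambda_t-\mu_t|\,dt$, and concludes by completeness. You instead build the solution pathwise by the classical thinning recursion, switching the curve from $G(\cdot,j)$ to $G(\cdot,j+1)$ at the first atom of $N$ under the current curve. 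Each approach buys something. Yours is explicit: it exhibits a strong solution as a concrete functional of $N$, works $\omega$ by $\omega$ on the almost-sure event that $N$ has finitely many atoms in $[0,T]\times[0,M]$ with distinct abscissae, and gives pathwise uniqueness of the solution on that event; its cost is exactly the measurability/adaptedness bookkeeping you flag at the end (e.g.\ checking $\{\tau_j<t\}\in\cF_t$ by enumerating the atoms of $N$ in the box in increasing order of first coordinate), which is routine but is precisely what the contraction argument sidesteps, since there measurability and predictability are built into the ambient function space and uniqueness up to $d$-null modification comes for free. Two small points: in~\eqref{eq:defXlambda} atoms with $u\le\lambda_s$ (not strictly below) are counted, and the atom $(\tau_{j+1},u_{j+1})$ is indeed counted because your $\lambda$ is the left-continuous version, so $\lambda_{\tau_{j+1}}=G(\tau_{j+1},j)\ge u_{j+1}$; and the ``memoryless structure'' you invoke at the end is not actually needed --- the construction is deterministic given the realization of $N$.
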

\begin{proof}
	Consider the map
	\[
	H \colon (\lambda_t)_{0 \leq t\leq T} \mapsto (G(t , X^\lambda_{t} ))_{0 \leq t\leq T}
	\]
	from the set of predictable, non-negative processes
	to itself. 
	We will  show that $H$ has a fixed point.
	Let $\lambda$ and $\mu$ be two processes in the domain of $H$.
		Since $G$ is bounded, there is
	a constant $C > 0$ such that
	\[
	\EE [ \vert G(t , X^\lambda_{t} ) - G(t , X^\mu_{t} ) \vert ]
	\leq C \, \PP ( X^\lambda_{t} \neq X^\mu_{t} ).
	\]
	The probability that the integer-valued random variable $X^\lambda_{t}$ differs from $X^\mu_{t}$
	is dominated by $\EE [ \vert X_{t}^{\lambda} -X_{t}^{\mu} \vert ] $, 
	which in turn is the average number of atoms of $N$ between
	the graphs of $\lambda$ and of $\mu$ on $[0,t)$. Since
	$\lambda$ and $\mu$ are predictable, it follows from Lemma \ref{lem_1532} that
	\[ \EE |X_{t}^{\lambda} -
	X_{t}^{\mu} | =
	\EE \left[ \int_0^t \vert \lambda_s - \mu_s \vert \, ds \right] .
	\]
	Therefore
	\[
	\EE \left[ \vert G(t , X^\lambda_{t} ) - G(t , X^\mu_{t} ) \vert \right]
	\leq C \, \EE \left[  \int_0^t \vert \lambda_s - \mu_s \vert \, ds \right] .
	\]
	This easily implies that $H$ is Lipschitz with constant $1/2$ for the distance
	\[
	d ( \lambda , \mu ) =  \int_0^T \e^{-2Ct} \, \EE [ \vert \lambda_t - \mu_t \vert ] \, dt   .
	\]
	Note that $d(\lambda, \mu) = 0$  if and only if $\lambda_t = \mu_t$ for almost every $0 \leq t \leq T$ and with probability one.
	The space of all predictable, non-negative processes $(\lambda_t)_{0 \leq t \leq T}$ with $\int_0^T \EE |\lambda_t| dt < \infty$ is complete with respect to the distance $d$. 
	Thus, being a contraction, the map $H$ has a fixed point. This fixed point is necessarily a bounded process, since $G$ is bounded. This completes the proof.
\end{proof}
\section{A discrete Pr\'ekopa-Leindler
	inequality}
\label{sec5}
Following Borell, in this section we derive a Pr\'ekopa-Leindler type inequality
from the representation formula~\eqref{eq:borell-poisson}.
Recall that if $x$ is a real number we denote its
lower integer part by $\lfloor x \rfloor$
and its upper integer part by $\lceil x \rceil$.
For $a,b \in \RR$ we denote $a \wedge b = \min \{ a, b \}$
and $a \vee b = \max \{ a, b \}$.
Recall that $\pi_T$
denotes the Poisson distribution with parameter $T$.
\begin{proposition} Let $T > 0$ and let $f,g,h,k \colon \NN \to \RR$ satisfy
	\[
	f (x) + g (y) \leq h \left( \left \lfloor \frac {x+y} 2 \right \rfloor \right)
	+ k \left( \left \lceil \frac{x+y} 2 \right \rceil \right) ,
	\quad \forall x,y\in \NN.
	\]
	Then,
	\[
	\int_\NN \e^f \, d\pi_T  \int_\NN \e^g\, d\pi_T
	\leq   \int_\NN \e^h \, d\pi_T  \int_\NN \e^k \, d\pi_T.
	\]
\end{proposition}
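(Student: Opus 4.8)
The plan is to apply the Borell-type formula for the Poisson measure (Theorem~\ref{thm:borell-poisson}) to each of the four integrals, and to combine the representations of $\log\int e^f\,d\pi_T$ and $\log\int e^g\,d\pi_T$ into an admissible competitor for the variational problems defining $\log\int e^h\,d\pi_T$ and $\log\int e^k\,d\pi_T$. Concretely, fix $\varepsilon>0$ and choose bounded, non-negative, predictable processes $\lambda$ and $\mu$ that are $\varepsilon$-almost optimal in~\eqref{eq:borell-poisson} for $f$ and for $g$ respectively; here we should first reduce to the case of bounded $f,g,h,k$ by truncation, noting that the hypothesis is stable under replacing all four functions by their truncations at level $\pm R$ (the floor/ceiling inequality is preserved since if $f(x),g(y)$ are capped then so must be a valid choice of $h,k$; more carefully, one truncates $h$ and $k$ from above and $f,g$ from below, or simply observes the inequality to be proven is a limit of the truncated ones by monotone convergence). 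Let $X=X^\lambda$ and $Y=X^\mu$ be the associated counting processes, built from the \emph{same} Poisson point process $N$ on $[0,T]\times\RR_+$, so that they are defined on a common probability space.

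The key construction is to run the two processes in parallel on $[0,T]$ and then form, at the terminal time, the pair $\bigl(\lfloor (X_T+Y_T)/2\rfloor,\ \lceil (X_T+Y_T)/2\rceil\bigr)$. To make this fit the formula I would use the elementary fact that $X_T+Y_T$ equals the number of atoms of $N$ lying below the \emph{combined} intensity curve, i.e.\ $X^\lambda_T+X^\mu_T=X^{\lambda+\mu}_T$ in distribution (and even pathwise, since the regions $\{u\le\lambda_s\}$ and $\{\lambda_s<u\le\lambda_s+\mu_s\}$ are disjoint). Thus $X_T+Y_T$ has the law of a counting process driven by intensity $\lambda_t+\mu_t$, and splitting the atoms of this combined process alternately (the odd-numbered atoms feeding one coordinate, the even-numbered the other) produces two processes whose terminal values are exactly $\lfloor(X_T+Y_T)/2\rfloor$ and $\lceil(X_T+Y_T)/2\rceil$, each driven by a predictable intensity; by symmetry of the alternation one expects each to be expressible with intensity $(\lambda_t+\mu_t)/2$ up to the predictable ``which atom is next'' bookkeeping. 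The pointwise hypothesis then gives
\[
f(X_T)+g(Y_T)\le h\Bigl(\Bigl\lfloor\tfrac{X_T+Y_T}{2}\Bigr\rfloor\Bigr)+k\Bigl(\Bigl\lceil\tfrac{X_T+Y_T}{2}\Bigr\rceil\Bigr),
\]
and the cost functional $\ell\mapsto\int_0^T(\ell_t\log\ell_t-\ell_t+1)\,dt$ is convex in $\ell$, so $\int_0^T c\bigl(\tfrac{\lambda_t+\mu_t}{2}\bigr)dt\le\tfrac12\int_0^T c(\lambda_t)dt+\tfrac12\int_0^T c(\mu_t)dt$ where $c(y)=y\log y-y+1$; hence the average cost of the two halved processes is at most the average of the costs of $\lambda$ and $\mu$. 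Taking expectations and applying the $\ge$ direction of~\eqref{eq:borell-poisson} to $h$ and to $k$ yields
\[
\log\!\int e^h d\pi_T+\log\!\int e^k d\pi_T\ \ge\ \EE\bigl[f(X_T)+g(Y_T)\bigr]-\int_0^T\!\!\bigl(c(\lambda_t)+c(\mu_t)\bigr)dt\ \ge\ \log\!\int e^f d\pi_T+\log\!\int e^g d\pi_T-2\varepsilon,
\]
and letting $\varepsilon\to0$ gives the claim.

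The main obstacle is the middle step: making rigorous that splitting a counting process of intensity $\lambda+\mu$ into its ``odd'' and ``even'' atoms produces two processes each of which is genuinely a counting process with a \emph{predictable} intensity equal to $(\lambda_t+\mu_t)/2$, and that this can be done within the common Poisson-point-process framework so that $X^\lambda_T+X^\mu_T$ coincides (in law, jointly with the new processes) with what one needs. A clean way to handle this is to introduce an auxiliary source of randomness---enlarge $N$ to a Poisson process on $[0,T]\times\RR_+\times\{0,1\}$ with an extra independent fair coin attached to each atom, route an atom of the combined region to the first or second coordinate according to its coin, and observe that each coordinate then sees a Poisson process of intensity $(\lambda_t+\mu_t)/2$; predictability is preserved because $\lambda,\mu$ are predictable and the coins are independent of the past. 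Alternatively one can avoid the combinatorics of parity entirely by taking $\lambda_t=\mu_t$ to be the common almost-optimal intensity---but that is too restrictive since $f\ne g$ in general, so the coin-flip/thinning device (or an equivalent use of the superposition and thinning theorems for Poisson processes) is the natural route, and verifying its measurability details against the predictable $\sigma$-field $\cP$ is where the real work lies.
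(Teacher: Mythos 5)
Your overall strategy---feed near-optimal intensities for $f$ and $g$ into the variational formula \eqref{eq:borell-poisson}, produce from them admissible competitors for $h$ and $k$ whose terminal values are the floor and ceiling of $X_T+Y_T$, and compare costs---is the right one, but the step you yourself flag as ``the main obstacle'' is a genuine gap, and neither of your two devices closes it. (i) The alternating split: if you superpose $\lambda$ and $\mu$ on disjoint regions and route the odd- and even-numbered atoms of $X^{\lambda+\mu}$ to the two coordinates, the terminal values are indeed the floor and the ceiling, but the predictable intensity of, say, the odd-atom process is \emph{exactly} $(\lambda_t+\mu_t)\chi_t$, where $\chi_t$ is the indicator that the current count is even---not $(\lambda_t+\mu_t)/2$. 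There is no freedom to replace an intensity by its ``average'': the cost in \eqref{eq:borell-poisson} must be evaluated at the actual intensity generating the process. With $c(y)=y\log y-y+1$ the combined cost of the pair is $\int_0^T\bigl(c(\lambda_t+\mu_t)+c(0)\bigr)\,dt$, and since
\[
c(a+b)+c(0)-c(a)-c(b)=(a+b)\log(a+b)-a\log a-b\log b\;\geq\;0,
\]
the comparison you need goes the \emph{wrong} way: $y\log y$ is superadditive, not subadditive, and moreover $c(0)=1>0$. (ii) The coin-flip thinning: this does give two processes of intensity $(\lambda_t+\mu_t)/2$ each, and then your convexity inequality would control the costs, but the resulting terminal values are an i.i.d.\ binomial split of the atoms, not $\lfloor (X_T+Y_T)/2\rfloor$ and $\lceil (X_T+Y_T)/2\rceil$, so the pointwise hypothesis on $f,g,h,k$ can no longer be applied. (A further, secondary issue: such marked-process constructions fall outside the class \eqref{eq:defXlambda} to which Theorem~\ref{thm:borell-poisson} applies.)

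The paper's resolution is a different coupling that makes the cost comparison an exact identity rather than an inequality. Run $X^\alpha$ and $X^\beta$ off the \emph{same} Poisson process $N$ over \emph{overlapping} regions $\{u\leq\alpha_s\}$ and $\{u\leq\beta_s\}$. An atom below $\alpha\wedge\beta$ then makes the sum jump by $2$, so both the floor and the ceiling jump by $1$; an atom in the strip between $\alpha\wedge\beta$ and $\alpha\vee\beta$ makes the sum jump by $1$, and parity decides which of the two receives it. One checks that the floor and ceiling processes are precisely $X^\lambda$ and $X^\mu$ with
\[
\lambda=(\alpha\wedge\beta)\chi+(\alpha\vee\beta)(1-\chi),\qquad
\mu=(\alpha\wedge\beta)(1-\chi)+(\alpha\vee\beta)\chi,
\]
$\chi_t$ being the (predictable) indicator that $X^\alpha_t+X^\beta_t$ is even. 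Since $\{\lambda_t,\mu_t\}=\{\alpha_t,\beta_t\}$ for every $t$, one has $\vphi(\lambda_t)+\vphi(\mu_t)=\vphi(\alpha_t)+\vphi(\beta_t)$ pointwise for \emph{any} cost function $\vphi$---no convexity, no superposition, and no auxiliary randomization are needed. You should replace your splitting step by this construction; the rest of your argument (truncation, $\varepsilon$-optimal intensities or a final supremum, and the two directions of \eqref{eq:borell-poisson}) then goes through.
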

\begin{proof}
	By approximation we may assume
	that all
	four functions are bounded. Let $\alpha$ and $\beta$ be
	two non-negative, bounded, predictable processes.
	It follows from formula (\ref{eq:defXlambda}) that  $\lfloor ( X^\alpha + X^\beta )/2 \rfloor$
	coincides with the process $X^\lambda$, where
	\begin{equation} \label{eq_543bis}
	\lambda =  ( \alpha \wedge \beta ) \, \chi +  (\alpha \vee  \beta ) \, (1-\chi),
	\end{equation}
	and $\chi_t$ is the indicator function of the event that $X^{\alpha}_{t} +X^{\beta}_{t}$
	is even. Indeed, we see from the definition (\ref{eq:defXlambda})
	that 
	an atom of the Poisson process $N$ that lies below the graph of $\alpha \wedge \beta$ corresponds to a jump both in  $X^{\alpha}$ and in $X^{\beta}$,
	and consequently it entails a jump in $\lfloor ( X^\alpha + X^\beta )/2 \rfloor$. On the other hand, an atom of $N$ that lies between the graphs of $\alpha \wedge \beta$ and $\alpha \vee \beta$ causes a jump only if $\chi_t = 0$. This explains formula (\ref{eq_543bis}). Since $X^{\alpha}$ and $X^{\beta}$ are adapted and left-continuous in $t$, the same applies to $\chi$. Consequently $\chi$ and $\lambda$ are predictable. 	
	
	\medskip 		Similarly
	$\lceil (  X^{\alpha} +  X^{\beta} ) / 2 \rceil = X^\mu$, where
	\[
	\mu = (\alpha \wedge \beta)  \, (1-\chi) + (\alpha \vee  \beta) \, \chi .
	\]
	Note that for every $t\in [0,T]$ either $\mu_t = \alpha_t$ and $\lambda_t = \beta_t$
	or the other way around. In particular, for every function $\vphi: [0, \infty) \rightarrow \RR$,
	\[
	\vphi ( \alpha_t ) + \vphi ( \beta_t )
	= \vphi ( \lambda_t ) + \vphi ( \mu_t ) , \quad \forall t \in [0,T].
	\]
	Using the hypothesis made on $f,g,h,k$ we get that for a continuous function $\vphi$,
	\[
	\begin{split}
	f ( X^\alpha_T ) & +  g( X^\beta_T )
	- \int_0^T \vphi ( \alpha_t  )  \, dt
	- \int_0^T \vphi ( \beta_t ) \, dt \\
	& \leq
	h ( X^\lambda_T ) + k( X^\mu_T )
	- \int_0^T \vphi ( \lambda_t )  \, dt
	- \int_0^T \vphi ( \mu_t ) \, dt.
	\end{split}
	\]
	Choosing $\vphi (x) = x \log x + x -1$,
	taking expectation, and using the representation
	formula in Theorem \ref{thm:borell-poisson} for $h$ and $k$ we obtain
	\[
	\begin{split}
	\EE & \left[ f ( X^\alpha_T )  - \int_0^T \vphi ( \alpha_t  )  \, dt  \right]
	+ \EE \left[ g( X^\beta_T ) - \int_0^T \vphi ( \beta_t ) \, dt \right] \\
	& \leq \log \left( \int_\NN \e^h \, d\pi_T \right)
	+ \log \left( \int_\NN \e^k \, d\pi_T \right) .
	\end{split}
	\]
	Taking the supremum in $\alpha$ and $\beta$
	and using the representation formula for $f$ and $g$
	yields the result.
\end{proof}
Rescaling appropriately, we obtain as a corollary
a Pr\'ekopa-Leindler type inequality for the counting measure on $\mathbb Z$.
\begin{proof}[Proof of Theorem \ref{thm_603}] We may assume that all four sums in (\ref{eq_558}) are finite.
	Let $Y_n$ be a random variable having the Poisson
	law with  parameter $n$ and let $X_n = Y_n - n$.
	Applying the previous proposition to the functions
	$f,g,h,k$ (translated by $-n$) we get
	\begin{equation}\label{eq:prekopa-step}
	\EE \left[ \e^{f(X_n)} \right] \, \EE \left[ \e^{g(X_n)} \right]
	\leq \EE \left[ \e^{h(X_n)} \right] \, \EE \left[ \e^{k(X_n)} \right] .
	\end{equation}
	On the other hand, for any fixed $k\in\ZZ$,
	letting $n$ tend to $+\infty$ and using the Stirling formula we get
	\[
	\PP ( X_n = k ) = \frac { n^{n+k} }{ (n+k)! } \e^{-n} = \frac 1{\sqrt{2\pi n}} ( 1 + o (1) ).
	\]
	Hence by the dominated convergence theorem
	\[
	\sqrt{2 \pi n} \cdot \EE \left[ \e^{f(X_n)} \right] \stackrel{n \rightarrow \infty}\longrightarrow  \sum_{x\in \ZZ} \e^{f(x)},
	\]
	and similarly for $g,h,k$.
	Therefore multiplying~\eqref{eq:prekopa-step} by $n$ and letting
	$n$ tend to $+\infty$ yields the result.
\end{proof}

\section{Appendix: Proof of Lemma \ref{lem_1532}}
We write $\cB ( [0,T] ) $ for the Borel $\sigma$-field of $[0,T]$.
Let $\mu^+$ and $\mu^-$ be the measures on
$[0,T]\times \RR_+ \times \Omega$ equipped with the
$\sigma$-field $\cB([0, T]) \otimes \cB(\RR^+) \otimes \cF$
defined by
\[
\begin{split}
\mu^+ ( dt,du,d\omega ) &  = N ( \omega ) ( dt , du ) \, \PP ( d \omega ) \\
\mu^- ( dt, du , d\omega ) & = \mathcal L ( dt , du ) \,  \PP ( d\omega ) ,
\end{split}
\]
where $\mathcal L$ is the Lebesgue measure on the strip $[0, T] \times \RR_+$
while  $N(\omega)$ is the discrete measure on this strip given by the Poisson process $N$.
Let $\mathcal I$ be the $\sigma$-field on $[0,T]\times\RR_+\times\Omega$
generated by the class
\[
\mathcal J
= \{ E \times A ; \; E  \in \cB( [0,T]\times \RR_+ ) , \, A \in \mathcal F_{E^c} \}
\]
where $E^c$ denotes the complement of $E$.
We claim that $\mu^+$ and $\mu^-$ coincide on $\mathcal I$.
This is in fact the statement of Theorem~1 in~\cite{picard}, 
we recall the short proof here for completeness.
Since $\mathcal J$ is a $\pi$-system (i.e. stable by finite intersections)
and $\sigma ( \mathcal J ) = \mathcal I$, it is enough to prove
that $\mu_+$ and $\mu_-$ coincide on $\mathcal J$. On the other 
hand, if $E\times A \in \mathcal J$, the random variable $N(E)$ 
is independent of the set $A$, hence
\[
\mu^+ ( E \times A ) = \EE [ N(E ) \mathbbm 1_A ] =
\EE [ N ( E ) ] \PP (A) = \mathcal L ( E ) \PP (A) = \mu^- ( E \times A ) .
\]
Next recall the definition of the predictable $\sigma$-field $\mathcal P$
and observe that
\[
\mathcal B ( \RR_+ ) \otimes \mathcal P \subseteq \mathcal I .
\]
As a result, since $(H_t)$ and $(\lambda_t)$ are predictable,
as a function of $(t,u,\omega)$,
\[
H_t \, \mathbbm 1_{\{ u\leq \lambda_t \}}
\]
is measurable with respect to $\mathcal I$. We may therefore
integrate $H_t \, \mathbbm 1_{\{ u\leq \lambda_t \}}$
with respect to $\mu^+$ or $\mu^-$ and obtain the same outcome. In other words,
\begin{equation}
\EE \left[ \int_{ [0,T]\times \RR_+ }
H_t \, \mathbbm 1_{\{ u\leq \lambda_t \}}\, N(dt,du) \right]
= \EE \left[ \int_{ [0,T]\times \RR_+ }
H_t \, \mathbbm 1_{\{ u\leq \lambda_t \}} \, dt du  \right].
\label{eq_1557}
\end{equation}
From (\ref{eq_1557}) we obtain that
\[
\begin{split}
\EE \left[ \int_0^T H_t X^{\lambda}(dt) \right]
& = \EE \left[\int_{[0,T]\times\RR_+} H_t \mathbbm 1_{\{u\leq \lambda_t\}} \, N(dt,du) \right] \\
& = \EE \left[ \int_{[0,T]\times\RR_+} H_t \mathbbm 1_{\{u\leq \lambda_t\}} \, dt du \right] \\
& = \EE \left[ \int_0^T H_t \mathbbm \lambda_t \, dt \right],
\end{split}
\]
completing the proof of Lemma~\ref{lem_1532}.

\bigskip

\smallbreak
\noindent Department of Mathematics, Weizmann Institute of Science, Rehovot 76100 Israel, and
School of Mathematical Sciences, Tel Aviv University, Tel Aviv 69978 Israel.

\smallbreak
\hfill \verb"boaz.klartag@weizmann.ac.il"

\bigskip
\noindent
CEREMADE (UMR CNRS 7534),
Universit\'e Paris-Dauphine, 75016 Paris, France, and 
D\'epartement de Math\'ematiques et Applications (UMR CNRS 8553), \'Ecole Normale Sup\'erieure, 75005, Paris, France.

\smallbreak
\hfill \verb"lehec@ceremade.dauphine.fr"

\end{document}